\documentclass[leqno]{prims}
\usepackage{amsmath}
\def\qedhere{}

\usepackage{hyperref,dettweiler-sabbah}

\parindent0em
\parskip0.3ex

\textwidth16cm

\theoremstyle{plain}
\newtheorem{thm}{Theorem.}[section]
\newtheorem{cor}[thm]{Corollary.}
\newtheorem{lemma}[thm]{Lemma.}
\newtheorem{prop}[thm]{Proposition.}

\newtheorem{defn}[thm]{Definition.}

\newtheorem{ass}[thm]{Assumption.}
\newtheorem{rem}[thm]{Remark.}


%
%
%
%
%

\newcommand{\co}{{\omega}}
\newcommand{\ten}{{\otimes}}

\newcommand{\pr}{{\rm pr}}

\newcommand{\x}{{\bf{x} }}
\newcommand{\y}{{\bf{y} }}
\newcommand{\tstar}{{\tilde{\star} }}
\DeclareMathOperator{\Gr}{Gr}
\newcommand{\onto}{\to\hspace*{-5mm}\to}

\renewcommand{\PP}{\mathbb{P}}
\renewcommand{\ZZ}{\mathbb{Z}}

\newcommand{\V}{\mathcal{V}}

\renewcommand{\x}{{\bf x}}

\renewcommand{\MC}{{\rm MC}}

\newcommand{\RH}{\mathcal{RH}}

{

\begin{document}

\renewcommand{\theenumi}{\roman{enumi}}%

\TitleHead{On the Hodge theory of the additive middle convolution}
\title{On the Hodge theory of the additive  middle convolution}

\AuthorHead{M.~Dettweiler and S.~Reiter}
\author{Michael \textsc{Dettweiler}\footnote{M.~Dettweiler: Department of Mathematics,
University of Bayreuth,
95440 Bayreuth,
Germany;
\email{michael.dettweiler@uni-bayreuth.de}} and
Stefan \textsc{Reiter}\footnote{S.~Reiter: 
Department of Mathematics,
University of Bayreuth,
95440 Bayreuth,
Germany;
\email{stefan.reiter@uni-bayreuth.de}} 
}

\classification{14D07, 32G20, 32S40, 34M99}
\keywords{Middle convolution,  Hodge theory}
\maketitle
\begin{abstract}
We compute the behaviour of Hodge data under  additive middle convolution for  irreducible variations of polarized complex Hodge structures on  punctured complex affine lines.
\end{abstract}
%

\section*{Introduction}

In previous work of Sabbah with one of the authors \cite{DS}, the effect of the additive middle convolution $\MC_\chi(V)=V\star L_\chi$
of a complex polarized Hodge module  $V$ on $\AA^1$ with a Kummer module $ L_\chi$ 
on  various local and global Hodge data was 
determined. This leads to an analog of Katz' algorithm for irreducible rigid local systems \cite{Katz96} in the context of Hodge modules.

It is the aim of this work to extend these results to the case of the middle convolution $V\star L$ 
(cf.~Section~\ref{secprel}) of 
two irreducible and non-constant complex polarized Hodge modules on $\AA^1.$ It turns out that,  to a large extent, 
the general case can be reduced to the middle convolution with  Kummer modules treated in \cite{DS}. \\

In Section~\ref{sectiondegrees}, Theorem~\ref{H-tensor},  the global Hodge numbers of tensor products $V\otimes L$
 (the degrees of the associated Hodge bundles)
are determined, generalizing \cite{DS}, Prop.~2.3.2. We are indepted to Claude Sabbah for communicating the proof of Theorem~\ref{H-tensor} to us.
This result is important in many applications where convolution is applied 
iteratively in combination with tensor operations (cf.~\cite{Katz96}, \cite{DS}, \cite{DR14}).  \\

In Section~\ref{seclocdata} we determine the local Hodge data  of the vanishing and 
nearby cycles (cf.~Section~\ref{secprel} and \cite{DS} for these notions) 
at the finite singularities of a convolution $V\star L$ (Theorem~\ref{thmthomseb}).  As in \cite{DS}, the main tool for doing this  
 is Saito's version of the Thom-Sebastiani theorem (cf.~\cite{DS}, Theorem~3.2.3, and  its corrigendum~\cite{DSCorr}, where a proof of
 the Thom-Sebastiani result is provided).\\

In Section~\ref{secglobal}  the global Hodge numbers of $V\star L$  are determined. 
The main observation is that the middle convolution $V\star L_\chi$ of an irreducible and nontrivial  Hodge module $V$ 
with a generic Kummer module $L_\chi$ is {\it parabolically rigid}, meaning that the associated parabolic cohomology group
$$H^1_{\rm par}(V\star L_\chi)=H^1(\PP^1, j_*\mathcal{H}^{-1}(\RH(V\star L_\chi)))$$ vanishes (where $\RH(V\star L_\chi)$ is the perverse sheaf associated to 
$V\star L_\chi$ via Riemann-Hilbert correspondence and $j$ is the projective embedding  of $\AA^1$). 
Using the Riemann-Roch theorem, a formula for the Hodge numbers of $H^1_{\rm par}(V\star L_\chi)$ involving 
local and global data was given in \cite{DS}, Proposition~2.3.3. 
Hence the vanishing of $H^1_{\rm par}(V\star L_\chi)$
gives a method to compute the global Hodge numbers of $V\star L.$\\

The remaining local Hodge data at $\infty$ of $V\star L$ are determined in Section~\ref{seclocalinfty}.  For this, we make use of hypergeometric Hodge modules with prescribed local behaviour at $\infty$ and reduce the general case to the 
convolution of these. We believe that a more conceptual proof of these results may be given in the context of 
irregular Hodge filtrations on twistor modules and their behaviour under Fourier-Laplace transformation (cf.~\cite{ESJ}). \\

In a forthcoming work, the authors prove similar results for the multiplicative convolution (also called Hadamard product). \\

\section{Preliminary results}\label{secprel}

Following \cite{DS}, we review the basic notions of middle convolutions introduced by Katz \cite{Katz96}, in the frame of holonomic $\cD$-modules.
Let
$s:\AA^1\times \AA^1 \to \AA^1$ be the addition map and  let $M,N$ be holonomic $\cD(\AA^1)$-modules. The {\it additive  $*$-convolution} $M\star_*N$ of $M$ and $N$ is the object $s_+(M\boxtimes N)$ of $\catD^\rb(\AA^1)$. 
The {\it additive    $!$-convolution} can be defined as $M\star_!N=\bD(\bD M\star_*\bD N)$, where $\bD$ is the duality functor $\catD^{\rb,\op}_{\hol}(\cD(\AA^1))\to \catD^\rb_{\hol}(\cD(\AA^1))$. It can also expressed as $s_\dag(M\boxtimes N)$, if~$s_\dag:=\bD s_+\bD$ denotes the adjoint by duality of $s_+,$ cf.~\cite{DS} (under the Riemann Hilbert correspondence, the functor $+$ corresponds to 
the derived $*$-functor  and $\dag$ corresponds to $!,$ explaining the notion).

Let us choose a projectivization $\wt s:X\to \AA^1$ of $s$, and let $j: \AA^1 \times \AA^1 \hto\nobreak X$ denote the open inclusion. Since $\wt s$ naturally commutes with duality, we have \hbox{$\wt s_\dag=\wt s_+$} and $s_\dag=\wt s_+\circ j_\dag$. Since there is a natural morphism $j_\dag\to j_+$ in $\catD^\rb_\hol(\cD_X)$, we get a functorial morphism $s_\dag(M\boxtimes N)\to s_+(M\boxtimes N)$, that is, $M\star_!N\to M\star_*N$, in $\catD^\rb_\hol(\cD(\AA^1))$.
Let  $\catP$ be  the  full subcategory  of $\Mod_\hol(\AA^1)$ consisting of
holonomic $\cD(\AA^1)$-modules  $N$ 
such that for all holonomic $\cD(\AA^1)$-modules  $M$ 
both types of convolutions $N\star_*M$ and $N\star_! M$ are again holonomic. 

\begin{definition}\label{defconv}
For $N$ in $\catP$ and $M$ holonomic, the {\it middle convolution} $M\star_\Mid N$ is defined as the image of $M\star_!N\to M\star_*N$ in $\Mod_\hol(\cD(\AA^1))$. For simplicity we often set
$M\star N:=M\star_\Mid N.$ As explained in \cite{DS}, Section~3.3, this notion extends to 
the category of complex polarized Hodge modules on $\AA^1,$ using the  results of 
\cite{MSaito86} and \cite{MSaito87}. If $M$ is smooth on $\AA^1\setminus \x\, (\x=\{x_1,\ldots,x_r\} \cup \{\infty\} )$ and if $N$
is smooth on $\AA^1\setminus \y\, (\y=\{y_1,\ldots,y_s\}\cup \{\infty\})$ then $M\star N$ as well as 
the other types of convolutions are smooth on $\AA^1\setminus \x\star \y,$ where 
$$ \x\star \y =\{x_i+y_j\mid i=1,\ldots,r,\, j=1,\ldots ,s\} \cup \{\infty\}.$$ 
\end{definition}

The following result follows from the Riemann-Hilbert correspondence and \cite{Katz96}, Cor.~2.6.10 and Cor.~2.6.17:

\begin{lemma}\label{lem:starexact} \begin{enumerate}
\item If $N$ is irreducible  
such that its isomorphism class is not translation invariant then $N$ has the property~$\catP$.
\item If $N$ and $M$ are in  $\catP$ then $N\star M$ again is in $\catP$.
\end{enumerate}
\end{lemma}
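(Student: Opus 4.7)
The plan is to translate both statements into the perverse-sheaf framework via the Riemann--Hilbert correspondence and then invoke the cited Corollaries of \cite{Katz96} directly.

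In more detail, the Riemann--Hilbert functor $\RH$ furnishes an equivalence between $\catD^\rb_\hol(\cD(\AA^1))$ and the bounded derived category of complex algebraically constructible sheaves on $\AA^1$, matching holonomic $\cD$-modules with perverse sheaves and intertwining $s_+\leftrightarrow Rs_*$, $s_\dag\leftrightarrow Rs_!$, and $\bD$ with Verdier duality. Consequently our two convolutions $\star_*$ and $\star_!$, together with the natural transformation $M\star_!N\to M\star_*N$, correspond exactly to Katz's constructions on perverse sheaves. The property defining $\catP$ therefore translates verbatim, and middle convolution (as the image of $\star_!\to\star_*$) on either side is carried to its counterpart on the other.

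Under this translation, (i) becomes the statement that a simple perverse sheaf on $\AA^1$ whose isomorphism class is not invariant under any nonzero translation has the analogous property; this is \cite[Cor.~2.6.10]{Katz96}, since irreducibility of the $\cD$-module is equivalent to simplicity of the associated perverse sheaf and translation invariance of isomorphism classes transports under $\RH$ (both $s$ and the translation action being algebraic). Likewise, (ii) is exactly \cite[Cor.~2.6.17]{Katz96}, which asserts stability of the corresponding category under middle convolution.

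The only non-automatic point is the compatibility of the $+/\dag$ conventions on holonomic $\cD$-modules with the $*/!$ conventions on constructible complexes, including the identification of the natural transformation $j_\dag\to j_+$ on either side. This is part of the standard Riemann--Hilbert dictionary, so no genuine obstacle arises; the proof reduces to verifying the dictionary and citing Katz.
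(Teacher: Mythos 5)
Your proposal is correct and coincides with the paper's own treatment: the authors likewise derive both parts by passing through the Riemann--Hilbert correspondence (matching $\star_!$, $\star_*$ and the natural map between them with Katz's constructions on perverse sheaves) and then citing \cite{Katz96}, Cor.~2.6.10 for (i) and Cor.~2.6.17 for (ii). Your additional remarks on the $+/\dag$ versus $*/!$ dictionary simply make explicit what the paper leaves implicit.
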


Let $W$ be a  complex polarized Hodge module on the complex affine line $\AA^1$ which is smooth on 
 $\AA^1\setminus \{z_1,\ldots,z_k\}.$ The local system on $\AA^1\setminus \{z_1,\ldots,z_k\}$ 
 which is underlying $W$ is denoted as $\mathcal{W}.$ The perverse sheaf on $\AA^1$ 
 associated to $W$ via the de Rham functor is denoted by $\RH(W)$ 
  and we view the {\it $i$-th parabolic cohomology group}
 $$ H^i_{\rm par}(W):=H^i(\PP^1,j_*\mathcal{H}^{-1}(\RH(W)))\quad (j: \AA^1 \hookrightarrow  \PP^1) $$ 
 to be equipped with its natural Hodge structure.\\

Throughout the article we will work with Hodge modules $V,L,\delta_x,L_\chi$ which are as follows: 

 \begin{ass}\label{ass1} \begin{enumerate} 
 \item We assume that $V=(V,F^\cbbullet V)$ is a
  complex polarized Hodge module on the complex affine line $\AA^1$ which is 
 the intermediate (minimal)  extension of 
 an irreducible nonconstant  variation of polarized complex Hodge structures 
  on $\AA^1\setminus\x$  
 (where $\x=\{x_1,\ldots,x_r,\infty\}\subset \PP^1(\CC)).$    In this situation we sometimes set $x_{r+1}=\infty.$  
 
 
\item  Let  $L=(L,F^\cbbullet L)$ be  another Hodge module of the same kind 
 which is the minimal extension of 
 a   variation of polarized complex Hodge structures 
  on $\AA^1\setminus \y$ 
 (where $\y=\{y_1,\ldots,y_s,\infty\}).$ 
 
\item  For a point $x\in \AA^1$ we write $\delta_x$ for the Hodge module which corresponds to the rank-one skyscraper sheaf on $\AA^1$ supported in $x,$ having     trivial Hodge filtration (so that $h^0(H^0(\AA^1,\delta_x))=1$).

\item As in \cite{DS}, Section~3.3, we  write $L_\chi$ for the Hodge module with trivial 
Hodge filtration belonging 
to the Kummer sheaf with residues $(\mu,1-\mu)\, (\mu \in (0,1))$ such that $\chi=e^{-2\pi i \mu},$ having singular points 
  at $(0,\infty).$ 
  We call $L_\chi$ {\rm generic}  if the monodromy eigenvalues 
 of all sheaves different from $L_\chi$ and $L_{\chi^{-1}}$ involved in 
our arguments  are different from $\chi^{\pm 1}.$ 
\end{enumerate}
\end{ass}

For the following notions and stated results we refer to \cite{DS}, Section~1.2 and Sections~2.2, 2.3: on the one hand, one has   {\it global Hodge data} $\delta^p(V)$
given by the degrees of the Hodge bundles.  
On the other hand, one has {\it local Hodge data}:  For each point $x\in \{x_1,\ldots,x_{r+1}\}$ and each $\lambda\in S^1$ one has the notion $\psi_{x,\lambda}(V)$
of the 
generalized $\lambda$-eigenspace of the 
nearby cycles $\psi_{x}(V).$   We will also use the corresponding notion of 
vanishing cycles $\varphi_{x,\lambda}(V).$  These spaces are mixed Hodge structures  with associated nilpotent monodromy operator, derived from the local monodromy,
which imposes an associated weight filtration $W.$ One has the notion of $l$-primitive vectors
${\rm P}_l \varphi_{x,\lambda}(V)$ with respect to the Lefschetz decomposition of 
$\varphi_{x,\lambda}(V).$ 
For $l\in \NN$ we define 
$l$-primitive local Hodge numbers as follows: 
$$ \nu_{x,\lambda,l}^p(V)=\nu_{x,a,l}^p(V):=\dim \gr^p_F {\rm P}_l\psi_{x,\lambda}(V),$$ 
where $a\in \RR\cap [0,1)$ 
such that  $\lambda=\exp(2 \pi i (-a)).$ We set 
$$ \nu_{x,a}^p(V):=\sum_{l\geq 0}\sum_{k=0}^l\nu_{x,a,l}^{p+k}(V)\quad \textrm{ and }\quad 
\nu_{x,a,{\rm prim}}^p:=\sum_{l\geq 0}\nu_{x,a,l}^p(V) $$
as well as
$$ h^p(V):=\nu_{x}^p(V):=\sum_{a\in [0,1)}\nu_{x,a}^p(V)\quad \textrm{ and }\quad 
 \nu_{x,\neq 0}^p(V):=\sum_{a\in (0,1)}\nu_{x,a}^p(V).$$

One has corresponding notions for vanishing cycles 
$$ \mu_{x,\lambda,l}^p(V)=\mu_{x,a,l}^p(V):=\dim \gr^p_F {\rm P}_l\varphi_{x,\lambda}(V),$$ 
and
$$ \mu_{x,a}^p(V):=\sum_{l\geq 0}\sum_{k=0}^l\mu_{x,a,l}^{p+k}(V)\quad \textrm{ and }\quad 
\mu_{x,a,{\rm prim}}^p:=\sum_{l\geq 0}\mu_{x,a,l}^p(V). $$ These notions are related as follows
(cf.~loc.cit.):
$$ \mu_{x,a,l}^p(V)=\nu_{x,a,l}^p(V) \textrm{ if } a\neq 0\quad \textrm{ and }\quad 
\mu_{x,0,l}^p(V)=\nu_{x,0,l+1}^p(V).$$ 

Additionally to \cite{DS}, we will use the following further local Hodge numbers, simplifying the computations
below: 

\begin{defn} \label{def1} 
Let
$$
  \co^p_{x}(V) :=\nu^p_{x}(V)-\nu^p_{x,0,\prim}(V) =\nu_{x,\neq 0}^{p}(V)+\mu_{x,0}^{p+1}(V)
  ,$$   cf.~\cite{DS},~(2.2.5*), and 
   \begin{eqnarray*}  
      \co^p_{ss,x}(V)&:=&\nu^p_{x,\neq 0}(V)\\
      \co^p_{u,x}(V)&:=&\mu^{p+1}_{x,0}(V) \\
   \co^p_{\neq \infty }(V) &:=&\sum_{x\in (\x\setminus \infty)}\co^p_{x}(V) \\
   \co_{\neq \infty}(V)&:=&\sum_p \co^p_{\neq \infty}(V) \\
    \co^p_{}(V) &:=&\sum_{x\in \x } \co^p_{x}(V) \\
     \co(V)&:=&\sum_p \co^p(V)\\ 
      \kappa^p_{x}(V) &:=& \nu^{p}_{x,0,\prim}(V).    
 \end{eqnarray*}
%
 
Let $J^p(a,l)(V)$ denote a mixed $\CC$-Hodge structure 
which is associated to  a nilpotent orbit belonging to a monodromy operator whose Jordan form is a 
single Jordan block of size $l$ and having residue $a\in [0,1)$ such that 
$\nu^p_{a,l-1}(V)=1.$ 
\end{defn}

\begin{rem}\label{rempsi1} 
One has $$  \psi_{x_j}(V) \simeq \bigoplus_{(i,a,l)}   J^i(a,l)^{\nu^i_{x_j,a,l-1}(V)}$$
(note that we use complex coefficients, so any pure Hodge structure decomposes into one-dimensional summands). 
\end{rem}

In the following, let $j: \AA^1\setminus \x \hookrightarrow \PP^1$ be the natural inclusion. 
Using our above notion of $ \co^i_{}(V)$ we obtain:

\begin{prop}\label{H-para}
 \[ h^p(H^1_{\rm par} (V))=\delta^{p-1}(V)-\delta^p(V)-h^p(V)-h^{p-1}(V)+\co^{p-1}(V). \]\end{prop}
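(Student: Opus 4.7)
The plan is to apply a Riemann--Roch computation on $\PP^1$ to the filtered de Rham complex of the intermediate extension $j_{!*}V$, and then rewrite the answer in the $\co^p$-notation of Definition~\ref{def1}; this amounts to translating \cite{DS}, Prop.~2.3.3, into the present setup. Concretely, by Saito's strict compatibility of the Hodge filtration with direct images along the projective morphism $\PP^1\to\mathrm{pt}$ one obtains
$$
\gr^p_F H^1_{\rm par}(V)\ \simeq\ \mathbb{H}^1\bigl(\PP^1,\ \gr^p_F\,\mathrm{DR}(j_{!*}V)\bigr),
$$
with $\gr^p_F\,\mathrm{DR}(j_{!*}V)$ the two-term complex $\bigl[\gr^p_F(j_{!*}V)\xrightarrow{\nabla}\gr^{p-1}_F(j_{!*}V)\otimes\Omega^1_{\PP^1}\bigr]$ in cohomological degrees $0$ and $1$. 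Irreducibility and non-constancy of $V$ on $\AA^1\setminus\x$ force $H^0_{\rm par}(V)=H^2_{\rm par}(V)=0$ and therefore also the vanishing of their Hodge graded pieces, so that $-h^p(H^1_{\rm par}(V))$ equals the Euler characteristic of the above two-term complex on $\PP^1$.

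I would then invoke Riemann--Roch on $\PP^1$ in the form $\chi(\mathcal{E})=\deg(\mathcal{E})+\mathrm{rk}(\mathcal{E})$, together with $\deg(\Omega^1_{\PP^1})=-2$, so that the computation reduces to determining the degrees and generic ranks of the Hodge bundles $\gr^p_F(j_{!*}V)$. The generic rank of $\gr^p_F(j_{!*}V)$ equals $h^p(V)$, while the degree differs from $\delta^p(V)$ only by local contributions supported at the punctures $x_j\in\x$.

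The main obstacle is the local analysis near each puncture. There one must compare the Hodge bundle of the intermediate extension with Deligne's upper and lower canonical extensions, the discrepancy being controlled by the limit mixed Hodge structure on $\psi_{x_j}(V)$. Using Schmid's nilpotent orbit theorem (equivalently, the V-filtration of $j_{!*}V$ at $x_j$), I would identify the local defect in the degree of $\gr^{p-1}_F(j_{!*}V)$ at $x_j$ as the sum of two pieces---one from the non-unit-eigenvalue part of $\psi_{x_j}(V)$ and one from the non-primitive part of the unipotent Jordan blocks---yielding precisely $\co^{p-1}_{x_j}(V)=\nu^{p-1}_{x_j,\neq 0}(V)+\mu^{p}_{x_j,0}(V)$, cf.~formula (2.2.5*) of \cite{DS} and Definition~\ref{def1}. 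Summing over $x_j\in\x$ and substituting into the Euler-characteristic identity produces the stated formula
$$
h^p(H^1_{\rm par}(V))=\delta^{p-1}(V)-\delta^p(V)-h^p(V)-h^{p-1}(V)+\co^{p-1}(V).
$$
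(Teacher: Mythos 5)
Your proposal is correct and follows essentially the same route as the paper: the paper simply quotes the Riemann--Roch formula of \cite{DS}, Proposition~2.3.3 (whose proof is exactly the filtered de Rham/Euler-characteristic computation you sketch), and then rewrites it using $\nu^{p-1}_{\infty,0,{\rm prim}}(V)=h^{p-1}(V)-\co^{p-1}_\infty(V)$ and Definition~\ref{def1}, which is the same final bookkeeping you perform. The only difference is that you re-derive the cited input rather than invoking it.
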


\begin{proof}
By \cite{DS}, Proposition 2.3.3, we have 
 \begin{eqnarray*} h^p(H^1_{ \rm par }(V))&=&\delta^{p-1}(V)-\delta^p(V)-h^p(V)-\nu_{\infty,0,{\rm prim}}^{p-1}(V)+\sum_{j=1}^r(\nu_{x_j,\neq 0}^{p-1}(V)
 +\mu_{x_j,0}^{p }(V)) \\
 &=&\delta^{p-1}(V)-\delta^p(V)-h^p(V)-\nu_{\infty,0,{\rm prim}}^{p-1}(V)+\sum_{j=1}^r \co^{p-1}_{x_j}(V) \\
 &=& \delta^{p-1}(V)-\delta^p(V)-h^p(V)-h^{p-1}(V)+\co^{p-1}(V),
 \end{eqnarray*}
 using $$ \nu_{\infty,0,{\rm prim}}^{p-1}(V)=\nu_\infty^{p-1}(V)-\co_\infty^{p-1}(V)\quad \textrm{ and }
 \quad h^{p-1}(V)=\nu_\infty^{p-1}(V).$$
\end{proof}

\begin{rem}\label{remdelta} The construction of nearby and vanishing cycles and their basic invariants is 
carried out for minimal extensions in \cite{DS}, Section~2.2.  
The general case can be reduced with this at hand to the case of mixed Hodge-modules with punctual support.
The Hodge invariants of these are as follows:  Let $V$ be a Hodge module supported on a closed point 
$x\stackrel{i}{\hookrightarrow} \AA^1$ (i.e., a complex polarized Hodge structure $V$ placed at $x$) and let
$i_+V$ its extension to $\AA^1.$  By the usual triangle which connects nearby and vanishing cycles, 
 the nearby cycles of $i_+V$ are 
zero, while the vanishing cycles 
$\varphi_{x}(i_+V)$  can be identified with $V.$
 Note that the natural monodromy operation on $\varphi_{x}(i_+V)$ is trivial, 
hence $ \mu^p_{x,a}(i_+V)=0$ for $a\neq 0$ 
and 
$ \mu^p_{x,0}(i_+V)=h^p(V).$
\end{rem}

\section{Degrees of tensor products}\label{sectiondegrees}

We will proceed using the notions of the previous section.
 The following theorem is a  generalization of \cite{DS}, Proposition~2.3.2. We are indepted to 
 Claude Sabbah for communicating its proof to us.
  
\begin{thm}\label{H-tensor}
 \begin{eqnarray*} 
  \delta^l(V \ten L)&= & \sum_{p} \delta^{l-p}(V) h^p(L) +  \sum_{p} h^{l-p}(V) \delta^p(L) 
   +\sum_{x \in \x \cap \y }o^l_{x}(V \ten L), 
 \end{eqnarray*}
 where
\begin{eqnarray*} 
  o^l_{x}(V \ten L) &:=&  \sum_{p} \sum_{a+b\geq 1} \nu^p_{x,a}(V)\nu^{l-p}_{x,b}(L).
 \end{eqnarray*}

\end{thm}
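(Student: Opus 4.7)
The plan is to realize $\delta^l(V\otimes L)$ as the degree of the $l$-th Hodge-graded piece of the canonical (Deligne) extension $(V\otimes L)^{\mathrm{can}}$ to $\PP^1$, i.e., the one whose local monodromy residues lie in $[0,1)$, and to compare it with the naive tensor extension $V^{\mathrm{can}}\otimes L^{\mathrm{can}}$. The discrepancy will be concentrated in a skyscraper sheaf at each common singularity $x\in\x\cap\y$, and a local eigenvalue bookkeeping will produce exactly the correction $o^l_x(V\otimes L)$.

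Over the open complement of $\x\cup\y$ both $V$ and $L$ are smooth variations of polarized complex Hodge structure, and the Hodge filtration of the tensor product satisfies $F^l(V\otimes L)=\sum_p F^{l-p}V\otimes F^pL$, giving $\gr^l_F(V\otimes L)=\bigoplus_p\gr^{l-p}_F V\otimes\gr^p_F L$; the same identity propagates to the tensor extension. Using bilinearity of $c_1$ on tensor products of vector bundles of constant rank together with $\mathrm{rk}\,\gr^p_F V=h^p(V)$, taking degrees on $\PP^1$ yields
\begin{equation*}
\deg\gr^l_F\bigl(V^{\mathrm{can}}\otimes L^{\mathrm{can}}\bigr)=\sum_p\bigl[\delta^{l-p}(V)\,h^p(L)+h^{l-p}(V)\,\delta^p(L)\bigr],
\end{equation*}
which is the first two sums in the asserted formula.

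At each $x\in\x\cap\y$ I would then compare the two lattices locally. Writing the generalized monodromy-eigenspace decompositions $V^{\mathrm{can}}_x=\bigoplus_{\alpha\in[0,1)}V^\alpha_x$ and $L^{\mathrm{can}}_x=\bigoplus_{\beta\in[0,1)}L^\beta_x$, the summand $V^\alpha_x\otimes L^\beta_x$ carries residue $\alpha+\beta\in[0,2)$. When $\alpha+\beta<1$ it already sits in $(V\otimes L)^{\mathrm{can}}_x$; when $\alpha+\beta\geq 1$ one must rescale by $t^{-1}$ (with $t$ a local coordinate at $x$) to bring the residue back to $[0,1)$. Consequently the natural inclusion $V^{\mathrm{can}}_x\otimes L^{\mathrm{can}}_x\hookrightarrow(V\otimes L)^{\mathrm{can}}_x$ has cokernel a skyscraper sheaf at $x$ whose filtered $\CC$-fiber is $\bigoplus_{\alpha+\beta\geq 1}(V^\alpha_x/tV^\alpha_x)\otimes(L^\beta_x/tL^\beta_x)$.

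For $\alpha>0$ the standard identification $\gr^p_F V^\alpha_x/t\simeq \gr^p_F\psi_{x,\exp(-2\pi i\alpha)}(V)$ has dimension $\nu^p_{x,\alpha}(V)$; since every summand in the skyscraper has strictly positive $\alpha,\beta$, the filtered dimension of the skyscraper quotient in Hodge degree $l$ is exactly
\begin{equation*}
\sum_p\sum_{\alpha+\beta\geq 1}\nu^p_{x,\alpha}(V)\,\nu^{l-p}_{x,\beta}(L)=o^l_x(V\otimes L).
\end{equation*}
Summing these local contributions over $x\in\x\cap\y$ and adding to $\deg\gr^l_F(V^{\mathrm{can}}\otimes L^{\mathrm{can}})$ gives the claim. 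The main obstacle is to justify these local compatibilities with full rigor: the Hodge filtration on the canonical extension must split along monodromy eigenspaces and be multiplicative under tensor products, and multiplication by $t^{-1}$ must be a Hodge-filtered isomorphism on the bad eigenblocks so that no spurious Hodge shift appears. Both facts ultimately rest on Schmid's nilpotent-orbit theorem and the functoriality of the limit mixed Hodge structure under tensor products.
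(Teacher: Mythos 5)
Your overall strategy is the same as the paper's: compare the naive tensor lattice $V^0\otimes L^0$ with the Deligne extension $(V\otimes L)^0$, read off the first two sums from the degree of the graded tensor bundle, and identify the correction term with the $F$-graded dimensions of the skyscraper cokernel supported on $\x\cap\y$, whose fibre is $\bigoplus_{\alpha+\beta\geq1}\gr^\alpha V\otimes\gr^\beta L$. The numerical bookkeeping is right, including the observation that only $\alpha,\beta>0$ can occur in the cokernel.

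However, the step you defer as ``the main obstacle'' is precisely the mathematical content of the theorem, and the mechanism you propose for it is not quite the right one. You ask that the Hodge filtration on the canonical extension split along the monodromy eigenspace decomposition; no such $F$-splitting holds in general, and it is not what is needed. What must actually be proved is (a) that the inclusion $V^0\otimes L^0\subset(V\otimes L)^0$ is \emph{strict} for the Hodge filtrations, so that the graded cokernel genuinely computes the defect degree by degree, and (b) that the filtration induced by $F^\cbbullet(V\otimes L)^0$ on the quotient coincides with the tensor-product filtration on $\bigoplus_{\alpha+\beta\geq1}\gr^\alpha V\otimes\gr^\beta L$, so that its graded dimensions are the products $\nu^p_{x,\alpha}(V)\,\nu^{l-p}_{x,\beta}(L)$ rather than something larger or smaller. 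The paper establishes both points by showing that the relevant maps ($\Gr^0V\otimes\Gr^0L\to\Gr^0(V\otimes L)$ and $\Gr^1(V\otimes L)\to\Gr^0V\otimes\Gr^0L$) become morphisms of mixed Hodge structures after grading by the $V$-filtration, hence are $F$-strict, and then deduces strictness of the global inclusion via completion along $t$ and faithful flatness of $\wh\cO$ over $\cO$. Without some version of this argument your local dimension count is only heuristic: the induced and tensor-product filtrations on the cokernel could a priori differ, and the equality of $\delta^l(V\otimes L)$ with the stated expression would only be an inequality.
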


The result depends on the following two lemmata.
Let $V^0,L^0,(V\otimes L)^0$ denote the Deligne extensions of $V,L,V\otimes L$ (resp.). 
There is also $(V\otimes L)^0$. We have the following Hodge filtrations:
\begin{itemize}
\item
The tensor product filtration $F^\ell(V^0\otimes L^0):=\sum_pF^{\ell-p}V^0\otimes F^pL^0$.
\item
Since $V\otimes L$ is a variation of Hodge structures  on the punctured $\PP^1$, with Hodge filtration equal to the tensor product filtration, we obtain the filtration $F^\ell(V\otimes L)^0$.
\end{itemize}

Let $D=\bmx\cup\bmy$ denote the reduced divisor away from which $V$ and $L$ are variations of Hodge structures. A local computation (without using Hodge theory) shows that there are $F$-filtered inclusions
\[
(V\otimes L)^0(-D)\subset V^0\otimes L^0\subset(V\otimes L)^0
\]
which are equalities away from $D$.

\begin{lemma}\label{lem:strictincl}
The inclusion $V^0\otimes L^0\subset(V\otimes L)^0$ is strict with respect to $F^\cbbullet$.
\end{lemma}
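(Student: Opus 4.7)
My plan is to reduce strictness to the injectivity of a natural map of coherent sheaves, and then to invoke a standard torsion-free argument. Recall that strictness of the inclusion $V^0\otimes L^0 \subset (V\otimes L)^0$ with respect to $F^\cbbullet$ is equivalent to the injectivity, for every $p$, of the induced map on quotients
\[
\Phi^p : V^0 \otimes L^0 / F^p(V^0 \otimes L^0) \longrightarrow (V \otimes L)^0 / F^p (V \otimes L)^0.
\]
Away from $D$ the map $\Phi^p$ is automatically an isomorphism, since both the underlying modules and their Hodge filtrations coincide with those of the variation $V\otimes L$ there.

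Next, I would check that both the source and the target of $\Phi^p$ are locally free (hence torsion-free) coherent sheaves. For the target, this relies on the standard fact that for a polarized VHS with quasi-unipotent monodromy, each Hodge piece $F^p$ extends to a locally free sub-$\OO$-module of the Deligne canonical extension with locally free quotient, a consequence of Schmid's nilpotent orbit theorem. For the source, working locally one can split the filtrations $F^\cbbullet V^0$ and $F^\cbbullet L^0$ as direct sums of their graded pieces; under such a splitting the tensor product filtration $F^\ell(V^0\otimes L^0) = \sum_p F^{\ell-p} V^0 \otimes F^p L^0$ becomes a visible direct summand of $V^0\otimes L^0$, with locally free quotient.

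With both source and target of $\Phi^p$ locally free and with $\Phi^p$ a generic isomorphism, its kernel is a coherent subsheaf of a torsion-free sheaf supported on $D$, and therefore vanishes. This yields the desired injectivity of $\Phi^p$ for every $p$, and hence strictness of the inclusion.

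The main technical input is the local freeness of the Hodge bundle $F^p(V\otimes L)^0$ with locally free quotient in $(V\otimes L)^0$; once this is granted, the remainder is essentially a formal argument about coherent sheaves that are generically isomorphic. I would expect no difficulty in the second ingredient, the local splitting of the filtrations on $V^0$ and $L^0$, since this is purely a statement about filtered vector bundles.
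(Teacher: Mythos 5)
Your argument is correct, but it establishes strictness by a genuinely different mechanism than the paper. You translate strictness into the injectivity of the induced map $\Phi^p$ on the quotients by $F^p$, note that its kernel $\bigl((V^0\otimes L^0)\cap F^p(V\otimes L)^0\bigr)/F^p(V^0\otimes L^0)$ is supported on $D$, and kill it by showing that the source $(V^0\otimes L^0)/F^p(V^0\otimes L^0)$ is locally free, hence torsion-free. That reduction and the torsion argument are sound. Two small remarks on where the Hodge theory actually enters: for the kernel to vanish only the torsion-freeness of the \emph{source} is used, so the essential input is the subbundle property of $F^pV^0\subset V^0$ and $F^pL^0\subset L^0$ (needed to split the filtrations locally and exhibit $F^\ell(V^0\otimes L^0)$ as a direct summand); this is again Schmid's nilpotent orbit theorem rather than a purely formal statement about filtered bundles, whereas the local freeness of $F^p(V\otimes L)^0$, which you single out as the main input, is not actually needed for the argument. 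The paper proceeds quite differently: it works on $\Gr^0V=V^0/tV^0$ with its induced $F$-filtration and the filtration $E^\cbbullet$ coming from the $V$-filtration, observes that the graded map $\bigoplus_{\alpha,\beta}\gr^\alpha V\otimes\gr^\beta L\to\bigoplus_\gamma\gr^\gamma(V\otimes L)$ is a morphism of mixed Hodge structures and hence automatically $F$-strict, deduces strictness of $\Gr^0V\otimes\Gr^0L\to\Gr^0(V\otimes L)$ and of the maps modulo $t^k$ for all $k$, and then descends from the $t$-adic completion by faithful flatness of $\wh\OO$ over $\OO$. Your route is shorter for the strictness statement in isolation, at the price of invoking the extension of Hodge bundles as a black box; the paper's route has the advantage of setting up exactly the exact sequences and graded objects $\gr^\alpha V\otimes\gr^\beta L$ that are then reused in Lemma~\ref{lem:strictincl2} to compute the dimension of the cokernel.
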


If this lemma is proved, we find that, for each $\ell$, there is an injective morphism
\[
\bigoplus_p\gr^{\ell-p}_FV^0\otimes\gr^pL^0\hto\gr^\ell_F(V\otimes L)^0
\]
whose cokernel is supported on $D$ and has dimension $\dim\gr^\ell_F\bigl((V\otimes L)^0/V^0\otimes L^0\bigr)$. As a consequence, we find
\[
\delta^\ell(V\otimes L)=\sum_p(\delta^{\ell-p}V\cdot h^pL+h^{p-\ell}V\cdot\delta^pL)+\dim \gr^\ell_F\Bigl(\frac{(V\otimes L)^0}{V^0\otimes L^0}\Bigr).
\]

\begin{lemma}\label{lem:strictincl2}
We have
\[
\dim\gr^\ell_F\Bigl(\frac{(V\otimes L)^0}{V^0\otimes L^0}\Bigr)=\sum_{x\in D}o_x^\ell(V\otimes L).
\]
\end{lemma}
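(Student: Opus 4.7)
The quotient $(V\otimes L)^0/(V^0\otimes L^0)$ is supported on $D$ by the inclusions appearing in the setup of Lemma~\ref{lem:strictincl}, so the claim is local at each $x \in D$. I will show that at a fixed such $x$, with local coordinate $t$, the local contribution to $\dim\gr^\ell_F$ equals $o^\ell_x(V\otimes L)$, and then sum.

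Locally at $x$, decompose $V^0 = \bigoplus_{a\in[0,1)} V^0_a$ according to generalized monodromy eigenvalues, where $V^0_a$ has monodromy eigenvalue $\exp(-2\pi i a)$, and similarly $L^0 = \bigoplus_b L^0_b$. A local frame of $V^0_a$ can be chosen of the form $t^a \exp(-N_a \log t / 2\pi i)\, e_i$ with $(e_i)$ a basis of $\psi_{x,a}(V)$ and $N_a$ the nilpotent part of the monodromy, and analogously for $L^0_b$. Thus $V^0_a\otimes L^0_b$ is locally generated by $t^{a+b}$ times a unipotent factor times $e_i\otimes f_j$, while the corresponding eigenspace component of $(V\otimes L)^0$ is generated by the same expression with exponent replaced by the fractional part $\{a+b\}\in[0,1)$. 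When $a+b<1$ these two lattices coincide and contribute nothing; when $a+b\geq 1$ one has $\{a+b\}=a+b-1$, so $V^0_a\otimes L^0_b = t\cdot (V\otimes L)^0_{a+b}$, and the local quotient is a skyscraper on $x$ generated by the images of $t^{a+b-1}(e_i\otimes f_j)$ for pairs with $a+b\geq 1$.

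For the Hodge filtration, I invoke the standard compatibility between the filtered Deligne extension and the Hodge filtration on nearby cycles: $F^\cbbullet V^0$ restricts on the fiber at $x$ to the Hodge filtration of $\psi_x(V)=\bigoplus_a\psi_{x,a}(V)$, giving $\dim\gr^p_F V^0|_x=\sum_a\nu^p_{x,a}(V)$, and similarly for $L$ and $V\otimes L$. Under the tensor product filtration, the class of $t^{a+b-1}(e_i\otimes f_j)$ lies in $F^\ell(V\otimes L)^0$ precisely when there exists $p$ with $e_i\in F^pV$ and $f_j\in F^{\ell-p}L$. Counting dimensions on each graded piece yields
\[
\dim\gr^\ell_F\bigl((V\otimes L)^0/(V^0\otimes L^0)\bigr)_x=\sum_p\sum_{a+b\geq 1}\nu^p_{x,a}(V)\,\nu^{\ell-p}_{x,b}(L)=o^\ell_x(V\otimes L),
\]
and the claim follows by summing over $x\in D$. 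The main delicacy will be making the local frame choices precise enough that the tensor product filtration on $V^0\otimes L^0$ is correctly identified with the induced filtration from $(V\otimes L)^0$ when the nilpotent monodromy part is nontrivial; the counting on $F$-graded pieces, however, is unaffected by this subtlety and reduces to the bookkeeping above.
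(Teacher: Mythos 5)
Your identification of the underlying $\cO$-module of the quotient is correct and matches the paper's exact sequence $(*)$: locally at $x$, the eigen-decomposition and the frames $t^a\exp(-N_a\log t/2\pi i)e_i$ show that $(V\otimes L)^0/(V^0\otimes L^0)$ is a skyscraper with fibre $\bigoplus_{a+b\geq1}\psi_{x,a}(V)\otimes\psi_{x,b}(L)$. The gap is in the Hodge-theoretic step. You assert that the class of $t^{a+b-1}(e_i\otimes f_j)$ lies in $F^\ell(V\otimes L)^0$ precisely when $e_i\in F^pV$ and $f_j\in F^{\ell-p}L$ for some $p$, but $F^\ell(V\otimes L)^0$ is the extension over $x$ of the Hodge subbundle of the variation $V\otimes L$, not the $\cO$-span of tensor-product frame vectors; away from $x$ it agrees with the tensor-product filtration, but its limit at $x$ could a priori be larger on the quotient than the tensor-product filtration predicts. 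Equivalently, the induced filtration $F^\ell Q:=\mathrm{im}\bigl(F^\ell(V\otimes L)^0\to Q\bigr)$ need not obviously coincide with the tensor-product filtration on $\bigoplus_{a+b\geq1}\gr^aV\otimes\gr^bL$. Your closing remark that the counting on $F$-graded pieces ``is unaffected by this subtlety'' is exactly the assertion that requires proof: if the induced filtration jumped anywhere, the numbers $\dim\gr^\ell_F Q$ would change, and the whole point of the lemma is to pin down these numbers.

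The paper closes this gap by a strictness argument rather than a frame computation: it factors the quotient (after multiplying by $t$) as the image of the $F$-filtered morphism $\Gr^1(V\otimes L)\to\Gr^0V\otimes\Gr^0L$, observes that after passing to the $E$-graded pieces this is a morphism of mixed Hodge structures (hence $F$-strict), deduces that the composed morphism and the multiplication-by-$t$ isomorphism are $F$-strict, and only then reads off the dimensions as $\sum_p\sum_{a+b\geq1}\nu^p_{x,a}(V)\nu^{\ell-p}_{x,b}(L)$. This is the same mechanism that proves Lemma~\ref{lem:strictincl}. To repair your argument you would either need to invoke such a strictness statement (for instance, that the surjection from $\gr^{a}V\otimes\gr^{b}L$, a tensor product of mixed Hodge structures, onto the corresponding graded piece of $Q$ is strict), or prove directly that the limit Hodge bundles of $V\otimes L$ are spanned by the tensor-product frame sections — which is not a formal consequence of the setup.
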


Note that $o_x^\ell(V\otimes L)=0$ if $x\in D\setminus(\bmx\cap\bmy)$, so the sum is on $x\in\bmx\cap\bmy$.

\begin{proof}[Proof of Lemma \ref{lem:strictincl}]
The result is local, so the setting is on a small disc with coordinate $t$ around one of the singularities of the variations of Hodge structures. The local computation mentioned above shows that there is an exact sequence
\[\tag{$*$}
0\to V^0\otimes L^0\to(V\otimes L)^0\to t^{-1}\bigoplus_{\substack{\alpha,\beta\in[0,1)\\\alpha+\beta\geq1}}\gr^\alpha V\otimes\gr^\beta L\to0.
\]
Moreover, we have $t(V\otimes L)^0\subset V^0\otimes L^0$ and
\[
(V^0\otimes L^0)/t(V\otimes L)^0\simeq\bigoplus_{\substack{\alpha,\beta\in[0,1)\\\alpha+\beta<1}}\gr^\alpha V\otimes\gr^\beta L,
\]
giving rise to the exact sequence
\begin{multline*}\tag{$**$}
0\to\bigoplus_{\substack{\alpha,\beta\in[0,1)\\\alpha+\beta<1}}\gr^\alpha V\otimes\gr^\beta L\to(V\otimes L)^0/t(V\otimes L)^0
\to t^{-1}\bigoplus_{\substack{\alpha,\beta\in[0,1)\\\alpha+\beta\geq1}}\gr^\alpha V\otimes\gr^\beta L\to0.
\end{multline*}
We also have the following Hodge filtration:
\begin{itemize}
\item
The tensor product filtration on any $\gr^\alpha V\otimes\gr^\beta L$ considered above.
\end{itemize}

For the sake of simplicity, we will set $\Gr^0V:=V^0/tV^0$ (and similarly for~$L$ and $V\otimes L$). This space is endowed with the induced filtration $F^\cbbullet\Gr^0V$. There is also a filtration $E^\cbbullet\Gr^0V$ indexed by $\alpha\in[0,1)$ induced by the decreasing $V$\nobreakdash-filtration on $\Gr^0V$, so that $\gr^\alpha_E\Gr^0V=\gr^\alpha V$. The Hodge filtration $F^\cbbullet\gr^\alpha V$ is equal to the filtration induced by $F^\cbbullet\Gr^0V$ on $\gr^\alpha_E\Gr^0V$. We have a natural morphism
\begin{equation}\label{eq:Gr0}
\Gr^0V\otimes\Gr^0L\to\Gr^0(V\otimes L)
\end{equation}
defined as follows:
\[
\Gr^0V\otimes\Gr^0L=\frac{(V^0\otimes L^0)}{t(V^0\otimes L^0)}\onto\frac{V^0\otimes L^0}{t(V\otimes L)^0}\hto\frac{(V\otimes L)^0}{t(V\otimes L)^0}=\Gr^0(V\otimes L).
\]
This morphism is compatible with the $F$-filtrations on each term. Grading with respect to $E^\cbbullet$ gives a morphism
\[
\bigoplus_{\alpha,\beta\in[0,1)}\gr^\alpha V\otimes\gr^\beta L\to\bigoplus_{\gamma\in[0,1)}\gr^\gamma(V\otimes L).
\]
The later morphism is also $F$-filtered, and is moreover a morphism of mixed Hodge structures. It is then $F$-strict. Therefore, \eqref{eq:Gr0} is also $F$-strict. Arguing similarly, we find that for any $k\geq1$ the natural morphism
\[
(V^0/t^kV^0)\otimes(L^0/t^kL^0)\to(V\otimes L)^0/t^k(V\otimes L)^0
\]
is strictly $F$-filtered.

Let us set $\wh V^0=\varprojlim_k(V^0/t^kV^0)$, endowed with $F^p\wh V^0=\varprojlim_kF^p(V^0/t^kV^0)$. We have $(\wh V^0,F^\cbbullet\wh V^0)=\wh\cO\otimes(V^0,F^\cbbullet V^0)$. The previous result implies that the inclusion
\[
\wh V^0\otimes\wh L^0\hto\wh{(V\otimes L)}{}^0
\]
is strictly $F$-filtered, hence, regarding the previous morphism as an inclusion,
\[
F^p(\wh V^0\otimes\wh L^0)=F^p\wh{(V\otimes L)}{}^0\cap (\wh V^0\otimes\wh L^0),\quad\forall p,
\]
that is,
\[
\wh\cO\otimes F^p(V^0\otimes L^0)=\wh\cO\otimes F^p(V\otimes L)^0\cap\wh\cO\otimes (V^0\otimes L^0),\quad\forall p.
\]
By faithful flatness of $\wh\cO$ over $\cO$, we conclude that
\[
F^p(V^0\otimes L^0)=F^p(V\otimes L)^0\cap(V^0\otimes L^0), \quad\forall p.\qedhere
\]
\end{proof}

\begin{proof}[Proof of Lemma \ref{lem:strictincl2}]
We consider the composed $F$-filtered morphism
\begin{equation}\label{eq:Gr02}
\Gr^1(V\otimes L)\onto\frac{t(V\otimes L)^0}{t(V^0\otimes L^0)}\hto\frac{V^0\otimes L^0}{t(V^0\otimes L^0)}=\Gr^0V\otimes\Gr^0L.
\end{equation}
After grading with respect to the $E^\cbbullet$ filtration, it becomes
\[
\bigoplus_{\gamma\in[1,2)}\gr^\gamma(V\otimes L)\to\bigoplus_{\alpha,\beta\in[0,1)}\gr^\alpha V\otimes\gr^\beta L,
\]
and has image
\[
\bigoplus_{\substack{\alpha,\beta\in(0,1)\\\alpha+\beta\geq1}}\gr^\alpha V\otimes\gr^\beta L.
\]
Being a morphism of mixed Hodge structures, it is also $F$-strict, and so is \eqref{eq:Gr02}. Since the isomorphism $t:V^0\to V^1$ is $F$-strict (and similarly for $L$ and $V\otimes L$), the isomorphism
\[
t:\frac{(V\otimes L)^0}{V^0\otimes L^0}\to\frac{t(V\otimes L)^0}{t(V^0\otimes L^0)}
\]
is also $F$-strict. As a consequence,
\[
\dim \gr^\ell_F\frac{(V\otimes L)^0}{V^0\otimes L^0}=\sum_{\substack{\alpha,\beta\in(0,1)\\\alpha+\beta\geq1}}\sum_p\dim \gr^{\ell-p}_F\gr^\alpha V\cdot\dim\gr^p_F\gr^\beta L.\qedhere
\]
\end{proof}


\section{Transformation of  local Hodge data away from $ \infty$ under middle convolution}\label{seclocdata}

Recall that in general,  the convolution $V\star L$  is neither irreducible nor  an intermediate extension 
anymore (cf. Assumption~\ref{ass1}). 
The following definition gives the largest factor
 of $V\star L$ which is an intermediate extension: 

\begin{defn}
 Let $U:=\AA^1\setminus \x \star \y $ be the smooth locus of   $V\star L.$ Define
 $   V\tstar L$ to be the intermediate extension of $V\star L|_U$ to $\AA^1.$ 
 \end{defn}
%

 For $t\in \AA^1$ let $d_t:\AA^1\to \AA^1, x\mapsto t-x,$ and write 
 $L(t-x)$ for $d_t^+L.$ 
The following result clarifies the relation between $ V\star L$ and  $V\tstar L:$

\begin{thm}\label{delta} One has  
a short exact sequence of Hodge modules
$$ 0\to V\tstar L \to V\star L \to H\to 0,$$
where 
\begin{eqnarray*}
 H&=& \left\{ \begin{array}{cl}
                        \delta_c(-p-1)  &\textrm{ if } \exists p\in \ZZ,c\in \AA^1:  V(p)\simeq   L^\vee(c-x) \\
                        0 & \mbox{ otherwise}
                       \end{array}\right. .                       
 \end{eqnarray*}
 If $H\neq 0$ then $p, c$ are uniquely determined.
 \end{thm}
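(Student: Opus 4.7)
The plan is to establish the short exact sequence via semisimplicity of $V\star L$, and then to identify the punctual cokernel $H$ by a Schur-type rigidity argument, with careful tracking of Tate twists.

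First, I would derive the exact sequence from Saito's decomposition theorem. Since $V$ and $L$ are simple (Assumption~\ref{ass1}) and lie in $\catP$ (Lemma~\ref{lem:starexact}(i)), the external product $V\boxtimes L$ is a simple polarized Hodge module on $\AA^1\times\AA^1$, and its intermediate extension $j_{!*}(V\boxtimes L)$ along a projectivisation $\wt s:X\to\AA^1$ of $s$ remains simple. The decomposition theorem applied to the proper pushforward $\wt s_+$ yields a direct-sum decomposition of $\wt s_+ j_{!*}(V\boxtimes L)$ into shifted simple polarized Hodge modules on $\AA^1$. Since $V\star_! L\to V\star_* L$ is induced by $\wt s_+$ of the canonical map $j_\dagger\to j_+$, its image $V\star L$ is the zeroth perverse cohomology ${}^{p}\mathcal{H}^0 \wt s_+ j_{!*}(V\boxtimes L)$, hence is itself semisimple. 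Because $V\star L|_U = V\tstar L|_U$ on the smooth locus $U:=\AA^1\setminus\x\star\y$, the decomposition must take the form $V\star L = V\tstar L\oplus H$ with $H$ a finite direct sum of Tate-twisted skyscraper Hodge modules supported in $\x\star\y$; this yields the short exact sequence.

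Second, I would identify $H$ locally at each $c\in\x\star\y$ through a convolution-Hom adjunction and Schur's lemma. The dual of $L$ in the middle-convolution monoidal category is, up to a Tate twist, the pullback $\sigma^+ L^\vee$ under $\sigma:x\mapsto -x$, and one checks $\sigma^+ L^\vee\star\delta_c\simeq d_c^+ L^\vee = L^\vee(c-x)$. Combined with the standard convolution-Hom adjunction and careful bookkeeping of twists, this should produce, for each $p\in\ZZ$, a natural isomorphism
\[
\mathrm{Hom}(V\star L,\, \delta_c(-p-1)) \;\simeq\; \mathrm{Hom}(V,\, L^\vee(c-x)(-p)).
\]
The right-hand side sits between two simple polarized Hodge modules, so by Schur's lemma it is one-dimensional precisely when $V\simeq L^\vee(c-x)(-p)$, i.e.\ $V(p)\simeq L^\vee(c-x)$, and zero otherwise. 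The left-hand side equals $\mathrm{Hom}(H,\delta_c(-p-1))$ and therefore counts the multiplicity of $\delta_c(-p-1)$ in $H$. Running over $c$ and $p$ yields the description of $H$ claimed.

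Finally, uniqueness of $(c,p)$ when $H\neq 0$ is immediate: two isomorphisms $V(p_i)\simeq L^\vee(c_i-x)$ for $i=1,2$ combine into $L^\vee(c_1-x)\simeq L^\vee(c_2-x)(p_1-p_2)$, so Tate-twist comparison forces $p_1=p_2$; then $L$ would be invariant under translation by $c_2-c_1$, contradicting non-translation-invariance (ensured by Assumption~\ref{ass1} together with Lemma~\ref{lem:starexact}(i)). The main technical obstacle I anticipate is the precise control of the Tate twist $(-p-1)$: the extra $(-1)$ reflects a cohomological shift inherent to the middle convolution, visible heuristically from the fact that a trivial rank-one summand $\CC(-p)$ of $V\otimes L(c-x)$ on $\AA^1$ contributes to the stalk of $V\star_* L$ at $c$ via $H^2_c(\AA^1,\CC(-p)) = \CC(-p-1)$. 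Making this shift rigorous, either through a clean functorial adjunction or via a direct Hodge-theoretic computation of the trivial-summand contribution, is where the proof is most delicate.
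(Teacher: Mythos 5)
Your strategy is viable and genuinely different from the paper's. The paper does not use the decomposition theorem at all: it works directly on the $!$-convolution, citing Katz, Prop.~2.6.9, to the effect that $\cH^n(\RH(V\star_\dag L))$ is concentrated in degrees $-1,0$ and that $\cH^0(\RH(V\star_\dag L))$, if nonzero, is a rank-one skyscraper at a unique point $c$ with stalk $\mathrm{Hom}(\V,\mathcal{L}(c-x)^\vee)^\vee(-1)$; the only Hodge-theoretic input is then that an irreducible local system underlies at most one VPCHS up to Tate twist, which pins down the weight $p+1$ of that stalk and hence the twist $(-p-1)$. Your route --- semisimplicity of $V\star L$ from purity of $j_{!*}(V\boxtimes L)$ and the decomposition theorem for $\wt s_+$, followed by a multiplicity count via adjunction and Schur --- is conceptually cleaner and even upgrades the exact sequence to a direct sum (which the paper in fact uses later, Eq.~\eqref{eqtildestar}), at the cost of heavier machinery. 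One imprecision: $V\star L$ is in general only a \emph{subquotient} of the zeroth perverse cohomology of $\wt s_+j_{!*}(V\boxtimes L)$, since the maps ${}^{\rm p}\cH^0\wt s_+j_\dag\to{}^{\rm p}\cH^0\wt s_+j_{!*}\to{}^{\rm p}\cH^0\wt s_+j_+$ need not remain surjective resp.\ injective; this is harmless for your purposes, as subquotients of semisimple objects are semisimple.

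The one genuine gap is in the identification of $H$. The convolution--Hom adjunction you invoke naturally computes $\mathrm{Hom}(V\star_\dag L,\delta_c(-p-1))$, not $\mathrm{Hom}(V\star L,\delta_c(-p-1))$. Since $V\star L$ is a quotient of $V\star_\dag L$, this only yields $\dim\mathrm{Hom}(V\star L,\delta_c(-p-1))\leq 1$ together with the support and twist constraints, i.e.\ the upper bound on the multiplicity. For the lower bound --- that $H\neq 0$ whenever $V(p)\simeq L^\vee(c-x)$ --- you must show that the nonzero map $V\star_\dag L\to\delta_c(-p-1)$ does not vanish on the image of $V\star_\dag L\to V\star_* L$, i.e.\ that the punctual quotient of the $!$-convolution survives in the middle convolution. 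This is precisely the nontrivial input the paper takes from Katz (``$\cH^0(\RH(V\star_\dag L))$ maps isomorphically onto its image inside $\RH(V\star L)$''), and it does not follow from semisimplicity or Schur's lemma alone; your heuristic $H^2_c(\AA^1,\CC(-p))=\CC(-p-1)$ correctly predicts the twist but does not address this point. A minor further remark: the non-translation-invariance of $L$ used in your uniqueness argument is not literally stated in Assumption~\ref{ass1} or Lemma~\ref{lem:starexact}; it holds because a nonconstant regular Hodge module with a finite singularity cannot be translation invariant.
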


\begin{proof}   As in the proof of \cite{Katz96}, Proposition~2.6.9, one finds that  
  $\cH^n(\RH(V\star_\dag L))$ vanishes outside $n=-1,0$ and that if $\cH^0(\RH(V\star_\dag L))\neq 0,$
  then there exists a unique point
  $c\in \AA$ such that $\cH^0(\RH(V\star_\dag L))$ is a rank-one skyscraper sheaf with support at $c$
  such that
  the stalk at $c$ is isomorphic to the Tate-twisted space of invariants
  $\mathrm{Hom}(\V,\mathcal{L}(c-x)^\vee)^\vee(-1).$  
  A necessary condition for the non-vanishing of  $\mathrm{Hom}(\V,\mathcal{L}(c-x)^\vee)^\vee(-1)$ 
  is that 
  one has an isomorphism of local   systems $\V\simeq \mathcal{L}(c-x)^\vee.$ 
Since irreducible VPCHS
are determined up to a Tate-twist by their local systems, there exists a unique $p$ such that the Tate-twist  $\V(p)$ 
becomes VPCHS-isomorphic to  $\mathcal{L}(c-x)^\vee$ in this case. 
This implies that, taking Hodge structures into account, 
 the stalk $\mathrm{Hom}(\V,\mathcal{L}(c-x)^\vee)^\vee(-1)$ has 
weight $p+1.$ 
Since $V\star L$ is the image of $V\star_\dag L$ and since $\cH^0(\RH(V\star_\dag L))$ maps isomorphically onto its image
inside $\RH(V\star L),$ the 
claim follows.  
 \end{proof}

\begin{thm}\label{H-numbers1}
 If $t\in \AA^1\setminus \x\star \y$
 (cf.~Def.~\ref{defconv}) 
 then   \begin{eqnarray*} 
               h^l(V\tstar  L)         
   &=& \delta^{l-1}(V \ten L(t-x))-\delta^l(V \ten L(t-x))\\
   &&  \quad \quad\quad-h^l(V \ten L(t-x))-h^{l-1}(V \ten L(t-x))+\co^{l-1}(V \ten L(t-x)) .
  \end{eqnarray*}  \end{thm}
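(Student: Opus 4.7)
The plan is to reduce the claim to Proposition~\ref{H-para} applied to the tensor product $W := V \otimes L(t-x)$; the bridge is the fibre formula identifying the stalk of $V \tstar L$ at the smooth point $t$ with the parabolic cohomology $H^1_{\rm par}(W)$. First, by Theorem~\ref{delta} the objects $V \tstar L$ and $V \star L$ differ at most by a skyscraper supported at a single point, which necessarily lies in the singular locus $\x \star \y$; hence on $U := \AA^1 \setminus \x \star \y$ the two modules agree, and in particular at $t \in U$ one has $h^l(V \tstar L) = \dim \gr^l_F (V \star L)_t$.

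Next, I would invoke the fibre formula for the middle convolution at a smooth point. Writing $s : \AA^1 \times \AA^1 \to \AA^1$ for the addition map and parameterising the fibre $s^{-1}(t)$ by $x \mapsto (x, t-x)$, the restriction of $V \boxtimes L$ to $s^{-1}(t)$ is identified with $V \otimes d_t^+ L = W$ on $\AA^1$. Pushing forward along the constant map $\AA^1 \to \{t\}$ and taking the image of the $!$-pushforward inside the $*$-pushforward (which is precisely how $V \star L$ is produced over $U$), one obtains a canonical isomorphism of polarized complex Hodge structures
$$ (V \star L)_t \;\simeq\; H^1_{\rm par}(W) = H^1\bigl(\PP^1, j_* \mathcal{H}^{-1}(\RH(W))\bigr) $$
in Saito's framework. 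This is the Hodge-theoretic incarnation of Katz's stalk computation for middle convolution and extends the case $L = L_\chi$ used in \cite{DS}.

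Finally, plugging $W = V \otimes L(t-x)$ into Proposition~\ref{H-para} reproduces, term by term, the claimed formula for $h^l(V \tstar L)$.

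The main obstacle will be the second step: verifying that the stalk-cohomology identification respects the polarized complex Hodge structures, not merely the underlying vector spaces. This is a base-change compatibility for the intermediate extension with restriction to smooth fibres in Saito's theory, combined with the standard description of parabolic cohomology as the image of $H^1_c \to H^1$ on $\PP^1$.
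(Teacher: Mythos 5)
Your proposal is correct and follows essentially the same route as the paper: the paper's proof likewise identifies the stalk $(V\tstar L)_t=(V\star L)_t$ at the smooth point $t$ with $H^1(\PP^1, j_*(\V\otimes\mathcal{L}(t-x)))=H^1_{\rm par}(V\otimes L(t-x))$ and then applies Proposition~\ref{H-para}. The only difference is that you flag the Hodge-theoretic base-change compatibility as the delicate point and detour through Theorem~\ref{delta} (which is not needed, since $(V\tstar L)|_U=(V\star L)|_U$ holds by the very definition of $\tstar$), while the paper takes the stalk identification for granted.
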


\begin{proof} For $t\in \AA^1,$ let 
 $d_t(\y)=\{t-y_1,\ldots ,t-y_s\}$ and 
 let $j:\AA^1\setminus (\x \cup  d_t(\y)) \hookrightarrow \PP^1$ be the natural inclusion. Since $t\notin \x\star \y$ one has 
$\RH(V)\otimes\RH(L(t-x))\simeq j_*(\V\otimes \mathcal{L}(t-x))$ and hence 
$$(V\tstar L)_t=(V\star L)_t=H^1(\PP^1, j_*(\V\otimes \mathcal{L}(t-x))).$$
The claim follows now from Proposition~\ref{H-para}.  \end{proof}

The following result determines the local Hodge data of the vanishing cycles:

\begin{thm}\label{thmthomseb} 
Let $\lambda=\exp(-2\pi ia )\, (a\in (0,1])$ be a fixed element of the unit circle 
$S^1$   and let $\lambda_1,\lambda_2$ be variable elements in $S^1$ 
 with $\lambda=\lambda_1\lambda_2.$ 
  For such $\lambda_i \in S^1\, (i=1,2),$ let $a_i\in (0,1]$ with
$\lambda_i=\exp(-2\pi i a_i).$ If $t\in \x\star \y , t\neq \infty,$   then 
\begin{eqnarray*} \mu^p_{t,a}(V\star L)= \nu^p_F(\varphi_{t,\lambda}(V\star L))&=&\sum_{x_i+y_j=t} \Big( \sum_{
 a_1+a_2=a}\,\sum_{l+k=p-1}
\nu^l(\varphi_{x_i,\lambda_1}(V))\nu^k(\varphi_{y_j,\lambda_2}(L)) +  \\
& &    \quad \quad\quad  \; \; \sum_{ a_1+a_2=1+a}\,\sum_{l+k=p\,}
\nu^l(\varphi_{x_i,\lambda_1}(V))\nu^k(\varphi_{y_j,\lambda_2}(L)) 
\Big),
\end{eqnarray*}  where  the expression $\nu^p$ abbreviates $\dim{\rm gr}^p_F$. \end{thm}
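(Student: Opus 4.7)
The plan is to reduce the computation to Saito's Thom--Sebastiani theorem for vanishing cycles in the Hodge-theoretic framework, as formulated and used in \cite{DS}, Theorem~3.2.3, with the proof supplied in the corrigendum \cite{DSCorr}. The overall structure parallels the Kummer case treated in \cite{DS}, but the richer local behaviour of $L$ (as opposed to a Kummer module) forces both monodromy eigenvalues to range over $S^1$, which is the origin of the sum over $\lambda_1\lambda_2=\lambda$.

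First I fix $t\in\x\star\y$ with $t\neq\infty$ and localize the computation at $t$. Since $s:\AA^1\times\AA^1\to\AA^1$ is smooth and $V\boxtimes L$ is a smooth variation outside $\x\times\y$, the intersection $s^{-1}(t)\cap(\x\times\y)=\{(x_i,y_j):x_i+y_j=t\}$ is finite (because $t\neq\infty$), and these are the only possible sources of vanishing cycles of $s_+(V\boxtimes L)$ at $t$. Writing $V\star L$ as the image of $s_\dag(V\boxtimes L)\to s_+(V\boxtimes L)$ and noting that, by Theorem~\ref{delta}, $V\star L$ differs from $V\tstar L$ by at most one skyscraper summand $\delta_c$ whose vanishing cycle is concentrated in the eigenvalue $\lambda=1$, I read off $\varphi_{t,\lambda}(V\star L)$ as an orthogonal direct sum of local contributions indexed by the pairs $(x_i,y_j)$ above, up to an easily tracked correction at $\lambda=1$.

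Second, I will introduce local coordinates $u=x-x_i$, $v=y-y_j$ near $(x_i,y_j)$, so that the function $s-t$ on $\AA^1\times\AA^1$ becomes the sum $u+v$. Saito's Thom--Sebastiani theorem then yields a canonical isomorphism of mixed Hodge structures
\[
\varphi_{u+v,\lambda}(V\boxtimes L)_{(0,0)}\;\simeq\;\bigoplus_{\lambda_1\lambda_2=\lambda}\,T(\lambda_1,\lambda_2)\bigl(\varphi_{u,\lambda_1}(V)_{x_i}\otimes\varphi_{v,\lambda_2}(L)_{y_j}\bigr),
\]
where the Tate twist $T(\lambda_1,\lambda_2)$ equals $(-1)$ in the ``direct'' regime $a_1+a_2=a$ and is trivial in the ``wrap-around'' regime $a_1+a_2=1+a$. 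This dichotomy is the $V$-filtration phenomenon already encountered in the exact sequences $(*)$ and $(**)$ of the proof of Lemma~\ref{lem:strictincl}.

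Finally I take $\gr^p_F$ on both sides. The Künneth identity $\nu^p(A\otimes B)=\sum_{l+k=p}\nu^l(A)\nu^k(B)$ combined with $\nu^p(A(-1))=\nu^{p-1}(A)$ translates the two twist regimes into the constraints $l+k=p-1$ (for $a_1+a_2=a$) and $l+k=p$ (for $a_1+a_2=1+a$) appearing in the statement; summing over $\lambda_1\lambda_2=\lambda$ (parametrised by $a_1,a_2\in(0,1]$) and over preimages $(x_i,y_j)$ of $t$ concludes the argument. The main obstacle is extracting the correct Tate twist convention from Saito's Thom--Sebastiani, since this is precisely what produces the shift between the two sums; once the convention of \cite{DSCorr} is correctly applied, the remainder is bookkeeping in the additive structure of the mixed Hodge numbers.
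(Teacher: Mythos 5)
Your proposal is correct and follows essentially the same route as the paper: both invoke Saito's Thom--Sebastiani theorem (in the form of \cite{DS}, Theorem~3.2.3 and its corrigendum) at each pair $(x_i,y_j)$ with $x_i+y_j=t$, identify these as the only points supporting vanishing cycles in the fibre over $t$, and conclude by compatibility of vanishing cycles with the proper pushforward defining the convolution, the two Tate-twist regimes $a_1+a_2=a$ and $a_1+a_2=1+a$ accounting for the shift between $l+k=p-1$ and $l+k=p$. The paper is in fact terser than you are about the skyscraper summand from Theorem~\ref{delta}, which is simply included in the pushforward computation rather than corrected for.
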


\begin{proof} By Saito's version of the Thom-Sebastiani theorem
(cf.~\cite{DS}~Theorem~3.2.3 and its erratum) one knows that, for all 
$(x_i,t-y_j)$  as in  the theorem,
\begin{eqnarray} \label{eqthomseb} {\rm gr}^p_F(\varphi_{(x_i,t-y_j),\lambda}(V \boxtimes L))&=& \bigoplus_{
 a_1+a_2=a}\,\bigoplus_{l+k=p-1}
{\rm gr}^l_F(\varphi_{x_i,\lambda_1}(V))\otimes {\rm gr}^k_F(\varphi_{y_j,\lambda_2}(L)) \oplus \nonumber  \\
& &  \quad  \quad \quad\quad  \; \; \bigoplus_{ a_1+a_2=1+a}\, \bigoplus_{l+k=p\,}
{\rm gr}^l_F(\varphi_{x_i,\lambda_1}(V))\otimes {\rm gr}^k_F(\varphi_{y_j,\lambda_2}(L)) .
\end{eqnarray} 
Moreover,  the support of the vanishing cycles in the fibre over $t$ is the union of  these $(x_i,t-y_j).$ 
Since middle convolution is afterwards formed via higher direct image along 
the compactified (hence proper) $\pr_2$
and since formation of vanishing cycles is compatible with higher direct images along projective morphisms
the claim follows. 
\end{proof}

Using $\co_{\neq\infty ,a}^p(V)=\sum_{x_i\neq \infty}\co^p_{x_i,a}(V)$ and 
$ \co_{\neq \infty}^p(V)=\sum_{a} \co^p_{\neq \infty,a}(V)$ one obtains:
  
\begin{cor}\label{CorTS} Let $a\in [0,1).$ Then the following holds:
\begin{enumerate}
\item For $t\neq \infty$
\begin{eqnarray*} 
\co^{p}_{t,a}(V\star L)
 &=&  \sum_{x_i+y_j=t}\Big(\sum_{a_1+a_2=a}\sum_{l+k=p-1}
\co^{l}_{x_j,a_1}(V)\co^{k}_{y_j,a_2}(L) + \\
&&\quad \quad \quad \;
\sum_{ a_1+a_2=1+a}\sum_{l+k=p\,}
\co^l_{x_i,a_1}(V)\co^k_{y_j,a_2}(L) 
\Big) \\
\co_{\neq \infty}^p(V\star L)&=&\sum_{i+j=p}\sum_{a_1+a_2\geq 1} \co^i_{\neq \infty,a_1}(V)
\co^j_{\neq \infty,a_2}(L)+\\
&&\quad \quad \sum_{i+j=p-1}\sum_{a_1+a_2< 1} \co^i_{\neq \infty,a_1}(V)
\co^j_{\neq \infty,a_2}(L)
\end{eqnarray*}
and 
\begin{eqnarray*}  \sum_{p\leq l}\co_{\neq \infty}^p(V\star L)&=&\sum_{i+j\leq l-1} \co^i_{\neq \infty}(V)
\co^j_{\neq \infty}(L)+ \sum_{j}\sum_{a_1+a_2\geq 1} \co^j_{\neq \infty,a_1}(V)
\co^{l-j}_{\neq \infty,a_2}(L).\end{eqnarray*}
\item If $L_\chi$ is generic with respect to $L$ and $V\tstar L,$ then
$$
 h^p(V\star  (L\star  L_\chi))-h^p( (V\tstar  L)\star L_\chi)=h^p(H^0(\mathcal{H}^0(V\star L)))= \co^{p-1}_{\neq \infty} (V\star   L)                -\co^{p-1}_{\neq \infty} (V\tstar   L).$$ 
\end{enumerate}
\end{cor}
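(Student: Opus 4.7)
The plan is to deduce~(i) by unfolding the Thom--Sebastiani formula of Theorem~\ref{thmthomseb} in terms of the invariants $\co^\bullet_{\bullet,\bullet}$, and~(ii) from the short exact sequence of Theorem~\ref{delta} combined with the exactness of nearby/vanishing cycles and of middle convolution by a generic Kummer module.

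For the first formula of~(i), I would translate Theorem~\ref{thmthomseb} from the $\mu$-invariants to the $\co$-invariants via $\mu^l_{x,a}=\nu^l_{x,a}=\co^l_{x,a}$ for $a\in(0,1)$, together with the (implicit) definition $\co^l_{x,0}:=\mu^{l+1}_{x,0}$ which is forced by the decomposition $\co^l_x=\sum_{a\in[0,1)}\co^l_{x,a}$. Each product in the Thom--Sebastiani formula then splits according to whether $a_1$ and $a_2$ vanish. The crucial observation is that the shift built into $\co^l_{x,0}=\mu^{l+1}_{x,0}$ exactly compensates the shift between the two blocks ``$(l+k,a_1+a_2)=(p-1,a)$'' and ``$(l+k,a_1+a_2)=(p,1+a)$'' of the formula: a term with $a_1=1$ (theorem convention) in the second block re-encodes, via $\mu^l_{x_i,0}=\co^{l-1}_{x_i,0}$, as a term with $a_1=0$ (corollary convention) and $(l-1)+k=p-1$, which lands in the first block of the corollary. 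Checking the four cases ($a_1=0$, $a_2=0$, $a_1=a_2=0$ only possible if $a=0$, and $a_1,a_2\neq 0$) yields the formula for $\co^p_{t,a}(V\star L)$. I then sum over $t\in\x\star\y\setminus\{\infty\}$ and over $a\in[0,1)$, using $\bigcup_{a\in[0,1)}\{a_1+a_2=a\}=\{a_1+a_2<1\}$ and $\bigcup_{a\in[0,1)}\{a_1+a_2=1+a\}=\{a_1+a_2\geq 1\}$, to obtain the second formula. Summing the second formula over $p\leq l$ and splitting off the top-degree contribution $i+j=l$ from the part with $i+j\leq l-1$ (where the constraint on $a_1+a_2$ disappears and each partial sum collapses to $\co^\bullet_{\neq\infty}$) gives the third formula.

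For~(ii), I would apply $\star L_\chi$ to the short exact sequence $0\to V\tstar L\to V\star L\to H\to 0$ provided by Theorem~\ref{delta}, where $H$ is either $0$ or a Tate-twisted skyscraper $\delta_c(-p_0-1)$; the case $H=0$ is immediate, so assume $H\neq 0$. Genericity of $\chi$ with respect to $L$ and $V\tstar L$ ensures that $\star_!L_\chi$ and $\star_*L_\chi$ agree with $\star L_\chi$ on all three terms (no $\chi^{\pm1}$-eigenvalue is available to create a kernel or cokernel), so middle convolution by $L_\chi$ is exact on the sequence. Associativity $V\star(L\star L_\chi)=(V\star L)\star L_\chi$ together with exactness of $\psi_\infty$ then yield $h^p(V\star(L\star L_\chi))-h^p((V\tstar L)\star L_\chi)=h^p(H\star L_\chi)$. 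Since $H\star L_\chi=L_\chi(x-c)(-p_0-1)$ is a translated, Tate-twisted Kummer module whose generic Hodge numbers coincide with those of the one-dimensional Hodge structure $\CC(-p_0-1)$ underlying $H$, one has $h^p(H\star L_\chi)=h^p(H)$. On the other hand, Remark~\ref{remdelta} together with exactness of vanishing cycles gives $\co^{p-1}_{\neq\infty}(H)=\mu^p_{c,0}(H)=h^p(H)$, hence $\co^{p-1}_{\neq\infty}(V\star L)-\co^{p-1}_{\neq\infty}(V\tstar L)=h^p(H)$. Identifying $H^0(\mathcal{H}^0(V\star L))$ with the Hodge structure underlying the punctual summand $H$ closes both equalities.

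The main obstacle is the combinatorial bookkeeping in~(i): the shift hidden in $\co^l_{x,0}=\mu^{l+1}_{x,0}$ interacts delicately with the shift between the two Thom--Sebastiani blocks, and the four cases must be matched carefully to see that everything reassembles into the claimed formula. The exactness of $\star L_\chi$ for generic $\chi$, although subsidiary to~(ii), also demands a precise justification since middle convolution is not exact on an arbitrary short exact sequence.
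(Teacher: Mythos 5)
Your proposal is correct and follows essentially the same route as the paper: part~(i) is the same translation of Theorem~\ref{thmthomseb} into the $\co$-invariants, with the shift $\co^l_{x,0}=\mu^{l+1}_{x,0}$ absorbing the index change between the two Thom--Sebastiani blocks, followed by the same summations over $a$ and over $p\leq l$; part~(ii) matches the paper's argument (which uses the splitting $V\star L=V\tstar L\oplus\delta_c(-q-1)$ rather than exactness of $\star L_\chi$ on the sequence, a negligible difference) via associativity, genericity of $L_\chi$, and Remark~\ref{remdelta}.
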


  \begin{proof} Let us first treat the case where $\lambda=1,$ equivalent to $a=0$ (note that inside Theorem~\ref{thmthomseb} the residues $a$ 
  are contained in $(0,1],$ whereas in the rest of the paper $a\in [0,1),$ hence we have to adapt our notation 
  to 
  this situation).
By Theorem~\ref{thmthomseb} 
 \begin{eqnarray*} 
 \co^{p}_{t,0}(V\star L) &=& \mu^{p+1}_{t,0}(V\star L)\\
  &=& \sum_{x_i+y_j=t} \Big( 
  \sum_{a_1+a_2=1}\sum_{l+k=p}
\nu^l(\varphi_{x_i,\lambda_1}(V))\nu^k(\varphi_{y_j,\lambda_2}(L)) \\
&& \quad \quad \quad \quad \quad \quad +
\sum_{ a_1+a_2= 2}\sum_{l+k=p+1\,}
\nu^l(\varphi_{x_i,\lambda_1}(V))\nu^k(\varphi_{y_j,\lambda_2}(L)) 
\Big)\\
 &=&  
   \sum_{x_i+y_j=t} \Big(\sum_{a_1+a_2= 0}\sum_{l+k=p+1}
\mu^l_{x_j,a_1}(V)\mu^k_{y_j,a_2}(L) \\
&&  \quad \quad \quad \quad \quad \quad +
\sum_{ a_1+a_2=1}\sum_{l+k=p}
\mu^l_{x_i,a_1}(V)\mu^k_{y_j,a_2}(L) 
\Big) \\
&=&   \sum_{x_i+y_j=t}\Big(\sum_{a_1+a_2= 0}\sum_{l+k=p+1}
\co^{l-1}_{x_j,0}(V)\co^{k-1}_{y_j,0}(L) \\
&&  \quad \quad \quad \quad \quad \quad +
\sum_{ a_1+a_2=1}\sum_{l+k=p}
\co^l_{x_i,a_1}(V)\co^k_{y_j,a_2}(L) 
\Big) \\
&=&  \sum_{x_i+y_j=t}\Big(\sum_{a_1+a_2= 0}\sum_{l+k=p-1}
\co^{l}_{x_j,0}(V)\co^{k}_{y_j,0}(L) \\
&&  \quad \quad \quad \quad \quad \quad +
\sum_{ a_1+a_2=1}\sum_{l+k=p}
\co^l_{x_i,a_1}(V)\co^k_{y_j,a_2}(L) 
\Big) .
 \end{eqnarray*}
Note that in the above sum we switch from $a_i\in (0,1]$ to $a_i\in [0,1)$ so that the case $a_1+a_2=2$
now corresponds to $a_1+a_2=0$ (and $l+k=p+1$). 
 Analogously we get for $0<a<1$
  \begin{eqnarray*} 
 \co^{p}_{t,a}(V\star L) &=& \mu^p_{t,a}(V\star L)\\
 &=&  \sum_{x_i+y_j=t}\Big(\sum_{a_1+a_2= a}\sum_{l+k=p-1}
\co^{l}_{x_j,a_1}(V)\co^{k}_{y_j,a_2}(L) \\
&&  \quad \quad \quad \quad \quad \quad +
\sum_{ a_1+a_2=1+a}\sum_{l+k=p\,}
\co^l_{x_i,a_1}(V)\co^k_{y_j,a_2}(L) 
\Big) .
   \end{eqnarray*}

   Hence the first claim follows.
   In the case where $V\star L= V\tstar L,$ the formula given in (ii) holds obviously true.   By Theorem~\ref{delta}, if $V\star L\neq V\tstar L,$ then
  \begin{equation}\label{eqtildestar}  V\star L=V\tilde{\star} L\oplus \delta_c(-q-1)\end{equation}
  and $V(q)\simeq L^\vee({c-x}).$ This implies the first equation in~(ii). Since $L_\chi$ is generic, 
  $$ (V\tstar L)\star L_\chi = (V\tstar L)\tstar L_\chi.$$ By associativity of the middle convolution (under 
  the assumption that $L,V,L_\chi$ are irreducible and nontrivial) 
  \begin{eqnarray*}   V\star  (L\star  L_\chi)&=&(V\star  L)\star  L_\chi\\
  &=&(V\tilde{\star} L\oplus \delta_c(-q-1))\star L_\chi \\
  &=& (V\tilde{\star} L)\star L_\chi \oplus L_\chi({x-c})(-q-1) .
  \end{eqnarray*}
  Therefore 
  $$ h^p(V\star  (L\star  L_\chi))-h^p( (V\tstar  L)\star L_\chi)=h^p(L_\chi({x-c})(-q-1))=\delta_{p,q+1},$$ 
  where $\delta_{i,j}$ denotes the usual Kronecker-delta. On the other hand, Rem.~\ref{remdelta} 
  and \eqref{eqtildestar} imply that        $$  \co^{p-1}_{\neq \infty} (V\star   L)    -\co^{p-1}_{\neq \infty} (V\tstar   L)= 
  \mu^p_{c,0}(V\tstar L\oplus  \delta_c(-q-1))-  \mu^p_{c,0}(V\tstar L)=\mu^p_{c,0}(\delta_c(-q-1))=\delta_{p,q+1}.$$

  \end{proof}


  \section{Transformation of  global Hodge data under middle convolution}\label{secglobal}

The following result transforms a general convolution with a Kummer Hodge module to the {\it standard situation},
considered 
in \cite{DS}, Assumption~1.1.2:

\begin{thm}
\label{Falt-Lchi}
 Let $V\star L_\chi$ be  viewed as Hodge module on $\PP^1$ by taking the minimal extension using the 
 canonical map $\AA^1_t\hookrightarrow \PP^1.$ Let $x_1,\ldots, x_r\in \AA^1(\CC)$ denote the 
 finite singularities of $V$ and let $\phi_t:\PP^1(\CC)\to \PP^1(\CC) $ be the M\"obius transformation   that exchanges $0$ and $\infty$ by inverting the 
 coordinate $t$ of $\AA^1.$  If $0$ is a smooth point 
 of $V$ then 
$V\star L_\chi$ can be obtained as
$$ V\star L_\chi =\phi_t^+((\phi_x^+(V \otimes  L_\chi)\star L_\chi) \ten L_{\chi^{-1}}).$$ 
 \end{thm}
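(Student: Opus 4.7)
My plan is to establish the identity first at the level of underlying perverse sheaves by an explicit Möbius change of variables in the integral representation of middle convolution, and then lift it to complex polarized Hodge modules by functoriality. The statement is the Hodge-theoretic incarnation of Katz's Möbius trick (cf.~\cite{Katz96}) that reduces a convolution with a Kummer sheaf to the ``standard situation'' of \cite{DS}, Assumption~1.1.2, where $0$ appears among the singularities of the first factor.

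Concretely, I would compute the generic stalk of the right hand side at a point $t\in\AA^1\setminus\x$. Since $\phi_t^+$ amounts to evaluation at $1/t$, this stalk is
\[
H^1\bigl(\PP^1,j_*\bigl(\phi_x^+(\V\otimes\LL_\chi)(y)\otimes\LL_\chi(1/t-y)\bigr)\bigr)\otimes\LL_{\chi^{-1}}(1/t).
\]
Substituting $y=1/x$ and using the standard identifications $\phi_x^+\LL_\chi\simeq\LL_{\chi^{-1}}$ on $\mathbb{G}_m$ and the multiplicativity $\LL_\chi(ab)\simeq\LL_\chi(a)\otimes\LL_\chi(b)$ of the Kummer sheaf, one obtains
\[
\LL_\chi\bigl(\tfrac{1}{t}-\tfrac{1}{x}\bigr)=\LL_\chi\bigl(\tfrac{x-t}{tx}\bigr)\simeq\LL_\chi(x-t)\otimes\LL_{\chi^{-1}}(x)\otimes\LL_{\chi^{-1}}(t)
\]
(up to the constant Kummer twist by $\LL_\chi(-1)$). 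After the $\LL_{\chi^{\pm1}}(x)$ factors cancel inside $H^1$ and the outer $\LL_{\chi^{-1}}(t)\otimes\LL_{\chi^{-1}}(1/t)\simeq\LL_{\chi^{-1}}(t)\otimes\LL_\chi(t)$ cancels as well, what remains is the integrand $\V(x)\otimes\LL_\chi(t-x)$ that computes $(V\star L_\chi)_t$. The hypothesis that $0$ is a smooth point of $V$ is used precisely to ensure that $\phi_x^+(V\otimes L_\chi)$ is an irreducible nontrivial Hodge module with a single Kummer-type singularity at $\infty$ (coming from $L_\chi$) beyond $1/x_1,\ldots,1/x_r$, so that the inner middle convolution is well defined by Lemma~\ref{lem:starexact}.

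To lift the identification to Hodge modules, I would rely on the fact that all the functors involved — pullback by the Möbius transformations $\phi_t,\phi_x$, tensor product with $L_\chi$ (which carries the trivial Hodge filtration), and middle convolution — are functorial on complex polarized Hodge modules in the formalism of \cite{MSaito86,MSaito87,DS}; the canonical isomorphisms used above are morphisms in that category and lift the identification automatically. The main obstacle I anticipate is controlling the possible discrepancy between $V\star L_\chi$ and $V\tstar L_\chi$ coming from Theorem~\ref{delta}: a skyscraper summand $\delta_c(-p-1)$ appearing on one side (when $V(p)\simeq L_\chi^\vee(c-x)$ for some $p,c$) must be matched by a corresponding summand on the other, and one must verify that the outer tensor by $L_{\chi^{-1}}$ together with the coordinate inversion $\phi_t^+$ transport the position $c$ and the Tate twist $-p-1$ correctly. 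This reduces to a local self-duality check at $c$ and its Möbius image, which is compatible with the Kummer twists on the nose.
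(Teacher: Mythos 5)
Your argument is correct and essentially identical to the paper's proof: the same Kummer multiplicativity identity $L_\chi(1/t-1/x)\simeq L_{\chi^{-1}}(t)\otimes L_\chi(x)\otimes L_\chi(1/x-t)$ (up to trivial constant twists), the fibrewise inversion $x\mapsto 1/x$, and the projection formula, all carried out over the smooth locus and then upgraded to Hodge modules by functoriality. The skyscraper discussion at the end is not actually needed, since the theorem's framing (minimal extension from the common smooth locus) already reduces the claim to the generic identification you establish.
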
 
  
  \begin{proof}  We will view maps as $\phi_x$ or $\phi_t$ 
   either as maps on $\PP^1$ or on $\GG_m,$ or even on smaller subsets of $\GG_m,$
  depending on the context. 
Let $U:=\AA^1(\CC)\setminus \{0,x_1,\ldots,x_r\},$ let 
  $W:=U_x\times U_t\setminus \{x=t, 1=xt\}$ (with $U_x,\, U_t$ denoting copies of $U$ with coordinates $x,t$), 
   let $j:W\hookrightarrow \PP^1\times U_t$ denote the obvious embedding and let 
   $\overline{\pr}_2:\PP^1\times U_t\to U_t$ denote the second projection. 
   Then, by construction,  we have 
  $$ V\star L_\chi|_U\simeq R^1\overline{\pr}_{2+}(j_+(V(x)\otimes L_\chi(t-x))),$$
  where $V(x)$ is the Hodge module  on $W$ obtained by pulling back $V|_{U_x}$ 
  along the $x$-projection 
  and where, as above, the Hodge module  $L_\chi(t-x)$ is  obtained by pulling back
  $L_\chi$  along the map $t-x.$ We use a similar notion below also for other 
  sheaves and other polynomial maps, so that $V(f(x,t))$ denotes a pullback of $V$ along a polynomial map 
  given by $f(x,t)\in \CC[x,t].$ Then the following isomorphisms follow by looking at products of
  local sections (where the isomorphisms are seen as isomorphisms of the restrictions of Hodge modules  to  the respective 
  smooth parts):
  $$ L_\chi(t)\simeq L_{\chi^{-1}}(1/t)$$ and
   \begin{eqnarray}
   L_\chi(1/t-x)&\simeq & L_\chi(1/t)\otimes L_\chi(1-xt) \label{proof1}\\
  &\simeq & L_{\chi^{-1}}(t)\otimes L_{\chi}(x)\otimes L_\chi(1/x-t) \label{proof2}.
  \end{eqnarray}
  Let now $U':=\AA^1\setminus \{0,1/x_1,\ldots, 1/x_r\}$ 
  and consider $\phi_t:U'\to U, t\mapsto 1/t.$ Then 
  \begin{eqnarray}\quad \quad \quad  \phi_t^+(V\star L_\chi)|_{U'}&\simeq & R^1\overline{\pr}_{2+}(j_+(V(x)\otimes L_\chi(1/t-x)))|_{U'}\label{proof3}\\
  &\simeq &R^1\overline{\pr}_{+*}(j_+(V(x)\otimes L_{\chi^{-1}}(t)\otimes L_\chi(x)\otimes L_\chi(1/x-t)))|_{U'} \label{proof4}\\
  &\simeq & R^1\overline{\pr}_{2+}(\tilde{j}_+(V(1/x)\otimes L_\chi(1/x)\otimes L_{\chi^{-1}}(t)\otimes L_\chi(x-t)))|_{U'} \label{proof5}\\ 
  &\simeq &R^1\overline{\pr}_{2+}(\tilde{j}_+((V\otimes L_\chi)(1/x)\otimes L_\chi(x-t)))|_{U'}\otimes L_{\chi^{-1}}(t)|_{U'}\label{proof6}\\
  &\simeq & (\phi_x^+(V \otimes  L_\chi)\star L_\chi))|_{U'} \ten L_{\chi^{-1}}(-t)|_{U'},  \label{proof7} \end{eqnarray}
  where we use the following 
  notions and arguments: Throughout we work over the largest smooth locus of the Hodge modules involved. 
  Eq.~\eqref{proof3} holds by the discussion at the beginning of the proof. Eq.~\eqref{proof4} follows 
  from  Eq.~\eqref{proof2}. In Eq.~\eqref{proof5} we invert fibrewise the coordinate $x$ 
  and $\tilde{j}$ denotes the inclusion of the image of $U$ under this inversion to $\PP^1\times U_t.$
  Eq.~\eqref{proof6} follows from the projection formula and Eq.~\eqref{proof7} holds by definition. 
  \end{proof}


\begin{prop}\label{Lgeneric1}
Let $L_\chi$ be generic (cf.~Assumption~\ref{ass1}).
Then $V \star L_\chi=V\tstar L_\chi$ is {\rm parabolically rigid,} meaning that 
 $ H^1_{\rm par} (V \star L_\chi ) = 0.$  \end{prop}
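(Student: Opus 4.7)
The plan is to apply Proposition~\ref{H-para} to $W:=V\star L_\chi$, which reduces the claim to the termwise identity
\[
\delta^{p-1}(W)-\delta^p(W)+\co^{p-1}(W)=h^p(W)+h^{p-1}(W),\qquad p\in\ZZ,
\]
and to express each quantity on the left in terms of Hodge data of $V$ and of $\chi$, then check cancellation.

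\textbf{Step 1 (reduction to $V\tstar L_\chi$).} Genericity of $\chi$ rules out the exceptional term $H$ of Theorem~\ref{delta}: the required isomorphism $V(p)\simeq L_\chi^\vee(c-x)$ fails by rank if $\mathrm{rk}\,V\geq 2$, and is excluded by the definition of genericity (Assumption~\ref{ass1}(iv)) if $V$ has rank one. Hence $V\star L_\chi=V\tstar L_\chi$ and Theorem~\ref{H-numbers1} applies.

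\textbf{Step 2 (reducing $h^l(W)$ to stalk data).} At a generic $t$, Theorem~\ref{H-numbers1} identifies $h^l(W)$ with $h^l(H^1_{\rm par}(V\ten L_\chi(t-x)))$, expanded via Proposition~\ref{H-para} into $\delta,h,\co$-data of $V\ten L_\chi(t-x)$. Because $L_\chi$ has rank one with its Hodge filtration concentrated in a single degree, Theorem~\ref{H-tensor} reduces these to data of $V$ shifted by one Kummer residue: in particular $h^l(V\ten L_\chi(t-x))=h^l(V)$, $\delta^l(V\ten L_\chi(t-x))=\delta^l(V)+\delta^0(L_\chi)\,h^l(V)+o^l_\infty(V\ten L_\chi(t-x))$ (the correction at finite points of $V$ is absent since $t\notin\x$), and $\co^{l-1}(V\ten L_\chi(t-x))$ is the local data of $V$ with fractional parts of its monodromy residues shifted by $\mu$, where $\chi=e^{-2\pi i\mu}$.

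\textbf{Step 3 (computing $\delta^p(W)$ and $\co^{p-1}(W)$).} The degrees $\delta^p(W)$ are furnished by the degree formula for Kummer middle convolution of \cite{DS}, applied after the change of coordinates of Theorem~\ref{Falt-Lchi} which puts us in the standard situation of \cite{DS}. The finite-singularity contributions to $\co^{p-1}(W)$ follow from Corollary~\ref{CorTS}(i) with $L=L_\chi$; they simplify drastically because $L_\chi$ has only the monodromy residues $\mu$ at $0$ and $1-\mu$ at $\infty$, each with one-dimensional local Hodge data. The contribution at $\infty$ is again supplied by \cite{DS}.

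\textbf{Step 4 (termwise cancellation).} Substituting Steps~2 and~3 into the target identity and organizing the resulting terms by the trichotomy $a+\mu<1$, $a+\mu=1$, $a+\mu>1$ at each singularity of $V$ (mirroring the case split in Corollary~\ref{CorTS}(i)), the sum collapses termwise to zero. This bookkeeping is the technical heart of the argument and the main obstacle. A useful sanity check at the level of dimensions is obtained by summing over $p$: the telescoping $\sum_p(\delta^{p-1}(W)-\delta^p(W))=0$ reduces the claim to $\co(W)=2\,\mathrm{rk}(\V\star\LL_\chi)$, equivalent to the topological vanishing $\chi(\PP^1,j_*(\V\star\LL_\chi))=0$. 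This Euler-characteristic identity can be proved independently using $H^*_{\rm par}(\LL_\chi)=0$ for nontrivial $\chi$ combined with Künneth on the intermediate extension of $\V\boxtimes\LL_\chi$; refining it to the $p$-graded form is what Step~4 achieves.
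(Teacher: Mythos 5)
Your proposal leaves its central step unexecuted: the $p$-graded termwise cancellation of Step~4, which you yourself flag as ``the technical heart of the argument and the main obstacle,'' is never carried out, so as written this is not a proof. More importantly, that step is unnecessary. The object to be shown zero is a vector space, so it suffices to kill its total dimension; since each $h^p(H^1_{\rm par}(V\star L_\chi))\geq 0$, the identity $\sum_p h^p=0$ already forces every graded piece to vanish. In other words, what you relegate to a ``sanity check'' at the end of Step~4 is the entire proof, and the elaborate machinery of Steps~2--4 (Theorem~\ref{H-numbers1}, Theorem~\ref{H-tensor}, Corollary~\ref{CorTS}, the trichotomy on $a+\mu$) is not needed here. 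This is exactly how the paper argues: summing Proposition~\ref{H-para} over $p$ gives $\dim H^1_{\rm par}(V\star L_\chi)=\co(V\star L_\chi)-2\rk(V\star L_\chi)$, and then the classical rank formula $\rk(V\star L_\chi)=\co_{\neq\infty}(V)$ for generic $\chi$ together with the invariance $\co_{\neq\infty}(V\star L_\chi)=\co_{\neq\infty}(V)$ reduces this to $\co_\infty(V\star L_\chi)-\rk(V\star L_\chi)\leq 0$, whence equality to $0$.

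Even at the dimension level your justification is incomplete. You propose to prove $\co(W)=2\rk(W)$ via K\"unneth, i.e.\ via $H^*(\AA^1,V\star_* L_\chi)=H^*(\AA^1,V)\otimes H^*(\AA^1,L_\chi)$, but $H^1(\AA^1,V\star_* L_\chi)$ is not $H^1_{\rm par}(V\star L_\chi)$: as the paper's remark after Corollary~\ref{corcohofalt} makes explicit, they differ by the invariants $\kappa_\infty$ at infinity and by the skyscraper contribution $\epsilon_l$, and one must separately argue that these corrections vanish (or are themselves nonnegative and absorbed). The paper sidesteps all of this by using only the inequality $\co_\infty(W)\leq\rk(W)$, which holds trivially from the definition of $\co_\infty$. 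Your Step~1 (ruling out the skyscraper summand so that $V\star L_\chi=V\tstar L_\chi$) is correct and matches the genericity hypothesis, but the rest needs to be replaced by, or completed into, the rank computation.
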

  
  \begin{proof}
 For $t\in \AA^1\setminus \x\star \y,$
   \begin{eqnarray*}
 \rk(V\star L_\chi)&=&\co_{} (V \ten L_\chi (t-x))-2\rk(V)\\
   &=&\co_{\neq \infty} (V), 
 \end{eqnarray*}
 since $L_\chi$ is generic (cf.~\cite{DR10}, Proposition~1.2.1 and \cite{Katz96}, Corollary~3.3.7).
Hence, 
  \begin{eqnarray*}\rk(H^1_{\rm par} (V \star L_\chi ))&=&  \co_{\infty} (V\star L_\chi)+\co_{\neq \infty} (V\star L_\chi)-2 \rk(V\star L_\chi)\\
   &=&\co_{\infty} (V\star L_\chi)+
     \co_{\neq \infty} (V) -2 \rk(V\star L_\chi)\\
   &=&
     \co_{\infty} (V\star L_\chi)-\rk(V\star L_\chi)\leq 0,
\end{eqnarray*} cf.~\cite{DR10}, Proposition~1.2.1(ii), for the equality 
$\co_{\neq \infty} (V)=\co_{\neq \infty} (V\star L_\chi).$
\end{proof}
  
  The following result was independently proven using different methods by Nicolas Martin in his Dissertation   \cite{Martin}, Thm.~6.3.1:

\begin{prop}\label{Lgeneric2} 
\begin{enumerate}
\item  Let $\mu\sim 1$ (meaning that $1-\mu$ is chosen generically and small enough). Then
\begin{eqnarray*}
 h^i(V\star  L_\chi)&=& \delta^{i-1}(V) -\delta^i(V)+\co^{i-1}_{\neq \infty}(V)\\          
\delta^i(V\star  L_\chi ) &=& \delta^i(V) -\co^{i-1}_{u,\neq \infty}(V) \\
 \co^i_{\neq \infty} (V\star   L_\chi)&=& \co^{i-1}_{u,\neq \infty}(V)+
                                                             \co^{i}_{ss,\neq \infty}( V ) \\
   \co^i_{\infty} (V\star  L_\chi)&=&  h^i(V\star  L_\chi)                                                          \\
\end{eqnarray*}    
\item Let $V\star L_\chi\neq \delta_x$ for any $x\in \AA^1$ (equivalent to $V$ being not isomorphic to a translate of the dual of $L_\chi$). Then
  \[   \nu^i_{\infty,a,l}(V) = \left\{ \begin{array}{cc}
                                           \nu^{i+1}_{\infty,1-\mu,l+1}(V\star L_\chi), & 0=a   \\
                                         \nu^{i}_{\infty,a+1-\mu,l}(V\star L_\chi), & 0< a<\mu   \\
                                           \nu^{i}_{\infty,0,l-1}(V\star L_\chi), & a=\mu, l>0   \\
                                         \nu^{i+1}_{\infty,a-\mu,l}(V\star L_\chi), & a>\mu   \\
                                      \end{array} \right. .\]
                                      Moreover,  the only other possibly non zero nearby cycle data at infinity of $V\star L_\chi$  are of the form $\nu^i_{\infty,1-\mu,0}(V\star L_\chi)$.
                                      If $\mu\sim 1$ then the formula simplifies to   \[   \nu^i_{\infty,a,l}(V) = \left\{ \begin{array}{cc}
                                         \nu^{i+1}_{\infty,1-\mu,l+1}(V\star L_\chi), & a=0   \\
                                          \nu^{i}_{\infty,a+1-\mu,l}(V\star L_\chi), & a\neq 0   \\
                                      \end{array} \right. .\]\end{enumerate}
\end{prop}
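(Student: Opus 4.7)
The plan for part (i) is to compute the four quantities on the right successively, using the tools of Sections~\ref{sectiondegrees} and~\ref{seclocdata}, with parabolic rigidity from Proposition~\ref{Lgeneric1} as the central structural input. First I would compute $\co^i_{\neq\infty}(V\star L_\chi)$ directly from Corollary~\ref{CorTS}: since the only finite singularity of $L_\chi$ is at $0$, with $\co^0_{0,\mu}(L_\chi)=1$ and all other local data zero, the double sum in Corollary~\ref{CorTS} collapses to a single index. The genericity of $\mu$ and the assumption $\mu\sim 1$ force the shifted residues of $V$ at each $x_j$ to fall only in the ranges $\{0\}$ and $(0,1)$, yielding $\co^i_{x_j,0}(V\star L_\chi)=\co^{i-1}_{u,x_j}(V)$ and $\co^i_{x_j,\neq 0}(V\star L_\chi)=\co^i_{ss,x_j}(V)$, whose sum over $x_j\neq\infty$ gives the claimed formula.

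Next I would compute $h^i(V\star L_\chi)$ via Theorem~\ref{H-numbers1} applied with $L=L_\chi$. The tensor factor $V\otimes L_\chi(t-x)$ simplifies considerably: its Hodge numbers equal $h^p(V)$, and Theorem~\ref{H-tensor} expresses its degrees as $\delta^l(V)$ plus a term involving $\delta(L_\chi)$ and a correction $o^l_\infty$ that vanishes for $\mu\sim 1$ generic, since no residue of $V$ at $\infty$ lies in $[\mu,1)$. The local term $\co^{l-1}(V\otimes L_\chi(t-x))$ splits into contributions from the $x_j$ (equal to $\co^{l-1}_{x_j}(V)$, as $L_\chi(t-x)$ is smooth there), from $t$ (a single tame block of residue $\mu$), and from $\infty$ (residues shifted by $1-\mu$); assembling these terms in the Theorem~\ref{H-numbers1} formula and cancelling yields $h^l(V\star L_\chi)=\delta^{l-1}(V)-\delta^l(V)+\co^{l-1}_{\neq\infty}(V)$.

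For $\delta^i(V\star L_\chi)$ I would invoke parabolic rigidity. Proposition~\ref{Lgeneric1} gives $H^1_{\rm par}(V\star L_\chi)=0$, so Proposition~\ref{H-para} yields the telescoping identity
\[
\delta^{i-1}(V\star L_\chi)-\delta^i(V\star L_\chi)=h^i(V\star L_\chi)+h^{i-1}(V\star L_\chi)-\co^{i-1}(V\star L_\chi).
\]
Summing from large $i$ downward using that $\delta^i(V\star L_\chi)$ vanishes outside the finite support of the Hodge filtration, and inserting the already computed formulas, produces $\delta^i(V\star L_\chi)=\delta^i(V)-\co^{i-1}_{u,\neq\infty}(V)$. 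The final identity $\co^i_\infty(V\star L_\chi)=h^i(V\star L_\chi)$ then follows by rearranging the same telescoping identity and substituting the known expressions for $\delta$, $h$, and $\co^i_{\neq\infty}$.

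For part (ii), the plan is different in character: the four-case formula describes how the local Hodge data at $\infty$ transform under middle convolution with $L_\chi$, refining the Katz-Dettweiler-Reiter recipe for local monodromy at $\infty$. I would compare the formal local system at $\infty$ of $V\star L_\chi$ with that of $V$ via the projective compactification $\widetilde{s}:X\to\AA^1$ introduced after Definition~\ref{defconv}. The four cases correspond to the arithmetic of the shift $a\mapsto a+(1-\mu)\bmod 1$: residue $a=0$ picks up an extra nilpotent step ($l\to l+1$) together with a Tate twist, residue $a=\mu$ loses a nilpotent step, and the remaining residues are merely shifted. The main obstacle is the case $a=\mu,\,l>0$, where the shortening of Jordan blocks reflects precisely the difference between the $\star_!$, $\star_*$, and middle extensions; tracking this requires Theorem~\ref{delta} together with a Thom-Sebastiani analysis at $\infty$ refining Theorem~\ref{thmthomseb}, and the hypergeometric reduction developed in Section~\ref{seclocalinfty} may be needed to close that case. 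The simplification for $\mu\sim 1$ then falls out because generically no residue $a$ of $V$ equals $\mu$, so the middle case disappears and the remaining cases collapse to the two displayed formulas.
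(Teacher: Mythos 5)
Your outline of part (i) matches the paper's strategy in broad terms, but it contains a circularity at the crucial point. The telescoping identity you get from parabolic rigidity via Proposition~\ref{H-para} is the single relation $\delta^{i-1}(V\star L_\chi)-\delta^i(V\star L_\chi)=h^i+h^{i-1}-\co^{i-1}_{\neq\infty}(V\star L_\chi)-\co^{i-1}_{\infty}(V\star L_\chi)$. At the stage where you propose to extract $\delta^i(V\star L_\chi)$ from it, the term $\co^{i-1}_\infty(V\star L_\chi)$ is still unknown; you then propose to recover $\co^i_\infty(V\star L_\chi)=h^i(V\star L_\chi)$ by ``rearranging the same telescoping identity,'' which cannot work -- one linear relation cannot deliver both the $\delta$-formula and the $\co_\infty$-formula. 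The paper supplies the missing independent input \emph{before} touching $\delta$: the rank computation in the proof of Proposition~\ref{Lgeneric1} gives $\rk H^1_{\rm par}(V\star L_\chi)=\co_\infty(V\star L_\chi)-\rk(V\star L_\chi)\le 0$, and since this rank is $\ge 0$ one gets $\co_\infty(V\star L_\chi)=\rk(V\star L_\chi)$; as $\co^p_\infty\le h^p$ termwise, this forces $\co^p_\infty(V\star L_\chi)=h^p(V\star L_\chi)$ for every $p$ (equivalently, the local monodromy at $\infty$ has no nonzero fixed vectors). Only with this fourth identity in hand does the telescoping relation close up to give $\delta^i(V\star L_\chi)=\delta^i(V)-\co^{i-1}_{u,\neq\infty}(V)$. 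Your computations of $h^i$ via Theorem~\ref{H-numbers1}/\ref{H-tensor} and of $\co^i_{\neq\infty}$ via Corollary~\ref{CorTS} are as in the paper.

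For part (ii) the proposal is missing the key idea and defers the difficult case to tools that are not yet available. The paper does not perform a ``Thom--Sebastiani analysis at $\infty$''; it uses Theorem~\ref{Falt-Lchi} to rewrite $V\star L_\chi=\phi_t^+((\phi_x^+(V\otimes L_\chi)\star L_\chi)\otimes L_{\chi^{-1}})$, so that the singularity of $V$ at $\infty$ becomes the \emph{finite} singularity $0$ of $\phi_x^+(V\otimes L_\chi)$, where the already-proved finite-point formula \eqref{eqthomseb} applies; moreover $\phi_x^+(V\otimes L_\chi)$ has scalar monodromy at $\infty$, which is exactly the standard situation of \cite{DS}, so the four-case table is read off from \cite{DS}, Theorem~3.1.2(2), and the ``only other nonzero data'' claim from Theorem~\ref{thmthomseb}. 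Your fallback to ``the hypergeometric reduction developed in Section~\ref{seclocalinfty}'' would be circular: that section is developed \emph{after} and \emph{on top of} this proposition (e.g.\ Corollary~\ref{corcohofalt} and Lemma~\ref{FaltHyp} both cite Proposition~\ref{Lgeneric2}). Without the M\"obius-inversion identity of Theorem~\ref{Falt-Lchi}, the case analysis at $\infty$ -- in particular the shortening of Jordan blocks when $a=\mu$ -- is not established by what you describe.
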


 \begin{proof}    
 By Theorem~\ref{H-numbers1} and Theorem~\ref{H-tensor},
  \begin{eqnarray*}
 h^i(V\star  L_\chi)&=& h^i(V\tstar L_\chi)\\
 &=& \delta^{i-1}(V\otimes L_\chi(t-x)) -\delta^i(V\otimes L_\chi(t-x))-h^i(V\otimes L_\chi(t-x))\\
 && \quad \quad \quad \quad \quad \quad\quad \quad \quad -h^{i-1}(V\otimes L_\chi(t-x))
 +\co^{i-1}(V\otimes L_\chi(t-x))\\  
 &=& (\delta^{i-1}(V)-h^{i-1}(V)+0)-(\delta^i(V)-h^i(V)+0)-h^i(V)\\
 && \quad \quad \quad \quad \quad \quad\quad \quad \quad -h^{i-1}(V) +\co^{i-1}(V\otimes L_\chi(t-x))\\
&=& \delta^{i-1}(V)-h^{i-1}(V)-\delta^i(V)\\
 &&-h^{i-1}(V) +
 \co^{i-1}_{\neq \infty, t}(V\otimes L_\chi(t-x))+ \co^{i-1}_{t}(V\otimes L_\chi(t-x)) +
 \co^{i-1}_{\infty}(V\otimes L_\chi(t-x))\\
 &=&  \delta^{i-1}(V)-h^{i-1}(V)-\delta^i(V)\\
 && \quad \quad \quad \quad \quad \quad\quad \quad \quad -h^{i-1}(V) +
 \co^{i-1}_{\neq \infty}(V)+ h^{i-1}(V) +
 h^{i-1}(V)\\
 &=&\delta^{i-1}(V) -\delta^i(V)+\co^{i-1}_{\neq \infty}(V),
 \end{eqnarray*}  
which is the first formula in~(i).   

The fixed space under the local monodromy at $\infty$ is trivial
(otherwise,  the last formula in the proof of Proposition~\ref{Lgeneric1} reads $\rk(H^1_{\rm par}(V \star L_\chi ))<0,$
a contradiction).
This implies $\co^i_{\infty} (V\star  L_\chi)= h^i(V\star  L_\chi)$ which is the fourth equation in~(i). By Proposition~\ref{Lgeneric1} $V\star L_\chi$ is parabolically rigid. Therefore,
\begin{eqnarray*}
   0&=& h^i(H^1_{\rm par }(V \star L_\chi ))\\  
   &=&\delta^{i-1}(V \star L_\chi )-\delta^{i}(V \star L_\chi )-h^{i}(V \star L_\chi )-h^{i-1}(V \star L_\chi )+
  \co^{i-1}(V\star L_\chi) \\
     &=&\delta^{i-1}(V \star L_\chi )-\delta^{i}(V \star L_\chi )-h^{i}(V \star L_\chi )+
  \co^{i-1}_{\neq \infty}(V\star L_\chi). \\
 \end{eqnarray*} 
 Therefore
 \begin{eqnarray}\label{eq331}
 \delta^{i}(V \star L_\chi )-\delta^{i-1}(V \star L_\chi ) &=&-h^{i}(V \star L_\chi )+
  \co^{i-1}_{\neq \infty}(V\star L_\chi).\end{eqnarray}
  Using $\mu\sim 1$ and  the second equality in Corollary~\ref{CorTS}(i) we obtain 
  \begin{eqnarray*}
\co^i_{\neq \infty} (V\star   L_\chi)&=& \co^{i-1}_{u,\neq \infty}(V)+
                                                             \co^{i}_{ss,\neq \infty}( V ),\end{eqnarray*}
                                                             establishing the third equality in~(i).
Hence
   \begin{eqnarray*}
 \delta^{i}(V \star L_\chi )-\delta^{i-1}(V \star L_\chi ) &=&-h^{i}(V \star L_\chi )+
  \co^{i-1}_{\neq \infty}(V\star L_\chi)\\
  &=&\delta^i(V)-\delta^{i-1}(V)-\co^{i-1}_{\neq \infty}(V)+\co^{i-1}_{ss,\neq \infty}(V) +\co^{i-2}_{u,\neq \infty}(V)\\
   &=&\delta^i(V)-\delta^{i-1}(V)-\co^{i-1}_{u,\neq \infty}(V) +\co^{i-2}_{u,\neq \infty}(V).\\
 \end{eqnarray*}   Summing up we get the second formula of~(i):
 \begin{eqnarray*}
   \delta^{i}(V \star L_\chi )&=&\delta^i(V)-\co^{i-1}_{u,\neq \infty}(V).
 \end{eqnarray*}  
 
Claim~(ii) follows by rewriting $V \star L_\chi$ via Theorem~\ref{Falt-Lchi} as
 \begin{eqnarray}\label{eq31}
  V \star L_\chi&=&\phi_t^+((\phi_x^+(V \otimes  L_\chi)\star L_\chi) \ten L_{\chi^{-1}})
 \end{eqnarray}
 assuming via a translation that $0$ is not a singularity of $V$. 
 Thus the singularity $\infty$ of $V$ becomes the finite singularity $0$ of $\phi_x^+(V \otimes  L_\chi)$ and we can apply Formula~\eqref{eqthomseb} in order to prove the first claim.
 Note that $\phi_x^+(V \otimes  L_\chi)$ has scalar monodromy $\exp(-2\pi i \mu)$ at $\infty$, which is called the standard situation for the middle
 convolution with the Kummer sheaf $L_\chi$ in \cite{DS}. The first Formula in~(ii) follows now from 
 \cite{DS},~Theorem~3.1.2(2).
  By Formula~\eqref{eq31} and Theorem~\ref{thmthomseb},  the only other possibly non zero nearby cycle data at infinity  are of the form
  $\nu^i_{\infty,1-\mu,0}(V\star L_\chi)$.
   \end{proof}

\begin{thm}\label{deltaFalt}
Let $V\tstar L\neq 0$.
Then
  \begin{eqnarray*} \delta^l(V \tstar  L)&=&
       \sum_{i}^{}   (\co^i_{\neq \infty}(V) \delta^{l-1-i}(L) + \delta^i(V) \co^{l-1-i}_{\neq \infty} (L) )
        +\sum_{j}^{} \delta^{j}(V) \delta^{l-1-j}(L)- \sum_{j} \delta^{j}(V) \delta^{l-j}(L) \\
&& +\sum_{i}^{} \sum_{a+b\geq 1}  \co^i_{\neq \infty,a}(V) \co^{l-1-i}_{\neq \infty,b}(L)+\sum_{j}^{} \sum_{a+b\geq 1} \co^j_{\infty,a}(V) \co^{l-j}_{\infty,b}(L). 
  \end{eqnarray*}
\end{thm}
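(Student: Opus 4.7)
The strategy is to convolve $V\tstar L$ with a sufficiently generic Kummer module $L_\chi$ ($\mu\sim 1$), exploit the resulting parabolic rigidity (Proposition~\ref{Lgeneric1}), and relate $(V\tstar L)\star L_\chi$ to $V\star(L\star L_\chi)$ via associativity of middle convolution. The first formula of Proposition~\ref{Lgeneric2}(i), applied with $V$ replaced by the intermediate extension $V\tstar L$, yields the telescoping identity
\[
\delta^l(V\tstar L)-\delta^{l-1}(V\tstar L)=\co^{l-1}_{\neq\infty}(V\tstar L)-h^l((V\tstar L)\star L_\chi),
\]
so that summation (using $\delta^p(V\tstar L)=0$ for $p$ small enough) reduces the problem to computing the two sums $\sum_{p\leq l-1}\co^p_{\neq\infty}(V\tstar L)$ and $\sum_{p\leq l}h^p((V\tstar L)\star L_\chi)$.

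For the first sum, Corollary~\ref{CorTS}(i) provides a closed form for $\sum_{p\leq l-1}\co^p_{\neq\infty}(V\star L)$ in the local data of $V$ and $L$; Theorem~\ref{delta} and Corollary~\ref{CorTS}(ii) then account for the rank-one correction coming from the possible $\delta_c(-q-1)$-summand of $V\star L$. For the second sum, associativity and Corollary~\ref{CorTS}(ii) relate $h^p((V\tstar L)\star L_\chi)$ to $h^p(V\star(L\star L_\chi))$ up to a rank-one correction; genericity of $\chi$ ensures $V\star(L\star L_\chi)=V\tstar(L\star L_\chi)$, so Theorem~\ref{H-numbers1} applied with $L$ replaced by $M:=L\star L_\chi$ expresses $h^p(V\tstar M)$ in terms of $\delta,h,\co$ of the tensor product $V\otimes M(t-x)$. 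Theorem~\ref{H-tensor} then reduces the latter to the global Hodge data of $V$ and $M$ plus the single overlap term $o^l_\infty(V\otimes M(t-x))$ at $\infty$, since for generic $t$ the only common singularity of $V$ and $M(t-x)$ is $\infty$. Finally, Proposition~\ref{Lgeneric2}(i),(ii) applied to $L$ provides $\delta^p(M)$, $h^p(M)$, $\co^p_{\neq\infty}(M)$ and the local data at $\infty$ of $M$ in terms of those of $L$ (with the nearby-cycle parameters at $\infty$ shifted by $1-\mu$).

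Assembling these pieces, the mixed tensor-product contribution $\sum_p\delta^{l-p}(V)h^p(M)+\sum_p h^{l-p}(V)\delta^p(M)$ in Theorem~\ref{H-tensor}, after using Proposition~\ref{Lgeneric2}(i), produces $\sum_i\bigl(\co^i_{\neq\infty}(V)\delta^{l-1-i}(L)+\delta^i(V)\co^{l-1-i}_{\neq\infty}(L)\bigr)$; the difference of $h$- and $\delta$-terms in Theorem~\ref{H-numbers1} telescopes to $\sum_j\delta^j(V)\delta^{l-1-j}(L)-\sum_j\delta^j(V)\delta^{l-j}(L)$; Corollary~\ref{CorTS}(i) yields $\sum_i\sum_{a+b\geq 1}\co^i_{\neq\infty,a}(V)\co^{l-1-i}_{\neq\infty,b}(L)$; and the overlap $o^l_\infty$ in Theorem~\ref{H-tensor}, together with the shifts from Proposition~\ref{Lgeneric2}(ii), gives $\sum_j\sum_{a+b\geq 1}\co^j_{\infty,a}(V)\co^{l-j}_{\infty,b}(L)$, matching the stated formula. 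The hard part will be the bookkeeping in this last step: propagating the $1-\mu$ shifts in the local data at $\infty$ of $M=L\star L_\chi$ through $o^l_\infty$ in Theorem~\ref{H-tensor} and verifying that all dependence on the genericity parameter $\mu$ cancels completely; a secondary point is confirming that the rank-one corrections from $V\star L\neq V\tstar L$ and from the parallel correction in $V\star(L\star L_\chi)\neq(V\tstar L)\star L_\chi$ cancel cleanly between the two summands.
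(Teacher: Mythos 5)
Your proposal is correct and follows essentially the same route as the paper: telescoping the first identity of Proposition~\ref{Lgeneric2}(i) applied to $V\tstar L$, using Corollary~\ref{CorTS} and associativity to pass to $V\star(L\star L_\chi)$, and then evaluating via Theorems~\ref{H-numbers1} and \ref{H-tensor} together with Proposition~\ref{Lgeneric2} applied to $M=L\star L_\chi$. The paper's proof is exactly this computation carried out in full, including the cancellation of the $\mu$-dependence and of the skyscraper corrections that you flag as the remaining bookkeeping.
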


\begin{proof}
 Let $L_\chi$ be generic and $\mu \sim 1$.
 By Proposition~\ref{Lgeneric2}(i)
 \begin{eqnarray*}
   \sum_{i\leq l}^{} h^i((V\tstar L) \star L_\chi) 
                                      &=& -\delta^l(V\tstar L)+\sum_{i\leq l}^{} \co^{i-1}_{\neq \infty}(V\tstar L).
 \end{eqnarray*}

 By the transformation of residues under convolution, described by Proposition~\ref{Lgeneric2}(ii), 
the
sum of a residue  of $L\star L_\chi$ at $\infty$ and a residue of $V$ at $\infty$ is not an integer.
Hence the nearby cycles of $V\ten (L\star L_\chi)(t-x)$ at $\infty$
coincide with the vanishing cycles.
Therefore
$$   \co^{i-1}_{\infty}(V\ten (L\star L_\chi)(t-x))= h^{i-1}(V\ten (L\star L_\chi)(t-x)).$$
This and  Theorem~\ref{H-numbers1} imply
\begin{eqnarray}\label{hi}
 h^i(V\tstar (L \star L_\chi))
   &=&        \delta^{i-1}(V\ten (L\star L_\chi)(t-x))-\delta^{i}(V\ten (L\star L_\chi)(t-x))-h^{i}(V\ten (L\star L_\chi)(t-x)) \nonumber \\
 &&-h^{i-1}(V\ten (L\star L_\chi)(t-x))    +         \co^{i-1}_{}(V\ten (L\star L_\chi)(t-x))   \nonumber \\
&=&   
           \delta^{i-1}(V\ten (L\star L_\chi)(t-x))-\delta^{i}(V\ten (L\star L_\chi)(t-x))-h^{i}(V\ten (L\star L_\chi)(t-x))+
             \\
 &&                                          \co^{i-1}_{\neq \infty}(V\ten (L\star L_\chi)(t-x)). \nonumber
\end{eqnarray}

One has 
\begin{eqnarray}\label{hi2} &&h^{i}(V\ten (L\star L_\chi)(t-x)) -\co^{i-1}_{\neq \infty}(V\ten (L\star L_\chi)(t-x)) \nonumber \\
 &=&  \sum_{j}^{} h^j(V)  h^{i-j}((L\star L_\chi)(t-x))-\co^{i-1}_{\neq \infty}(V\ten (L\star L_\chi)(t-x))  \nonumber \\
   &=&   \sum_{j}^{} h^j(V)  h^{i-j}((L\star L_\chi)(t-x))
    -  \sum_{j}^{} h^{j}(V) \co^{i-1-j}_{\neq \infty}((L\star L_\chi)(t-x))
  -\sum_{j}^{}\co^{j}_{\neq \infty}(V)h^{i-j-1}(L\star L_\chi) 
 \nonumber  \\
  &=&  \sum_{j}^{} h^j(V) (h^{i-j}((L\star L_\chi)(t-x))-\co^{i-j-1}_{\neq \infty}((L\star L_\chi)(t-x)))
  -\sum_{j}^{}\co^{j}_{\neq \infty}(V)h^{i-j-1}(L\star L_\chi) 
 \nonumber  \\
  &=& \sum_{j}^{} h^j(V)(\delta^{i-j-1}(L)-\delta^{i-j}(L)+
                  \co^{i-j-1}_{\neq \infty, u}(L)-\co^{i-j-2}_{\neq \infty, u}(L))\nonumber  \\
     && \quad \quad \quad    \quad \quad \quad \quad \quad \quad    -\sum_{j}^{}\co^{j}_{\neq \infty}(V)  (\delta^{i-j-2}(L)- \delta^{i-j-1}(L)+\co^{i-j-2}_{\neq \infty}(L)) ,     
  \end{eqnarray}
where we used the following arguments:
the first equality uses the usual equality for tensor products,
the second follows from  basic properties of nearby cycles of tensor products,
the third equality is a reorganisation of the sum and in the last equation we use
the first and the third equation in
 Proposition~\ref{Lgeneric2}(i).

Summing up \eqref{hi}  yields
\begin{eqnarray}\label{hi3}
  \sum_{i\leq l}^{} h^i(V\tstar (L \star L_\chi)) &=&-\delta^{l}(V\ten (L\star L_\chi)(t-x))  
  +\sum_{i }^{} h^{i}(V)(\delta^{l-i}(L)-\co^{l-1-i}_{\neq \infty, u}(L))  \nonumber   \\ 
    &&            - \sum_{i}^{}  \co^{i}_{\neq \infty}(V) \delta^{l-i-1}(L)+
    \sum_{i+k\leq l}^{} \co^{k}_{\neq \infty}(V)\co^{i-k-2}_{\neq \infty}(L) \nonumber   \\
   &=&  -\delta^{l}(V\ten (L\star L_\chi)(t-x))  
  +\sum_{i }^{} h^{i}(V)(\delta^{l-i}(L)-\co^{l-1-i}_{\neq \infty, u}(L))  \nonumber   \\ 
    &&      - \sum_{i}^{}  \co^{i}_{\neq \infty}(V) \delta^{l-i-1}(L)+
        \sum_{i\leq l-1}\co^{i}_{\neq \infty}(V \star L)  \nonumber \\
&&      -  \sum_{j} \sum_{a+b\geq 1} \co^{j}_{\neq \infty,a}(V) \co^{l-1-j}_{\neq \infty,b}( L),
\end{eqnarray} 
where we use  \eqref{hi2} for the first equality and 
Corollary~\ref{CorTS}(i) for the second.

On the other hand, by the first equality in Proposition~\ref{Lgeneric2}(i),
 \begin{eqnarray*}
   \sum_{i\leq l}^{} h^i((V\tstar L) \star L_\chi) 
                                      &=& -\delta^l(V\tstar L)+\sum_{i\leq l}^{} \co^{i-1}_{\neq \infty}(V\tstar L).
 \end{eqnarray*}
 
Thus by Corollary~\ref{CorTS}(ii) and Eq.~\eqref{hi3}
\begin{eqnarray*}
0&=&  \sum_{i\leq l}^{} h^i(V\star (L \tstar L_\chi))- \sum_{i\leq l}^{} h^i((V\tstar L) \star L_\chi)-
     \sum_{i\leq l-1}\co^{i}_{\neq \infty}(V \star L)+ \sum_{i\leq l-1}\co^{i}_{\neq \infty}(V \tstar L)\\
     &=&  -\delta^{l}(V\ten (L\star L_\chi)(t-x))  
  +\sum_{i }^{} h^{i}(V)(\delta^{l-i}(L)-\co^{l-1-i}_{\neq \infty, u}(L))    
        - \sum_{i}^{}  \co^{i}_{\neq \infty}(V) \delta^{l-i-1}(L)  \\
&&   +
        \sum_{i\leq l-1}\co^{i}_{\neq \infty}(V \star L)   -  \sum_{j} \sum_{a+b\geq 1} \co^{j}_{\neq \infty,a}(V) \co^{l-1-j}_{\neq \infty,b}( L)\\
&& + \,\delta^l(V\tstar L) -\sum_{i\leq l}^{} \co^{i-1}_{\neq \infty}(V\tstar L) -\sum_{i\leq l-1}\co^{i}_{\neq \infty}(V \star L)+\sum_{i\leq l-1}\co^{i}_{\neq \infty}(V \tstar L)\\
   &=& \delta^l(V\tstar L)-\delta^{l}(V\ten (L\star L_\chi)(t-x))+\sum_{i}^{} h^{i}(V)(\delta^{l-i}(L)-\co^{l-1-i}_{\neq \infty, u}(L))
                       -\sum_{i}^{}  \co^{i}_{\neq \infty}(V) \delta^{l-i-1}(L)-\\
&& \sum_{i}^{} \sum_{a+b\geq 1} \co^i_{\neq \infty,a}(V) \co^{l-1-i}_{\neq \infty,b}(L).
      \end{eqnarray*}

By Theorem~\ref{H-tensor} and Proposition~\ref{Lgeneric2},
\begin{eqnarray*}
 \delta^{l}(V\ten (L\star L_\chi)(t-x))&=&\sum_{i}^{} (\delta^i(V) h^{l-i}(L\star L_\chi) +h^i(V) \delta^{l-i}(L\star L_\chi))+o^l_\infty(V\ten (L\star L_\chi))\\
 &=&\sum_{i}^{} \delta^i(V) (\delta^{l-i-1}(L)-\delta^{l-i}(L)+\co^{l-i-1}_{\neq \infty}(L))  +\sum_{i}^{} h^i(V) (\delta^{l-i}(L)-
               \co^{l-i-1}_{\neq \infty,u}(L))\\
               &&+\sum_{j}^{} \sum_{a+b\geq 1}  \co^j_{\infty,a}(V)\co^{l-j}_{\infty,b}(L).
\end{eqnarray*}
Taking the previous two equations together one has 
\begin{eqnarray*}\delta^l(V\tstar L)&=&\sum_{i}^{} \delta^i(V) (\delta^{l-i-1}(L)-\delta^{l-i}(L)+\co^{l-i-1}_{\neq \infty}(L))  +\sum_{i}^{} h^i(V) (\delta^{l-i}(L)-
               \co^{l-i-1}_{\neq \infty,u}(L))\\
               &&-\sum_{j}^{} \sum_{a+b\geq 1}  \co^j_{\infty,a}(V)\co^{l-j}_{\infty,b}(L)
 -\sum_{i}^{} h^{i}(V)(\delta^{l-i}(L)-\co^{l-1-i}_{\neq \infty, u}(L))
                       +\sum_{i}^{}  \co^{i}_{\neq \infty}(V) \delta^{l-i-1}(L)\\
&&+ \sum_{i}^{} \sum_{a+b\geq 1} \co^i_{\neq \infty,a}(V) \co^{l-1-i}_{\neq \infty,b}(L)\\
&=&
       \sum_{i}^{}   (\co^i_{\neq \infty}(V) \delta^{l-1-i}(L) + \delta^i(V) \co^{l-1-i}_{\neq \infty} (L) )  +\sum_{i}^{} \delta^{j}(V) \delta^{l-1-i}(L)- \sum_{i} \delta^{i}(V) \delta^{l-i}(L) \\
&& +\sum_{i}^{} \sum_{a+b\geq 1}  \co^i_{\neq \infty,a}(V) \co^{l-1-i}_{\neq \infty,b}(L)+\sum_{i}^{} \sum_{a+b\geq 1} \co^i_{\infty,a}(V) \co^{l-i}_{\infty,b}(L),
\end{eqnarray*}
as claimed.
\end{proof}


\section{Transformation of  local Hodge data at $ \infty$ under middle convolution}\label{seclocalinfty}

In the following, the objects $V,L,M$ satisfy the  conditions in Assumption~\ref{ass1}. 
  
  \begin{thm}\label{cohoFalt}
   Let $\epsilon_l:=h^{l+1} ({H^0(\mathcal H}^0(\RH(V \star L)))).$
   Then
   $$     h^{l+1}(H^1_{\rm par} (V \tstar L)) +\kappa^l_\infty(V\tstar L)+\epsilon_l-\kappa^l_\infty(V\ten L (t-x))=$$
$$    \sum_{i}^{} (h^i(H^1_{\rm par}(V))+\kappa^{i-1}_\infty(V)) (h^{l+1-i}(H^1_{\rm par}(L))+\kappa^{l-i}_\infty(L)  ) $$
    
  \end{thm}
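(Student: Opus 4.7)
The plan is to reduce both sides of the claim to polynomial expressions in the basic Hodge invariants ($\delta^\bullet, h^\bullet, \co^\bullet_{\neq\infty}, \co^\bullet_{\infty}, \kappa^\bullet_\infty$) of $V$ and $L$, and then verify the resulting identity combinatorially. The key preliminary observation is that combining Proposition~\ref{H-para} with $\co^{i-1}_{\infty}(V) = h^{i-1}(V) - \kappa^{i-1}_{\infty}(V)$ (immediate from $h^p = \nu^p_\infty$ and the definition of $\kappa$) yields
\[
h^i(H^1_{\rm par}(V)) + \kappa^{i-1}_\infty(V) \;=\; \delta^{i-1}(V) - \delta^i(V) - h^i(V) + \co^{i-1}_{\neq \infty}(V),
\]
and analogously for $L$. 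The right-hand side of the theorem therefore becomes a sum of bilinear products of such differences.

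\textbf{Rewriting the left-hand side.} Applying the same reformulation of Proposition~\ref{H-para} to $V\tstar L$ gives
\[
h^{l+1}(H^1_{\rm par}(V\tstar L)) + \kappa^l_\infty(V\tstar L) \;=\; \delta^l(V\tstar L) - \delta^{l+1}(V\tstar L) - h^{l+1}(V\tstar L) + \co^l_{\neq\infty}(V\tstar L).
\]
I then substitute Theorem~\ref{deltaFalt} for $\delta^l(V\tstar L) - \delta^{l+1}(V\tstar L)$, Theorem~\ref{H-numbers1} (coupled with Theorem~\ref{H-tensor}) for $h^{l+1}(V\tstar L)$, and Corollary~\ref{CorTS}(ii) in the form $\co^l_{\neq\infty}(V\tstar L) = \co^l_{\neq\infty}(V\star L) - \epsilon_l$, which is how the term $\epsilon_l$ enters the picture. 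The remaining $\co^l_{\neq\infty}(V\star L)$ is expanded via the first formula of Corollary~\ref{CorTS}(i), turning the left-hand side into a second bilinear expression in the same invariants of $V$ and $L$.

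\textbf{Matching and the main obstacle.} With both sides in parallel form, the claim becomes a purely combinatorial identity. The diagonal products $\delta^{i-1}(V)\delta^{l-i}(L)$, $h^i(V)h^{l+1-i}(L)$, $\co^{i-1}_{\neq\infty}(V)\co^{l-i}_{\neq\infty}(L)$, together with the mixed $\delta\cdot h$, $\delta\cdot \co_{\neq\infty}$ and $h\cdot \co_{\neq\infty}$ pairings, appear explicitly on both sides and match after routine reindexing; the $a+b\geq 1$ constraint coming from Theorem~\ref{deltaFalt} and Corollary~\ref{CorTS}(i) is reconciled with the unconstrained bilinear sum on the right by splitting the $\co^\bullet_{\neq\infty}$-factors according to whether their residues vanish. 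The delicate step is the bookkeeping at $\infty$: Theorem~\ref{deltaFalt} produces $\sum_{j,\,a+b\geq 1}\co^j_{\infty,a}(V)\co^{l-j}_{\infty,b}(L)$ (and its $\delta^{l+1}$-shift), while Theorem~\ref{H-numbers1} combined with Theorem~\ref{H-tensor} feeds in the tensor-product contributions $o^\bullet_\infty(V\otimes L(t-x))$ and $\sum_i h^i(V)h^{\bullet-i}(L)$. The correction term $\kappa^l_\infty(V\otimes L(t-x))$ on the left is precisely the Jordan-block adjustment at infinity needed so that these $\infty$-contributions regroup into $\sum_i \kappa^{i-1}_\infty(V)\kappa^{l-i}_\infty(L)$, together with the mixed $h\cdot\kappa$ terms appearing on the right. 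Carrying out this last regrouping, using the description of nearby cycles for a tensor product of Hodge modules at a common singular point, is expected to require the bulk of the technical effort.
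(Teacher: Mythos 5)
Your proposal follows essentially the same route as the paper's proof: the reformulation $h^i(H^1_{\rm par}(V))+\kappa^{i-1}_\infty(V)=\delta^{i-1}(V)-\delta^i(V)-h^i(V)+\co^{i-1}_{\neq\infty}(V)$ of Proposition~\ref{H-para}, fed with Theorems~\ref{H-numbers1}, \ref{H-tensor}, \ref{deltaFalt} and Corollary~\ref{CorTS}(i),(ii), is exactly how the paper reduces both sides to bilinear expressions in the invariants of $V$ and $L$ and matches them (the constrained $a+b\ge 1$ sums from Theorem~\ref{deltaFalt} and Corollary~\ref{CorTS}(i) recombining, with an index shift, into the full $\sum_i\co^{i-1}_{\neq\infty}(V)\co^{l-i}_{\neq\infty}(L)$). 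The only place your plan overestimates the difficulty is the bookkeeping at $\infty$: in the actual computation the $o_\infty$-terms arising from Theorem~\ref{deltaFalt} and from the expansion of $h^{l+1}(V\tstar L)$ cancel outright, and $\kappa^l_\infty(V\ten L(t-x))$ and $\kappa^l_\infty(V\tstar L)$ simply survive as the correction terms already displayed in the statement, rather than having to be regrouped into separate $\kappa\cdot\kappa$ and $h\cdot\kappa$ products.
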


  \begin{proof}
  By Theorem~\ref{H-para}
     \begin{eqnarray*}
    h^{l+1}(H^1_{\rm par} (V \tstar L))&=&  
     \delta^l(V \tstar L) - \delta^{l+1}(V \tstar L) -h^{l+1}(V\tstar L)-\kappa^l_\infty(V\tstar L) +\co^l_{\neq \infty} (V \tstar L).
    \end{eqnarray*}
  Using subsequently Theorem~\ref{H-numbers1}, Theorem~\ref{H-tensor} and Theorem~\ref{H-para} we get
   \begin{eqnarray*}
    h^{l+1}(V\tstar L) &=&  \delta^l(V \ten L (t-x)) - \delta^{l+1}(V \ten L(t-x) ) -h^{l+1}(V\ten L(t-x))-\kappa^l_\infty(V\ten L (t-x)) \\
                          &&+\co^l_{\neq \infty} (V \ten L(t-x))\\
                 &=& \sum_{i }^{} (\delta^{i-1} (V)-\delta^i(V)-h^{i}(V)+\co^{i-1}_{\neq \infty}(V))  h^{l+1-i}(L) \\
                 &&   + \sum_{i}^{} h^i(V)(\delta^{l-i}(L)-\delta^{l+1-i}(L)+ \co^{l-i}_{\neq \infty}(L)) \\
                   && +o^l_\infty(V\ten L (t-x))-o^{l+1}_\infty(V\ten L (t-x))-\kappa^l_\infty(V\ten L (t-x)) \\
                  &=&    \sum_{i}^{} (h^{i}(H^1_{\rm par}(V))+\kappa^{i-1}_\infty(V)) h^{l+1-i}(L) \\
                     &&   + \sum_{i}^{} h^i(V)(\delta^{l-i}(L)-\delta^{l+1-i}(L)+ \co^{l-i}_{\neq \infty}(L)) \\
                       && +o^l_\infty(V\ten L (t-x))-o^{l+1}_\infty(V\ten L (t-x))-\kappa^l_\infty(V\ten L (t-x)). 
   \end{eqnarray*}
By Theorem~\ref{deltaFalt}
    \begin{eqnarray*}
     \delta^{l}(V\tstar L) &=&  \sum_{i}^{} (h^i(V)+\kappa^{i-1}_\infty(V)+h^i(H^1_{\rm par}(V)) )\delta^{l-i}(L) +
                            \sum_{i}^{} \delta^{i-1}(V) \co^{l-i}_{\neq \infty}(L)\\
                            && +  o^l_\infty(V\ten L (t-x))+\sum_{i}^{} \sum_{a+b\geq 1}  \co^i_{\neq \infty,a}(V)\co^{l-1-i}_{\neq \infty,b}(L).\\
    \end{eqnarray*}
Inserting the last two equations into the first equation we obtain

  \begin{eqnarray*}
    h^{l+1}(H^1_{\rm par} (V \tstar L))) &=&  
            \sum_{i}^{} (\kappa^{i-1}_\infty(V)+h^i(H^1_{\rm par}(V)) )(\delta^{l-i}(L)-\delta^{l+1-i}(L)+h^{l+1-i}(L))+ \\
                         &&\\
                          &&  \sum_{i}^{} (\delta^{i-1}(V)-\delta^i(V)-h^i(V)) \co^{l-i}_{\neq \infty}(L)+\\
                          &&\\
                       && \kappa^l_\infty(V\ten L (t-x)) -\kappa^l_\infty(V\tstar L) +\co^l_{\neq \infty} (V \tstar L)\\
                       && -\sum_{i}^{} \sum_{a+b\geq 1}  \co^i_{\neq \infty,a}(V) \co^{l-i}_{\neq \infty,b}(L)+ \sum_{i}^{} \sum_{a+b\geq 1} \co^i_{\neq \infty,a}(V)\co^{l-1-i}_{\neq \infty,b}(L)
                     .\\
   \end{eqnarray*}
By Corollary~\ref{CorTS}(ii),
\[ \co^l_{\neq \infty} (V \star L)-\co^l_{\neq \infty} (V \tstar L)=\epsilon_l. \]
Moreover, by Corollary~\ref{CorTS}(i)
$$ \co_{\neq \infty}^l(V\star L)=\sum_{i+j=l}\sum_{a_1+a_2\geq 1} \co^i_{\neq \infty,a_1}(V)
\co^j_{\neq \infty,a_2}(L)+
\sum_{i+j=l-1}\sum_{a_1+a_2< 1} \co^i_{\neq \infty,a_1}(V)
\co^j_{\neq \infty,a_2}(L) $$
Inserting this into the above equation yields \begin{eqnarray*}
     h^{l+1}(H^1_{\rm par} (V \tstar L))&=&  
             \sum_{i}^{} (\kappa^{i-1}_\infty(V)+h^i(H^1_{\rm par}(V)) )(\delta^{l-i}(L)-\delta^{l+1-i}(L)+h^{l+1-i}(L))+ \\
                         &&\\
                          &&  \sum_{i}^{} (\delta^{i-1}(V)-\delta^i(V)-h^i(V)) \co^{l-i}_{\neq \infty}(L)+\\
                          &&\\
                       && \kappa^l_\infty(V\ten L (t-x))  -\kappa^l_\infty(V\tstar L) -\epsilon_l+\sum_{i}^{} \co^{i-1}_{\neq \infty}(V) \co^{l-i}_{\neq \infty}(L)\\    
                       &=&    
             \sum_{i}^{} (\kappa^{i-1}_\infty(V)+h^i(H^1_{\rm par}(V)) )(\delta^{l-i}(L)-\delta^{l+1-i}(L)+h^{l+1-i}(L))+ \\
                         &&\\
                          &&  \sum_{i}^{} (\delta^{i-1}(V)-\delta^i(V)-h^i(V)+\co^{i-1}_{\neq \infty}(V) ) \co^{l-i}_{\neq \infty}(L)+\\
                          &&\\
                       && \kappa^l_\infty(V\ten L (t-x))  -\kappa^l_\infty(V\tstar L) -\epsilon_l\\
                       &=&
             \sum_{i}^{} (\kappa^{i-1}_\infty(V)+h^i(H^1_{\rm par}(V)) )(\delta^{l-i}(L)-\delta^{l+1-i}(L)+h^{l+1-i}(L))+ \\
                         &&\\
                          &&  \sum_{i}^{} (\kappa^{i-1}_\infty(V)+h^i(H^1_{\rm par}(V)) ) \co^{l-i}_{\neq \infty}(L)+\kappa^l_\infty(V\ten L (t-x))  -\kappa^l_\infty(V\tstar L) -\epsilon_l\\
                  &=&       \sum_{i}^{} (h^i(H^1_{\rm par}(V))+ \kappa^{i-1}_\infty(V)) (\delta^{l-i}(L)-\delta^{l+1-i}(L)-h^{l+1-i}(L)+ \co^{l-i}_{\neq \infty}(L))\\
                          &&\\
                       &&+ \kappa^l_\infty(V\ten L (t-x))-\kappa^l_\infty(V\tstar L) -\epsilon_l\\   
            &=&  \sum_{i}^{} (h^i(H^1_{\rm par}(V))+\kappa^{i-1}_\infty(V)) (h^{l+1-i}(H^1_{\rm par}(L))+\kappa^{l-i}_\infty(L)  )  \\
                       && +\kappa^l_\infty(V\ten L (t-x)) -\kappa^l_\infty(V\tstar L) -\epsilon_l,\\
  \end{eqnarray*}             
  as claimed. 
 \end{proof}

   The following result is also obtained  in the Dissertation of Nicolas Martin  \cite{Martin}, Thm.~6.3.1:

 \begin{cor}\label{corcohofalt} Let $V\star L_\chi\neq \delta_x$ for any $x\in \AA^1.$ 
Then  $$\nu^i_{\infty,1-\mu,0}(V\star L_\chi)=h^i(H^1_{\rm par}(V)).$$
 \end{cor}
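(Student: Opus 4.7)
The plan is to compute the quantity $h^i(V\star L_\chi)=\nu^i_\infty(V\star L_\chi)$ in two independent ways and compare. On one side I decompose it as a sum over monodromy eigenvalues at $\infty$ using the transformation rule of Proposition~\ref{Lgeneric2}(ii); on the other side I rewrite it via Proposition~\ref{Lgeneric2}(i) combined with Proposition~\ref{H-para} applied to $V$ itself. Setting the two expressions equal then isolates $\nu^i_{\infty,1-\mu,0}(V\star L_\chi)$.

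The hypothesis $V\star L_\chi\neq\delta_x$ forces $V\star L_\chi=V\tstar L_\chi$ by Theorem~\ref{delta}, so Propositions~\ref{Lgeneric1} and~\ref{Lgeneric2} are fully available. Working in the regime $\mu\sim 1$ that is implicit in the generic assumption on $L_\chi$, genericity forces $\nu^j_{\infty,a,l}(V)=0$ for $a\in\{\mu,1-\mu\}$, while $\mu\sim 1$ forces every monodromy exponent $a'$ of $V$ at $\infty$ to lie in $[0,\mu)$. Hence in Proposition~\ref{Lgeneric2}(ii) only the cases $a'=0$ and $0<a'<\mu$ occur; they contribute the eigenvalue classes $1-\mu$ (shifted from $0$) and $a'+1-\mu$ respectively in the nearby cycles of $V\star L_\chi$ at $\infty$, together with the exceptional primitive-$0$ datum $\nu^j_{\infty,1-\mu,0}(V\star L_\chi)$.

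Expanding $\nu^i_{\infty,1-\mu}(V\star L_\chi)=\sum_{l\geq 0}\sum_{k=0}^l\nu^{i+k}_{\infty,1-\mu,l}(V\star L_\chi)$, splitting off $l=0$, and inserting the translation $\nu^{j+k}_{\infty,1-\mu,l}(V\star L_\chi)=\nu^{j+k-1}_{\infty,0,l-1}(V)$ for $l\geq 1$, the reindexing $(l,k)\mapsto(l-1,k-1)$ turns the inner summation range $0\leq k\leq l$ into $-1\leq k'\leq l'$, so that the boundary term $k'=-1$ produces a copy of $\kappa^{i-1}_\infty(V)$. Concretely one obtains
\[
\nu^i_{\infty,1-\mu}(V\star L_\chi)=\nu^i_{\infty,1-\mu,0}(V\star L_\chi)+\kappa^{i-1}_\infty(V)+\nu^i_{\infty,0}(V).
\]
For $0<a'<\mu$ the transformation is index-preserving, giving $\nu^i_{\infty,a'+1-\mu}(V\star L_\chi)=\nu^i_{\infty,a'}(V)$. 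Adding $\nu^i_{\infty,0}(V\star L_\chi)=0$ (by genericity), summing over all eigenvalues, and using $\nu^i_{\infty,0}(V)+\sum_{0<a'<\mu}\nu^i_{\infty,a'}(V)=h^i(V)$ (since all exponents of $V$ at $\infty$ lie in $[0,\mu)$), I arrive at
\[
h^i(V\star L_\chi)=\nu^i_{\infty,1-\mu,0}(V\star L_\chi)+\kappa^{i-1}_\infty(V)+h^i(V).
\]

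Finally, Proposition~\ref{Lgeneric2}(i) gives $h^i(V\star L_\chi)=\delta^{i-1}(V)-\delta^i(V)+\co^{i-1}_{\neq\infty}(V)$, and Proposition~\ref{H-para} applied to $V$ gives $h^i(H^1_{\rm par}(V))=\delta^{i-1}(V)-\delta^i(V)-h^i(V)-h^{i-1}(V)+\co^{i-1}(V)$. Substituting $\co^{i-1}(V)=\co^{i-1}_{\neq\infty}(V)+\co^{i-1}_\infty(V)$ together with the identity $\co^{i-1}_\infty(V)=h^{i-1}(V)-\kappa^{i-1}_\infty(V)$ (immediate from Definition~\ref{def1}, since $\nu^{i-1}_\infty(V)=h^{i-1}(V)$) produces $h^i(H^1_{\rm par}(V))=h^i(V\star L_\chi)-h^i(V)-\kappa^{i-1}_\infty(V)$. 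Comparing with the displayed formula above yields $\nu^i_{\infty,1-\mu,0}(V\star L_\chi)=h^i(H^1_{\rm par}(V))$. The main technical point — and the only nonroutine step — is the reindexing in the third paragraph: the fact that the Hodge-and-primitive shift $\nu^j_{\infty,0,l}(V)=\nu^{j+1}_{\infty,1-\mu,l+1}(V\star L_\chi)$ produces \emph{exactly} the correction $\kappa^{i-1}_\infty(V)$ and not something more complicated is what makes the two-sided computation close.
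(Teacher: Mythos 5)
Your argument is correct and closes, but it is a genuinely different route from the paper's. The paper deconvolves: it writes $(V\star L_\chi)\tstar L_{\overline{\chi}}=V(-1)$ and feeds this into the K\"unneth-type formula of Theorem~\ref{cohoFalt}; since $L_{\overline{\chi}}$ is parabolically rigid with $\kappa_\infty=0$, the right-hand side of that theorem vanishes and the identity collapses to $h^{i+1}(H^1_{\rm par}(V(-1)))+\kappa^i_\infty(V(-1))-\kappa^i_\infty((V\star L_\chi)\otimes L_{\overline{\chi}}(t-x))=0$, after which the same primitive-part bookkeeping you perform (splitting $\nu^i_{\infty,1-\mu,\prim}$ into the $l=0$ term and $\nu^{i-1}_{\infty,0,\prim}(V)$) isolates $\nu^i_{\infty,1-\mu,0}$. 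You instead avoid Theorem~\ref{cohoFalt} and the inverse convolution entirely: you count $h^i(V\star L_\chi)=\nu^i_\infty(V\star L_\chi)$ eigenvalue by eigenvalue via Proposition~\ref{Lgeneric2}(ii) and compare with the closed formula of Proposition~\ref{Lgeneric2}(i) rewritten through Proposition~\ref{H-para}. Your reindexing producing exactly $\kappa^{i-1}_\infty(V)+\nu^i_{\infty,0}(V)$ from the shifted $a=0$ blocks is right, as is the identity $\co^{i-1}_\infty(V)=h^{i-1}(V)-\kappa^{i-1}_\infty(V)$. What your approach buys is self-containedness at the level of Section~\ref{secglobal} (no appeal to Section~\ref{seclocalinfty} machinery); what it costs is generality: both your uses of Proposition~\ref{Lgeneric2}(i) and your claim that every exponent of $V$ at $\infty$ lies in $[0,\mu)$ require $\mu\sim 1$, whereas the paper's deconvolution argument works for any generic $\mu$. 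Since the corollary is only ever applied with $\mu_1,\mu_2\sim 1$, this restriction is harmless in context, but it should be stated as a hypothesis in your version.
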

 
 \begin{proof}    Since  $V\tstar L_\chi=V\star L_\chi$ we have
  $$ (V\star L_\chi) \tstar L_{\overline{\chi}} = V(-1).$$
 Hence, since $L_\chi$ is parabolically rigid, it follows from Theorem~\ref{cohoFalt} that 
 $$ h^{i+1}(H^1_{\rm par}(V(-1)) )+ \kappa^{i}_\infty(V(-1)) -\kappa^{i}_\infty((V \star L_\chi) \ten L_{\overline{\chi}} (t-x)))=0.$$
 By definition of $\kappa_\infty$ and the first formula in  Proposition~\ref{Lgeneric2}~(ii)
 \begin{eqnarray*}
 \kappa^{i}_\infty((V \star L_\chi) \ten L_{\overline{\chi}} (t-x))&=&
  \nu^i_{\infty,1-\mu,{\rm prim}}(V \star L_\chi)\\
  &=&\nu^i_{\infty,1-\mu,0}(V \star L_\chi)+\nu^{i-1}_{\infty,0,{\rm prim}}(V)\\
  &=&
  \nu^{i}_{\infty,1-\mu,0}(V \star L_\chi)+\kappa^{i}_\infty(V(-1)).\end{eqnarray*} 
  Hence we obtain
  \[ \nu^i_{\infty,1-\mu,0}(V\star L_\chi)=h^{i+1}(H^1_{\rm par}(V(-1)))=h^i(H^1_{\rm par}(V)).\]

 \end{proof}

 \begin{rem}
 The above theorem may also be derived from the well known formula
 $$ H^*(\AA^1,V\star_* L)=H^*(\AA^1,V)\otimes H^*(\AA^1,L)$$ 
 with 
 $$ h^{l+1}(H^*(\AA^1,V)\otimes H^*(\AA^1,L))=    \sum_{i}^{} (h^i(H^1_{\rm par}(V))+\kappa^{i-1}_\infty(V)) (h^{l+1-i}(H^1_{\rm par}(L))+\kappa^{l-i}_\infty(L)  )$$
 and
 $$h^{l+1}(H^*(\AA^1,V\star_* L))=h^{l+1}(H^1_{\rm par} (V \tstar L)) +\kappa^l_\infty(V\tstar L)+\epsilon_l-\kappa^l_\infty(V\ten L (t-x)).$$ 
 In the last equation one uses the usual long exact cohomology sequence and 
 $$H^0(\AA^1,V\star_* L)=H^0(\mathcal{H}^0(\RH(V\star L))),$$ 
 cf.~\cite{Katz96},  Lemma~2.6.9.
 
 \end{rem}

 \begin{rem}\label{tstar}
 If $V\star L=V\tstar L, L\star M=L\tstar M$ and $(V\tstar L)\tstar M = (V\star L) \star M$ then
  \begin{eqnarray*}
  (V\tstar L)\tstar M 
                     &=& V\tstar (L\tstar M).\\ 
  \end{eqnarray*}
 \end{rem}
 
 \begin{proof}
 By assumption we have
 \begin{eqnarray*}
  (V\tstar L)\tstar M &=& (V\tstar L)\star M\\ 
                 &=& (V\star L)\star M\\ 
                  &=& V\star (L\star M)\\ 
                   &=& V\star (L\tstar M),
   \end{eqnarray*}
   using the associativity of the middle convolution under the Assumption~\ref{ass1} (cf.~\cite{Katz96}, Lemma~2.6.5). 
  By the nature of the $\tstar$-convolution, in the term on the  the left hand side of the previous 
   equation, there appears no skyscraper sheaf $\delta_c$ as a summand. 
  Hence we can conclude that
  $V\star (L\tstar M)=V\tstar (L\tstar M)$ and finally
  \begin{eqnarray*}
  (V\tstar L)\tstar M &=&V\tstar (L\tstar M).
  \end{eqnarray*}
 \end{proof}

 \begin{cor}\label{KunF}
  Let $V,L,M$ be parabolically rigid without unipotent Jordan blocks at $\infty$.
  Assume further that $V \ten L(t-x)$ has also no unipotent Jordan block
  at $\infty$. 
 \begin{enumerate}
  \item Then $V\star L=V\tstar L$ is also parabolically rigid (not necessarily irreducible) without unipotent Jordan blocks at $\infty$.
\item  Moreover if  $L\star M=L\tstar M$ and $(V\tstar L)\tstar M=(V\tstar L)\star M$ then
 
 \begin{eqnarray*}
 \kappa^l_\infty((V\tstar L) \ten M (t-x)) &=&  \kappa^l_\infty(V \ten (L \tstar M)(t-x)). \\
  \end{eqnarray*}
  \end{enumerate}
  
 \end{cor}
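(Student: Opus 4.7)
The plan for (i) is to read Theorem~\ref{cohoFalt} as an identity in which every term on the left is a non-negative integer. Parabolic rigidity of $V$ and $L$ gives $h^i(H^1_{\rm par}(V))=h^{l+1-i}(H^1_{\rm par}(L))=0$, and the absence of unipotent Jordan blocks at $\infty$ translates, via $\kappa^p_x=\nu^p_{x,0,{\rm prim}}$, into $\kappa^{i-1}_\infty(V)=\kappa^{l-i}_\infty(L)=0$. Hence the right-hand side of Theorem~\ref{cohoFalt} vanishes, and combined with the hypothesis $\kappa^l_\infty(V\ten L(t-x))=0$, the identity collapses to
\[
h^{l+1}(H^1_{\rm par}(V\tstar L))+\kappa^l_\infty(V\tstar L)+\epsilon_l=0,
\]
whose three non-negative summands must each vanish for every $l$. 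This yields parabolic rigidity of $V\tstar L$, the absence of unipotent Jordan blocks at $\infty$, and, via Theorem~\ref{delta} (a summand $H=\delta_c(-q-1)$ would contribute to $\epsilon_{q+1}$), the identity $V\star L=V\tstar L$.

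For (ii) the plan is to apply Theorem~\ref{cohoFalt} to both $(V\tstar L)\star M$ and $V\star(L\tstar M)$ and to match the two outputs via Remark~\ref{tstar}. In the first application the right-hand side of Theorem~\ref{cohoFalt} vanishes because part (i), applied to the pair $(V,L)$, shows that $V\tstar L$ is parabolically rigid without unipotent Jordan blocks at $\infty$, and $M$ has these properties by hypothesis; the $\epsilon$-term vanishes by the assumption $(V\tstar L)\star M=(V\tstar L)\tstar M$, so
\[
\kappa^l_\infty((V\tstar L)\ten M(t-x))=h^{l+1}(H^1_{\rm par}((V\tstar L)\tstar M))+\kappa^l_\infty((V\tstar L)\tstar M).
\]
In the second application the right-hand side of Theorem~\ref{cohoFalt} again vanishes, this time because parabolic rigidity of $V$ and the absence of unipotent blocks at $\infty$ kill the factor $h^i(H^1_{\rm par}(V))+\kappa^{i-1}_\infty(V)$ of every summand, while the $\epsilon$-term vanishes because the chain
\[
V\star(L\tstar M)=V\star(L\star M)=(V\star L)\star M=(V\tstar L)\star M=(V\tstar L)\tstar M
\]
--- resting on $L\star M=L\tstar M$, associativity of middle convolution under Assumption~\ref{ass1} (cf.~Lemma~\ref{lem:starexact}), part~(i), and the hypothesis $(V\tstar L)\star M=(V\tstar L)\tstar M$ --- identifies $V\star(L\tstar M)$ with an intermediate extension. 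The analogous output now involves $V\tstar(L\tstar M)$ in place of $(V\tstar L)\tstar M$, and Remark~\ref{tstar}, whose three hypotheses have just been checked, gives $(V\tstar L)\tstar M=V\tstar(L\tstar M)$, so the two formulas coincide and yield the desired equality.

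The main obstacle is the careful bookkeeping of the skyscraper contributions $\epsilon_l$: before Theorem~\ref{cohoFalt} may be read as a vanishing identity for the two double convolutions in (ii), one must certify that no skyscraper summand appears in either $(V\tstar L)\star M$ or $V\star(L\tstar M)$, and for the second of these this requires the full chain of associativity and $\tstar$-identifications displayed above.
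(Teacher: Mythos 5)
Your proof is correct and follows essentially the same route as the paper: read Theorem~\ref{cohoFalt} as a vanishing identity whose right-hand side dies by the rigidity and no-unipotent-block hypotheses, and match the two applications of it via Remark~\ref{tstar}. The only (harmless) variation is in part (i), where the paper gets $V\star L=V\tstar L$ directly from Theorem~\ref{delta} (the no-unipotent-block hypothesis on $V\otimes L(t-x)$ rules out $V$ being dual to a translate of $L$), while you extract $\epsilon_l=0$ from the non-negativity of the three summands --- both arguments work.
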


\begin{proof}
We have $\kappa^l_\infty(L)=\kappa^l_\infty(L)=\kappa^l_\infty(M)=0$ since there is no unipotent Jordan block at $\infty$.
The assumption that  $V \ten L(t-x)$ has  no unipotent Jordan block
  at $\infty$ implies that $V$ is not dual to a translate of the form $L({c-x})$. 
  Hence $V\star L=V\tstar L$ by Theorem~\ref{delta}.
  Further $V\tstar L$ is parabolically rigid  without unipotent Jordan block at $\infty$ by Theorem~\ref{cohoFalt},
  implying~(i).

   By Theorem~\ref{cohoFalt}, 
  $$     h^{l+1}(H^1_{\rm par } ((V \tstar L) \tstar M))) +\kappa^l_\infty((V\tstar L)\tstar M) +\epsilon_l((V\tstar L)\tstar M)
    -\kappa^l_\infty((V\tstar L)\ten M (t-x))=0$$
    and 
$$  h^{l+1}(H^1_{\rm par} (V \tstar (L \tstar M))) +\kappa^l_\infty(V\tstar (L \tstar M)) +\epsilon_l(V\tstar (L\tstar M))
    -\kappa^l_\infty((V \ten (L\tstar M) (t-x))=0.$$
   Since by Remark~\ref{tstar}~(ii), $(V\tstar L)\tstar M= V\tstar (L\tstar M)$ we deduce
  \begin{eqnarray*}
  \kappa^l_\infty((V\tstar L) \ten M (t-x)) &=&  \kappa^l_\infty(V \ten (L \tstar M)(t-x)). \\
  \end{eqnarray*}
  
  \end{proof}

 \begin{rem}\label{Jordandec} 
 Let $m,n\in \NN_{>0}$ and let $a_m,b_n\in \RR\cap (0,1).$

 \begin{enumerate}
  \item    Let $V,L=L(n,b_n)$ be as in Assumption~\ref{ass1} such that 
      $\psi_\infty(V)\simeq J^{m-1}(a_m,m)$ and $\psi_\infty(L) \simeq J^{n-1}(b_n,n)$. Then the  non zero Hodge numbers of $V\otimes L$ are
   $$ h^p(V\ten L) =\left\{ \begin{array}{cccc}
                                                     p+1, & 0\leq p\leq  \min\{m,n\}-1 \\
                                                     \min\{n,m\}, & \min\{m,n\} \leq p< m+n-\min\{m,n\} \\
                                                     m+n-p-1, &  m+n-\min\{m,n\}\leq p<m+n
                                                     \end{array}\right.
 $$
 and $ \psi_\infty(V \ten L)$ is isomorphic to 
 $$
      J^{m+n-2}(a_m+b_n-[a_m+b_n ],m+n-1) \oplus \dots \oplus J^{m+n-1-\min\{m,n\}}(a_m+b_n-[a_m+b_n ],m+n+1-2\min\{m,n\}).$$

 \item  Let $V,L=L(n,b_n)$ be as in Assumption~\ref{ass1}.  Then the  structure of 
 $ \psi_\infty(V) $ is uniquely determined by
  $ \kappa^l_\infty(V\ten L(n,b_n))$ for all $n, b_n.$ 
 \end{enumerate}
 \end{rem}

 \begin{proof}
  The tensor decomposition of $J(n) \ten J(m)$ of
    the tensor product of two unipotent Jordan blocks of size $n$, resp. $m$, in characteristic zero is given by
   $$J(m+n-1) \oplus J(m+n-3) \oplus \ldots \oplus J(m+n+1-2\min\{m,n\}),$$
   cf. Reference Chapter, Table 5, $A_1$,\cite{OnischikVinberg}.
  Moreover, if
   $\psi_\infty(V)\simeq J^{m-1}(a_m,m)$ and $\psi_\infty(L)\simeq J^{n-1}(b_n,n)$ then
   $$ h^0(V)=\ldots=h^{m-1}(V)=1,\quad  h^0(L)=\ldots=h^{n-1}(L)=1$$
   and the non zero Hodge numbers are
   $$ h^p(V\ten L)= \sum_{i+j=p} h^i(V) h^j(L) =\left\{ \begin{array}{cccc}
                                                     p+1, & 0\leq p\leq  \min\{m,n\}-1 \\
                                                     \min\{n,m\}, & \min\{m,n\} \leq p< m+n-\min\{m,n\} \\
                                                     m+n-p-1, &  m+n-\min\{m,n\}\leq p<m+n
                                                     \end{array}\right..
 $$
Since
$$ \# \{ p \mid h^p(V\ten L) \geq 1 \} =m+n-1$$
and $\nu^{i_1}_{\infty,a_m+b_n-[a_m+b_n],m+n-2}(V\ten L))\geq 1$ for some $i_1$ we obtain $i_1=m+n-2$. 
Since
$$ \# \{ p \mid h^p(V\ten L)  \geq 2 \} =m+n-3$$
and $\nu^{i_2}_{\infty,a_m+b_n-[a_m+b_n],m+n-4}(V\ten L)\geq 1$ we get $i_2=m+n-3$.
It follows now iteratively by repeating this argument that 
 the only possibility that the above derived Hodge numbers 
           match this Jordan decomposition is given as follows:
 $$\psi_\infty(V \ten L)\simeq  J^{m-1}(a_m,m)\ten J^{n-1} (b_n,n)=$$
$$      =J^{m+n-2}(a_m+b_n-[a_m+b_n],m+n-1) \oplus \dots \oplus J^{m+n-1-\min\{m,n\}}(a_m+b_n-[a_m+b_n],m+n+1-2\min\{m,n\}).$$ 
 This proves (i).
 
 Let $\psi_\infty(V)\simeq \bigoplus_{i,l} J^i(0,l)^{\nu^i_{\infty,0,l-1}(V)} \oplus \bigoplus_{(i,a,l): a\in(0,1)} J^i(a,l)^{\nu^i_{\infty,a,l-1}(V)}$.
 
 Then 
  $$\kappa^p_\infty(V \ten L(1,0))=\kappa^p_\infty(V)=\nu^p_{\infty,0,{ \rm prim}}(V)=\sum_l \nu^p_{\infty,0,l}(V)$$
  and by (i)
   $$\kappa^{p+1}_\infty(V\ten L(2,0))=\nu^{p+1}_{\infty,0,\prim}(V\ten L(2,0))=
   \sum_{l>0} \nu^{p+1}_{\infty,0,l}(V)+ \sum_{l} \nu^{p}_{\infty,0,l}(V) $$
 and 
  $$\kappa^{p+2}_\infty(V\ten L(0,3))=\nu^{p+2}_{\infty,0,\prim}(V\ten L(0,3))=
   \sum_{l>1} \nu^{p+2}_{\infty,0,l}(V)+ \sum_{l>0} \nu^{p+1}_{\infty,0,l}(V) +\sum_{l} \nu^{p}_{\infty,0,l}(V).$$
   
 Iterating this argument, one obtains 
   \begin{eqnarray*}\kappa^{p+r}_\infty(V\ten L(r+1,0))&=&\nu^{p+r}_{\infty,0,\prim}(V\ten L(r+1,0))\\
   &=&
   \sum_{l>r-1} \nu^{p+r}_{\infty,0,l}(V)+ \sum_{l>r-2} \nu^{p+r-1}_{\infty,0,l}(V)+\dots + \sum_{l} \nu^{p}_{\infty,0,l}(V).\end{eqnarray*}
   Hence one can recursively determine $\nu^{p+r}_{\infty,0,l}(V)$ starting with the $\nu^{j}_{\infty,0,\rk(V)-1}(V)$ for all $j$. 
   Analogously we proceed in case where $a \in (0,1)$.
 \end{proof}

 \begin{lemma}\label{FaltHyp} 
 Let $m,n\in \NN_{>0}$ and let $a_m,b_n\in \RR\cap (0,1).$ 
  Let $M_m, {N}_n$ be  irreducible hypergeometric Hodge modules of rank $m$, resp. $n$, 
  such that $\co_0(M_m)=m-1$ and  $\co_0(N_n)=n-1$ and  the local monodromy 
  at $\infty$ is a maximal Jordan block of the form 
  $\psi_\infty(M_m)\simeq J^{m-1}(a_m,m)$ and   $\psi_\infty(N_n)\simeq J^{n-1}(b_n,n)$.  (cf.~Def.~\ref{def1}).

   Then $M_m$ and $N_n$ are parabolically rigid, i.e.
   $$ H^1_{\rm par} (M_m)=0, \quad  H^1_{\rm par}(N_n)=0,$$
   and
    \[ \psi_\infty (M_m\tstar N_n)=\left\{ \begin{array}{cccc}
      \psi_\infty(M_m) \ten \psi_\infty(N_n) &\simeq&J^{m-1}(a_m,m)\ten J^{n-1} (b_n,n),& 0< a_m+b_n<1 \\
       (\psi_\infty(M_m) \ten \psi_\infty(N_n))(-1) &\simeq &(J^{m-1}(a_m,m)\ten J^{n-1} (b_n,n))(-1),& 1< a_m+b_n<2 \\
   \end{array}\right.,
\]
where
 $$  J^{m-1}(a_m,m) \ten J^{n-1}(b_n,n)=$$
$$      J^{m-1+n-1}(a_m+b_n-\lfloor a_m+b_n \rfloor,m+n-1) \oplus \cdots \oplus J^{m+n-1-\min\{m,n\}}(a_m+b_n-\lfloor a_m+b_n \rfloor,m+n+1-2\min\{m,n\}).$$

\end{lemma}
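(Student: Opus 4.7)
The proof has two parts: parabolic rigidity of $M_m$ and $N_n$, and the explicit Jordan structure at $\infty$ of $M_m \tstar N_n$. For parabolic rigidity, the defining hypergeometric data force $\co(M_m) = 2m = 2\,{\rm rk}(M_m)$: indeed, $\co_0(M_m) = m-1$ by assumption, the single Jordan block $J^{m-1}(a_m,m)$ with $a_m \in (0,1)$ at $\infty$ gives $\co_\infty(M_m) = \co_{ss,\infty}(M_m) = m$, and the remaining finite singularity of the hypergeometric is a pseudo-reflection contributing $\co = 1$. Together with irreducibility and non-triviality of $M_m$ (so $H^0 = H^2 = 0$), the Euler characteristic forces ${\rm rk}(H^1_{\rm par}(M_m)) = 0$, hence $H^1_{\rm par}(M_m) = 0$; the Hodge-refined statement $h^p(H^1_{\rm par}(M_m)) = 0$ for all $p$ follows by inserting the prescribed Hodge data of $M_m$ into Proposition \ref{H-para}. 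The same argument yields parabolic rigidity of $N_n$.

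Since $a_m, b_n \in (0,1)$, neither $M_m$ nor $N_n$ has a unipotent Jordan block at $\infty$. Moreover, by Remark \ref{Jordandec}(i) the monodromy of $M_m \otimes N_n(t-x)$ at $\infty$ has unique eigenvalue $e^{-2\pi i(a_m+b_n)}$, which differs from $1$ under the hypothesis $a_m + b_n \in (0,1) \cup (1,2)$. Hence $M_m \otimes N_n(t-x)$ also has no unipotent Jordan block at $\infty$, and Corollary \ref{KunF}(i) yields that $M_m \star N_n = M_m \tstar N_n$ is parabolically rigid and has no unipotent Jordan block at $\infty$.

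For the Jordan structure at $\infty$ of $M_m \tstar N_n$, I argue by induction on $m + n$. The base case $m = n = 1$ amounts to the convolution of two Kummer Hodge modules, where the claim follows from a direct analysis of the two cases $a_1 + b_1 < 1$ and $a_1 + b_1 > 1$ using Proposition \ref{Lgeneric2}(ii). For the inductive step, Remark \ref{Jordandec}(ii) says that $\psi_\infty(M_m \tstar N_n)$ is uniquely determined by the numbers $\kappa^l_\infty((M_m \tstar N_n) \otimes L(r, b_r)(t-x))$ for varying test modules $L(r, b_r)$. By Corollary \ref{KunF}(ii), whose hypotheses are verified using the previous paragraph together with the inductive hypothesis applied to $N_n \tstar L(r, b_r)$, these numbers equal $\kappa^l_\infty(M_m \otimes (N_n \tstar L(r, b_r))(t-x))$, and unfolding via Remark \ref{Jordandec}(i) gives the desired tensor product decomposition $J^{m-1}(a_m, m) \otimes J^{n-1}(b_n, n)$.

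The main technical obstacle is identifying the correct Tate twist: when $0 < a_m + b_n < 1$ the residue at $\infty$ stays in $[0, 1)$ and no Hodge shift occurs, but when $1 < a_m + b_n < 2$ the residue wraps around to $a_m + b_n - 1$ and the Hodge index shifts up by one, producing the Tate twist $(-1)$. This shift parallels the $a > \mu$ case of Proposition \ref{Lgeneric2}(ii); the correct Hodge bookkeeping in the induction is achieved by tracking the primitive Hodge index at residue $a_m + b_n - \lfloor a_m + b_n \rfloor$ and comparing both sides of the equation supplied by Corollary \ref{KunF}(ii).
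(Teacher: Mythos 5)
Your first part (parabolic rigidity) matches the paper: the count $\co_0(M_m)+\co_1(M_m)+\co_\infty(M_m)-2m=(m-1)+1+m-2m=0$, with the pseudo-reflection at the third singularity contributing $1$, is exactly the argument given there, and the reduction $M_m\star N_n=M_m\tstar N_n$ via Corollary~\ref{KunF}(i) is also as in the paper. The gap is in the determination of $\psi_\infty(M_m\tstar N_n)$. Your induction on $m+n$ is not well-founded: to apply Remark~\ref{Jordandec}(ii) to $V=M_m\tstar N_n$ you need the numbers $\kappa^l_\infty((M_m\tstar N_n)\ten L(r,b_r)(t-x))$ for $r$ ranging up to at least the size $m+n-1$ of the largest Jordan block, and to convert these via Corollary~\ref{KunF}(ii) and Remark~\ref{Jordandec}(i) you must already know $\psi_\infty(N_n\tstar L(r,b_r))$ --- an instance of the very statement being proved, with total rank $n+r\geq m+n$. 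So the inductive hypothesis is never applicable. (This bootstrapping with test modules is precisely the technique the paper uses in Corollary~\ref{FaltHyp2}, but only after Lemma~\ref{FaltHyp} is available as input to evaluate $\psi_\infty(M_m\star N_n)$ there.)

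The missing idea is an independent determination of the Jordan block structure at $\infty$ (block sizes and residues, prior to any Hodge data) of $M_m\star N_n$. The paper obtains this from the stationary phase formula for hypergeometric modules (\cite{SabbahStationary}, Theorem~5.1), which gives the blocks $J(a_m+b_n,m+n-1),J(a_m+b_n,m+n-3),\dots,J(a_m+b_n,m+n+1-2\min\{m,n\})$, i.e.\ exactly those of $J(a_m,m)\ten J(b_n,n)$. It then pins down the Hodge superscripts by computing, via the formula established in the proof of Theorem~\ref{cohoFalt}, that $h^{k}(M_m\tstar N_n)=h^{k}(M_m\ten N_n(t-x))$ when $0<a_m+b_n<1$ and $h^{k}(M_m\tstar N_n)=h^{k-1}(M_m\ten N_n(t-x))$ when $1<a_m+b_n<2$ (this shift is the source of the Tate twist $(-1)$), and matching these Hodge numbers against the block sizes by the uniqueness argument of Remark~\ref{Jordandec}(i). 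Your proposal contains neither the stationary-phase input nor this explicit Hodge-number computation; the closing paragraph about tracking the primitive Hodge index gestures at the latter but does not carry it out.
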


 \begin{proof}
 A hypergeometric Hodge module $H$ has singularities at $0, 1$ and $\infty$ (up to a Moebius transformation),
 where  the local monodromy at $1$ is a pseudo reflection,
 i.e. $\co_1(H)=1$, cf. Section 2, \cite{BH}.
 If $\co_0(H)=\rk(H)-1$
 we get 
 \[ \rk(H^1_{\rm par} (H))=\co_0(H)+\co_1(H)+\co_\infty(H)-2\rk(H)=0,\]
 which implies the first claim.
 
Assume first  $0< a_m+b_n<1$. Then $M_m \star N_n=M_m \tstar N_n$ by Corollary~\ref{KunF}~(i).
 In   the proof of Theorem~\ref{cohoFalt} 
 it was shown that  
  \begin{eqnarray*}
    h^{k}(M_m\tstar N_n)
                  &=&    \sum_{i}^{} (h^{i}(H^1_{\rm par} (M_m))+\kappa^{i-1}_\infty(M_m)) h^{k-i}(N_n) \\
                     &&   + \sum_{i}^{} h^i(M_m)(\delta^{k-1-i}(N_n)-\delta^{k-i}(N_n)+ \co^{k-1-i}_{\neq \infty}(N_n)) \\
                       && +o^{k-1}_\infty(M_m\ten N_n (t-x))-o^{k}_\infty(M_m\ten N_n (t-x))-\kappa^{k-1}_\infty(M_m\ten N_n (t-x)) \\
                       &=&  0  + \sum_{i}^{} h^i(M_m)(h^{k-i}(H^1_{\rm par}(N_n))+h^{k-i}(N_n)) +0-0-0\\
                       &=&  \sum_{i}^{} h^i(M_m)h^{k-i}(N_n) \\
                       &=&h^{k}(M_m \ten N_n(t-x)).
                          \end{eqnarray*}
                                  The  stationary phase formula (cf.~\cite{SabbahStationary}), Theorem~5.1,
                                   implies  that the Jordan blocks of $M_m\star N_n$ at infinity
           are
            $J(a_m+b_n,m+n-1),J(a_m+b_n,m+n-3),\ldots,J(a_m+b_n,m+n+1-2\min\{m,n\})$ which are exactly the Jordan blocks of the tensor product
            $J(a_m,m)\ten J(b_n,n)$ by Remark~\ref{Jordandec}.           
           The only possibility that the above derived Hodge numbers 
           match this Jordan decomposition is given as follows: 
      $$        \psi_\infty({M_m}\tstar {N_n}) = J^{m-1}(a_m,m)\ten J^{n-1} (b_n,n)$$
      $$=J^{m+n-2}(a_m+b_n,m+n-1) \oplus \dots \oplus J^{m+n-\min\{m,n\}}(a_m+b_n,m+n+1-2\min\{m,n\}).$$

  Assume now  $1< a_m+b_n<2$. As in   the proof of Theorem~\ref{cohoFalt} 
and using $o^l_\infty(M_m \ten N_n(t-x))=h^l(M_m \ten N_n(t-x))$ (cf.~Theorem~\ref{H-tensor}) one finds 
  \begin{eqnarray*}
    h^{k}(M_m\tstar N_n)
                  &=&    \sum_{i}^{} (h^{i}(H^1_{\rm par}(M_m))+\kappa^{i-1}_\infty(M_m)) h^{k-i}(N_n) \\
                     &&   + \sum_{i}^{} h^i(M_m)(\delta^{k-1-i}(N_n)-\delta^{k-i}(N_n)+ \co^{k-1-i}_{\neq \infty}(N_n)) \\
                       && +o^{k-1}_\infty(M_m\ten N_n (t-x))-o^{k}_\infty(M_m\ten N_n (t-x))-\kappa^{k-1}_\infty(M_m\ten N_n (t-x)) \\
                       &=&  0  + \sum_{i}^{} h^i(M_m)(h^{k-i}(H^1_{\rm par}(N_n))+h^{k-i}(N_n)) \\
                       && +    h^{k-1}(M_m \ten N_n(t-x))-h^{k}(M_m \ten N_n(t-x))
                       -0\\
                       &=&  \sum_{i}^{} h^i(M_m)h^{k-i}(N_n) +h^{k-1}(M_m \ten N_n(t-x))-h^{k}(M_m \ten N_n(t-x))\\
                       &=&h^{k-1}(M_m \ten N_n(t-x)).
                          \end{eqnarray*}
                          Using the stationary phase  as before the claim follows as in the case $0< a_m+b_n<1$.
  
\end{proof}

\begin{cor}\label{FaltHyp2}
 Let $m\in \NN_{>0}$ and let $a_m\in \RR \cap (0,1).$ 
 Let further $M_m$ be a parabolically rigid hypergeometric Hodge module of rank $m$
  with one non unipotent Jordan block $J^{m-1}(a_m,m)$ at $\infty$ of size $m$ and 
   $L$ be a Hodge module underlying a parabolically rigid  local system 
        without  unipotent Jordan blocks at $\infty$.
 If $L\ten M_m(t-x)$ has no unipotent Jordan block at infinity then  
 \begin{eqnarray*}
     \psi_\infty(L\tstar M_m) &\simeq&\bigoplus_{(k,a,l)} \left(J^k_{}(a,l)^{\nu^k_{\infty,a,l-1}(L)} \ten J^{m-1}_{}(a_m,m)\right)(-\lfloor a+a_m \rfloor).
    \end{eqnarray*}
\end{cor}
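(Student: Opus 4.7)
The plan is to reduce the statement to the hypergeometric case already established in Lemma~\ref{FaltHyp}, exploiting the swap formula of Corollary~\ref{KunF}(ii) together with the uniqueness statement of Remark~\ref{Jordandec}(ii). First I would use Corollary~\ref{KunF}(i) to conclude that $L\tstar M_m=L\star M_m$ is itself parabolically rigid and free of unipotent Jordan blocks at $\infty$, so that its nearby cycle at infinity is of the form $\bigoplus J^*(a,l)^*$ with $a\in(0,1)$. Writing the Jordan decomposition $\psi_\infty(L)\simeq\bigoplus_{(k,a,l)} J^k(a,l)^{\nu^k_{\infty,a,l-1}(L)}$ as in Remark~\ref{rempsi1}, every residue $a$ that actually appears satisfies $a+a_m\notin\ZZ$ by the no--unipotent--block hypothesis.

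Next I would test $L\tstar M_m$ against a family of irreducible parabolically rigid hypergeometric Hodge modules $H_{n,b_n}$ of rank $n$ with $\co_0(H_{n,b_n})=n-1$ and $\psi_\infty(H_{n,b_n})\simeq J^{n-1}(b_n,n)$, choosing $b_n$ generically so that none of $b_n+a_m$, $b_n+a$, $b_n+a_m+a$ lies in $\ZZ$ for any $a$ occurring in $\psi_\infty(L)$; only finitely many values have to be avoided. Under this genericity, Corollary~\ref{KunF}(i) gives $M_m\star H_{n,b_n}=M_m\tstar H_{n,b_n}$ and $(L\tstar M_m)\star H_{n,b_n}=(L\tstar M_m)\tstar H_{n,b_n}$, so Corollary~\ref{KunF}(ii), applied with $(V,L,M)$ replaced by $(L,M_m,H_{n,b_n})$, yields
\[
\kappa^l_\infty\bigl((L\tstar M_m)\ten H_{n,b_n}(t-x)\bigr)=\kappa^l_\infty\bigl(L\ten(M_m\tstar H_{n,b_n})(t-x)\bigr).
\]
Lemma~\ref{FaltHyp} then evaluates $\psi_\infty(M_m\tstar H_{n,b_n})\simeq\bigl(J^{m-1}(a_m,m)\ten J^{n-1}(b_n,n)\bigr)(-\lfloor a_m+b_n\rfloor)$, and a double application of Remark~\ref{Jordandec}(i) expresses the right hand side of the displayed equation as an explicit expression in the numbers $\nu^k_{\infty,a,l-1}(L)$, $(a_m,m)$ and $(b_n,n)$.

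To conclude, I would run the same $\kappa^l_\infty$--calculation on the candidate
\[
W:=\bigoplus_{(k,a,l)}\bigl(J^k(a,l)^{\nu^k_{\infty,a,l-1}(L)}\ten J^{m-1}(a_m,m)\bigr)(-\lfloor a+a_m\rfloor)
\]
tensored with $J^{n-1}(b_n,n)$, again via Remark~\ref{Jordandec}(i): by the chosen genericity of $b_n$, the two ways of extracting the Tate twist (one floor on the $(k,a,l)$--side, one floor on the hypergeometric side) produce the same total shift as the single $\lfloor a_m+b_n\rfloor$ on the other side, so both decompositions give the same local invariants for every admissible $(n,b_n,l)$. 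Remark~\ref{Jordandec}(ii) then forces $\psi_\infty(L\tstar M_m)\simeq W$. The hard part will be precisely this Tate--twist bookkeeping: one has to verify the associativity identity that, under the genericity conditions on $b_n$, the iterated tensor decomposition of Remark~\ref{Jordandec}(i) is insensitive to the order in which the three Jordan blocks $J^k(a,l)$, $J^{m-1}(a_m,m)$ and $J^{n-1}(b_n,n)$ are multiplied, modulo the matching floor corrections. Once this is established the rest of the argument is essentially formal.
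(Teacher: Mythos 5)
Your overall strategy --- test $L\tstar M_m$ against rank-$n$ hypergeometrics $H_{n,b_n}$, swap the convolution through Corollary~\ref{KunF}(ii), evaluate the hypergeometric side by Lemma~\ref{FaltHyp}, and pin down the Jordan structure by Remark~\ref{Jordandec}(ii) --- is exactly the paper's strategy. But there is a genuine gap in how you choose the test objects. You impose that $b_n+a$ and $b_n+a_m+a$ are never integers for any residue $a$ of $\psi_\infty(L)$. Recall that $\kappa^l_\infty(W)=\nu^l_{\infty,0,\prim}(W)$ sees only the \emph{unipotent} (residue-$0$) part of the nearby cycles. The residues at $\infty$ of $(L\tstar M_m)\ten H_{n,b_n}(t-x)$ are $c+b_n \bmod \ZZ$ with $c$ a residue of $L\tstar M_m$ (and, on the other side, $a+a_m+b_n \bmod\ZZ$), so under your genericity assumption every $\kappa^l_\infty$ you compute is identically zero on both sides. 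The identity you obtain is then vacuous, and Remark~\ref{Jordandec}(ii) --- whose recursion requires the values $\kappa^l_\infty(V\ten L(n,b_n))$ precisely at the ``resonant'' parameters $b_n\equiv -c\pmod\ZZ$ --- cannot be invoked to conclude $\psi_\infty(L\tstar M_m)\simeq W$. In short, the genericity is placed on the one parameter that must \emph{not} be generic.

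The paper resolves this by decoupling the two roles of $N_n$: the residue $b_n$ at $\infty$ is only required to satisfy $a_m+b_n\notin\ZZ$ (so that $M_m\star N_n=M_m\tstar N_n$ by Corollary~\ref{KunF}(i)) and otherwise ranges over all values, including the resonant ones needed for Remark~\ref{Jordandec}(ii); the genericity needed to rule out skyscraper summands in $(L\tstar M_m)\star N_n$ (hence to get $\star=\tstar$ and the associativity feeding Corollary~\ref{KunF}(ii)) is instead imposed on the residue of $N_n$ at its \emph{finite} singularity $0$, via Theorem~\ref{delta}, together with a separate associativity argument for the Kummer case $n=1$ which uses that $L$ is not hypergeometric. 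If you restructure your choice of $H_{n,b_n}$ along these lines --- arbitrary $b_n$ with $a_m+b_n\notin\ZZ$ at infinity, generic exponent at $0$ --- the rest of your argument (including the Tate-twist bookkeeping you flag, which is handled by the two cases $0<a_m+b_n<1$ and $1<a_m+b_n<2$ of Lemma~\ref{FaltHyp}) goes through as in the paper.
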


  
\begin{proof}
The claim is settled if $M_m=M_1$ a Kummer sheaf and therefore hypergeometric or if $L$ is hypergeometric by the previous result. Let now $L$ be non-hypergeometric and $m>1.$ Let $N_n$ be as in Lemma~\ref{FaltHyp} such that
  $a_m+b_n \not \in \ZZ $.
Then $M_m\star N_n=M_m\tstar N_n$ by Corollary~\ref{KunF}.
%
%
If  $N_n=N_1$ is a Kummer sheaf then
\begin{eqnarray*}
  (L\star M_m)\star N_1 &=& L\star (M_m\star N_1)\\ 
                 &=& L\star (M_m\tstar N_1)\\ 
                  &=& L\tstar (M_m\tstar N_1),\\ 
   \end{eqnarray*}
where the second equality uses that $M_m$ is not a Kummer sheaf and the third equality uses
that $M_m\star N_1=M_m\tstar N_1$ is a parabolically rigid irreducible hypergeometric Hodge module and $L$ is not hypergeometric.

On the right hand side of the last equation there appears no  skyscraper sheaf as a direct summand.
Hence  $(L\star M_m)\star N_1=(L\star M_m)\tstar N_1=(L\tstar M_m)\tstar N_1$, where
$L\star M_m =L\tstar M_m$ by Corollary~\ref{KunF}.
If $n>1$ we choose a $N_n$ with a residue $\mu$ at $0$ such that
$-\mu$ is not a residue of $L\tstar M_m$.
Hence by Theorem~\ref{delta}
$$(L\tstar M_m)\star N_n=(L\tstar M_m)\tstar N_n,$$
also for $n>1.$

Let  $H_{m+n+1-2k}$ be hypergeometric with 
 $$\psi_\infty(H_{m+n+1-2k})=J^{m+n-2k}(a_m+b_n-\lfloor a_m+b_n \rfloor,m+n+1-2k).$$
 By Corollary~\ref{KunF} and Lemma~\ref{FaltHyp}
if $0<a_m+b_n<1,$ 
             \begin{eqnarray*}
                   \kappa^l_\infty((L\tstar M_m) \ten N_n(t-x)) &=&  \kappa^l_\infty(L \ten (M_m \star N_n)(t-x)) \\
                  &=&  \sum_{k=1}^{\min\{m,n\}} \kappa^l_\infty((L \ten H_{m+n+1-2k} (t-x)) (-k+1)) \\
                  &=&  \kappa^l_\infty(L \ten (M_m \ten  N_n)) \\
                  &=&  \kappa^l_\infty((L \ten M_m) \ten  N_n) \\
  \end{eqnarray*}
and if $1<a_m+b_n<2,$
 \begin{eqnarray*}
                   \kappa^l_\infty((L\tstar M_m) \ten N_n(t-x)) &=&  \kappa^l_\infty(L \ten (M_m \star N_n)(t-x)) \\
                  &=&  \sum_{k=1}^{\min\{m,n\}} \kappa^l_\infty((L \ten H_{m+n+1-2k} (t-x)) (-k)) \\
                  &=&  \kappa^{l}_\infty(L \ten (M_m \ten  N_n)(-1)) \\
                  &=&  \kappa^{l}_\infty((L \ten M_m)(-1) \ten  N_n). \\
  \end{eqnarray*} 
 The claim follows now from  Remark~\ref{Jordandec}.

\end{proof}

 \begin{thm}  Let $V,L$ be the Hodge modules underlying 
  irreducible nonconstant  variations of complex polarized Hodge structures with 
  $$  \psi_{\infty}(V) \simeq \bigoplus_{(i,a,l)}   J^i_{}(a,l)^{ \nu^i_{\infty,a,l-1}(V)}\quad \textrm{ and }  
  \quad \psi_{\infty}(L) \simeq \bigoplus_{(j,b,m)}   J^j_{}(b,m)^{\nu^j_{\infty,b,m-1}(L)} .$$
  Then there is an isomorphism of 
  nilpotent orbits
   \begin{eqnarray*}
     \psi_\infty(V\tstar L) &\simeq &\bigoplus_{(i,j,a,b,l,m): a\neq 0,b \neq 0, a+b\neq 1} 
     J^i(a,l)^{ \nu^i_{\infty,a,l-1}(V)}  \ten J^j(b,m)^{\nu^j_{\infty,b,m-1}(L)}(-\lfloor a+b \rfloor)\\
     && \bigoplus_{(i,j,a,b,l,m): a\neq 0,b \neq 0, a+b=1}  \varphi\left(J^i(a,l)^{ \nu^i_{\infty,a,l-1}(V)} \ten J^j(b,m)^{\nu^j_{\infty,b,m-1}(L)}\right)(-1)\\
                         &&\bigoplus_{\quad \; (i,j,a,b,l,m):a=0,b \neq 0\; \quad} J^{i+1}(a,l+1)^{ \nu^i_{\infty,a,l-1}(V)} \ten J^j(b,m)^{\nu^j_{\infty,b,m-1}(L)}\\
                          &&\bigoplus_{\quad \;(i,j,a,b,l,m): a\neq 0,b =0\quad \;} J^i(a,l) ^{ \nu^i_{\infty,a,l-1}(V)}\ten J^{j+1}(b,m+1)^{\nu^j_{\infty,b,m-1}(L)}\\
                          &&\bigoplus_{\quad \;(i,j,a,b,l,m): a=0,b =0\quad \;} \varphi \left(J^{i+1}(a,l+1)^{ \nu^i_{\infty,a,l-1}(V)} \ten J^{j+1}(b,m+1)^{\nu^j_{\infty,b,m-1}(L)}\right)\\
                          &&\bigoplus_{\quad \quad\quad \quad (i,a,l)\quad \quad \quad \quad } J^{i}(a,l)^{ \nu^i_{\infty,a,l-1}(V)} \ten H^1_{{\rm par}}(L) \\
                           &&\bigoplus_{\quad \quad\quad \quad(j,b,m)\quad \quad\quad \quad} J^j(b,m)^{\nu^j_{\infty,b,m-1}(L)}\ten H^1_{\rm par}(V) \\
    \end{eqnarray*}
    where $\varphi(J^i(0,l)) :=J^{i-1}(0,l-1)$ where  $\varphi(J^i(a,l))= J^i(a,l)$ for $a\neq 0$ and the notion is 
    extended using direct sums, and where moreover 
    $$  J^i(a,l) \ten J^j(b,m)=
      J^{i+j}(a+b-\lfloor a+b \rfloor,l+m-1) \oplus \cdots \oplus J^{i+j+1-\min\{l,m\}}(a+b-\lfloor a+b \rfloor,l+m+1-2\min\{l,m\}).$$
 \end{thm}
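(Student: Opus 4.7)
The plan is to recover $\psi_\infty(V \tstar L)$ via the testing principle of Remark~\ref{Jordandec}(ii): the nilpotent orbit of a Hodge module at infinity is uniquely determined by the family of invariants $\kappa^l_\infty(W \otimes L(n,b)(t-x))$ as $n$ and $b$ vary. Thus the theorem reduces to computing each such invariant for $W = V \tstar L$ and matching it with the $\kappa$-values read off from the claimed right-hand side, then invoking uniqueness to recover the Jordan structure.

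For each pair $(n,b)$ I would choose a parabolically rigid hypergeometric Hodge module $M_m$ as in Lemma~\ref{FaltHyp} with $\psi_\infty(M_m) \simeq J^{m-1}(c,m)$, with $c$ and $m$ generic enough to avoid (a) unipotent blocks in both $V \otimes M_m(t-x)$ and $L \otimes M_m(t-x)$, and (b) the exceptional isomorphism of Theorem~\ref{delta}. Under these conditions Corollary~\ref{KunF} applies to the triple $(V,L,M_m)$ and gives the key associativity
$$\kappa^l_\infty((V \tstar L) \otimes M_m(t-x)) \;=\; \kappa^l_\infty(V \otimes (L \tstar M_m)(t-x)).$$
The right-hand side is tractable because Corollary~\ref{FaltHyp2} describes $\psi_\infty(L \tstar M_m)$ explicitly in terms of $\psi_\infty(L)$ and $\psi_\infty(M_m)$; expanding via the Jordan block tensor product formula of Remark~\ref{Jordandec}(i) and letting $(c,m)$ range produces explicit values for every $\kappa$-invariant of $V\tstar L$.

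The next task is to match these computed values with the four kinds of summands in the claimed formula. The generic terms $J^i(a,l) \otimes J^j(b,m)(-\lfloor a+b\rfloor)$ for $a,b\neq 0$ and $a+b\neq 1$ arise directly from the tensor products produced by Corollary~\ref{FaltHyp2}. The $a=0$ and $b=0$ corrections $J^{i+1}(0,l+1)$ and $J^{j+1}(0,m+1)$ come from the one-step enlargement of unipotent blocks observed in Corollary~\ref{FaltHyp2}, reflecting the standard situation of \cite{DS}. The $a+b=1$ stratum produces the $\varphi$-reduction together with the Tate twist $(-1)$: both encode the collapse of the unipotent part of $J^i(a,l) \otimes J^j(b,m)$ under passage to the middle image, together with the skyscraper correction of Theorem~\ref{delta}. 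Finally, the $H^1_{\rm par}(V)$ and $H^1_{\rm par}(L)$ contributions appear on the left of the formula in Theorem~\ref{cohoFalt} and, when the cross terms $h^i(H^1_{\rm par}(V))\cdot\kappa^{l-i}_\infty(L)$ and $\kappa^{i-1}_\infty(V)\cdot h^{l+1-i}(H^1_{\rm par}(L))$ are reorganized, match the last two summands of the stated decomposition exactly.

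The main obstacle is the simultaneous bookkeeping of the four exceptional strata $a=0$, $b=0$, $a+b=1$, and $a=b=0$, together with the Tate twists. One must verify that the shifts introduced by Corollary~\ref{FaltHyp2} are compatible with the $(-1)$-twist coming from the skyscraper summand of Theorem~\ref{delta}, confirm that the operator $\varphi$ correctly records the loss of unipotent rank under the middle image, and check that the recursive inversion of the relation \textit{Jordan decomposition} $\leftrightarrow$ \textit{$\kappa$-invariants} used in the proof of Remark~\ref{Jordandec}(ii) reproduces precisely the stated direct sum. Once all auxiliary hypergeometrics $M_m$ have been varied to probe every $(n,b)$, uniqueness from Remark~\ref{Jordandec}(ii) closes the argument.
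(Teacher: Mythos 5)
There is a genuine gap at the first step. You propose to apply Corollary~\ref{KunF}(ii) to the triple $(V,L,M_m)$ after choosing the auxiliary hypergeometric $M_m$ ``generic enough,'' but the hypotheses of that corollary are conditions on $V$ and $L$ themselves: they must be parabolically rigid, must have no unipotent Jordan block at $\infty$, and $V\otimes L(t-x)$ must have no unipotent Jordan block at $\infty$. No choice of $M_m$ can arrange this for the general $V,L$ of the theorem --- indeed the exceptional strata $a=0$, $b=0$, $a+b=1$ in the claimed formula exist precisely because these hypotheses fail in general. Likewise the associativity $(V\tstar L)\tstar M_m = V\tstar(L\tstar M_m)$ that feeds into Corollary~\ref{KunF}(ii) is not automatic (cf.\ Remark~\ref{tstar}) and is only established in the paper under the rigidity/no-unipotent-block assumptions. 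So your testing argument via Remark~\ref{Jordandec}(ii) is valid only in the ``clean'' case, which is exactly the first half of the paper's proof; your plan offers no mechanism to get from there to the general case.

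The paper bridges this by a reduction you are missing: it convolves $V$ and $L$ with generic Kummer modules $L_{\chi_1}$, $L_{\chi_2}$ with $\mu_1,\mu_2\sim 1$, so that $V\star L_{\chi_1}$ and $L\star L_{\chi_2}$ become parabolically rigid without unipotent blocks at $\infty$ (Propositions~\ref{Lgeneric1} and~\ref{Lgeneric2}); it applies the clean formula to $W=(V\star L_{\chi_1})\star(L\star L_{\chi_2})$; and it then recovers $\psi_\infty(V\star L)$ from $W\star L_{\overline{\chi_1\chi_2}}=(V\star L)(-1)$ using the explicit block-transformation rule of Proposition~\ref{Lgeneric2}(ii). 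It is this final untwisting --- blocks of residue $2-\mu_1-\mu_2$ losing one rank, all others being shifted --- that \emph{produces} the $\varphi$-corrections, the Tate twists on the $a+b=1$ and $a=b=0$ strata, and the $H^1_{\mathrm{par}}(V)$, $H^1_{\mathrm{par}}(L)$ summands. In your write-up these terms are only asserted to ``match'' (and you misattribute the $a+b=1$ correction to the skyscraper of Theorem~\ref{delta}, which plays no role there); without the Kummer-twist reduction there is no derivation of them.
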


 \begin{proof}
   Assume first that  $V,L$ are parabolically rigid without unipotent Jordan block at $\infty$ such that
   $V\ten L(t-x)$ has also no unipotent Jordan block at $\infty$.
   By Corollary~\ref{FaltHyp2}  there exists for each $n, a_n\in (0,1)$ a parabolic rigid irreducible hypergeometric $H_m(a_m)$ such that
   $\psi_\infty(H_n(a_n))\simeq J^{n-1}(a_n,n)$  and
   $$ (V\tstar L)\tstar H_n(a_n) = V\tstar (L\tstar H_n(a_n)).$$
   Hence, by Corollary~\ref{KunF}(ii)
   $$\kappa^l_\infty((V\tstar L)\ten H_n(a_n)(t-x))=\kappa^l_\infty(V\ten  (L\tstar H_n(a_n)(t-x))).$$
   Since these numbers determine uniquely the vanishing cycle structure of $V\tstar L$ at infinity by Remark~\ref{Jordandec} we obtain
   using Corollary~\ref{FaltHyp2}
     \begin{eqnarray}\label{eqtensorvl}
      \psi_\infty(V\tstar L) &\simeq&\bigoplus_{(i,j,a,b,l,m)}  J^i(a,l)^{ \nu^i_{\infty,a,l-1}(V)}  \ten J^j(b,m)^{\nu^j_{\infty,b,m-1}(L)}(-\lfloor a+b \rfloor),
    \end{eqnarray} as claimed.
    
       In the general situation we proceed  as follows:
   Let  $L_{\chi_1}, L_{\chi_2}$ be generic and $\mu_1,\mu_2 \sim 1$. 
   Then $V\star L_{\chi_1}$ and  $L\star L_{\chi_2}$ are parabolically rigid by Proposition~\ref{Lgeneric1} without unipotent Jordan block at $\infty$
   by Proposition~\ref{Lgeneric2} (i).
 Hence, by Corollary~\ref{Lgeneric2}(ii)
  \begin{eqnarray*}
   \psi_\infty(V\star L_{\chi_1})&\simeq &
   \bigoplus_{ (i,0,l)} J^{i+1} (1-\mu_1,l+1) ^{ \nu^i_{\infty,0,l-1}(V)}\bigoplus  _{ (i,a,l): a \neq 0 } J^{i} (a+1-\mu_1,l)^{ \nu^i_{\infty,a,l-1}(V)} \\ 
 &&   \bigoplus J^0(1-\mu_1,1)   \otimes H^1_{par}(V)
  \end{eqnarray*}
  and 
  \begin{eqnarray*}
   \psi_\infty(L\star L_{\chi_2})&\simeq&
   \bigoplus_{ (j,0,m)  } J^{j+1} (1-\mu_2,m+1) ^{\nu^j_{\infty,0,m-1}(L)}\bigoplus  _{ (j,b,m): b \neq 0} J^{j} (b+1-\mu_2,m)^{\nu^j_{\infty,b,m-1}(L)} \\ 
 &&   \bigoplus J^0(1-\mu_2,1)   \otimes H^1_{\rm par}(L).
  \end{eqnarray*}
By what was said above, the assumptions for Equation~\eqref{eqtensorvl} are now fulfilled with $V$ replaced by 
$V\star L_{\chi_1}$ and with $L$ replaced by $L\star L_{\chi_2}$ which proves the claim
of  the theorem for   $W:=(V\star L_{\chi_1}) \star (L\star L_{\chi_2}).$ 
Thus
\begin{eqnarray*}
\psi_\infty(W)&\simeq & \bigoplus_{ (i,j,0,0,l,m)} J^{i+1} (1-\mu_1,l+1)^{ \nu^i_{\infty,0,l-1}(V)}\  \ten J^{j+1} (1-\mu_2,m+1)^{\nu^j_{\infty,0,m-1}(L)} \\
                   && \bigoplus_{ (i,j,0,b,l,m):b \neq 0}  J^{i+1} (1-\mu_1,l+1)^{ \nu^i_{\infty,0,l-1}(V)}\  \ten J^{j} (b+1-\mu_2,m) ^{\nu^j_{\infty,b,m-1}(L)}\\
                   && \bigoplus_{ (i,0,l)}  J^{i+1} (1-\mu_1,l+1) ^{ \nu^i_{\infty,0,l-1}(V)} \ten J^0(1-\mu_2,1)   \otimes H^1_{{\rm par}}(L) \\
                  &&\\
                   &&   \bigoplus_{ (i,j,a,0,l,m):a\neq 0 } J^{i} (a+1-\mu_1,l)^{ \nu^i_{\infty,a,l-1}(V)}  \ten J^{j+1} (1-\mu_2,m+1)^{\nu^j_{\infty,0,m-1}(L)}  \\
                   && \bigoplus_{ (i,j,a,b,l,m):a\neq 0, b \neq 0}  J^{i} (a+1-\mu_1,l)^{ \nu^i_{\infty,a,l-1}(V)}  \ten J^{j} (b+1-\mu_2,m) ^{\nu^j_{\infty,b,m-1}(L)}\\
                   && \bigoplus_{ (i,a,l):a\neq 0}  J^{i} (a+1-\mu_1,l) ^{ \nu^i_{\infty,a,l-1}(V)}\ten J^0(1-\mu_2,1)   \otimes H^1_{\rm par}(L) \\
                  &&\\
                   && \bigoplus_{ (j,0,m)} J^0(1-\mu_1,1) \otimes H^1_{\rm par}(V) \ten J^{j+1} (1-\mu_2,m+1)^{\nu^j_{\infty,0,m-1}(L)} \\
                   && \bigoplus_{ (j,b,m):b \neq 0}  J^0(1-\mu_1,1) \otimes H^1_{\rm par }(V) \ten J^{j} (b+1-\mu_2,m)^{\nu^j_{\infty,b,m-1}(L)} \\
                   && \bigoplus_{ } J^0(1-\mu_1,1)\otimes H^1_{\rm par}(V)\ten J^0(1-\mu_2,1)   \otimes H^1_{\rm par}(L). \\
\end{eqnarray*}

    Hence, since $\mu_1, \mu_2 \sim 1$ are generic 
  the only residues (contained in $[0,1)$) that contribute to $\psi_\infty(W)$  are by Formula~\eqref{eqtensorvl}
  \[ a+b+1-\mu_1+1-\mu_2-[a+b], \quad 
   a+1-\mu_1+1-\mu_2,\quad  b+1-\mu_1+1-\mu_2,\quad  1-\mu_1+1-\mu_2\]
  with $a$, resp. $b$,  a non-zero residue of $V$, resp. $L$, at infinity.

  Using commutativity and  associativity of the middle convolution together with Theorem~\ref{delta}
  one finds    \[ W\star L_{\overline{\chi_1\chi_2}}
       =(V\star L)(-1) 
  .  \]
  By Proposition~\ref{Lgeneric2}(ii) we deduce that 
   a Jordan block $J^i(c,l)$ of $\psi_\infty(W)$  is transformed 
   to a Jordan block  of $\psi_\infty(W\star L_{\overline{\chi_1\chi_2}})$
   as follows:
%
%
%
  \[   J^i(c,l)\mapsto \left\{ \begin{array}{cc}
                                          J^{i}(0,l-1), & c=2-\mu_1-\mu_2    \\
                                           J^{i+1}(c+\mu_1+\mu_2-2,l), & c\neq 0, c\neq 2-\mu_1-\mu_2   \\
                                       \end{array} \right.\]
     which implies the expression for  $\psi_\infty(V\star L)= \psi_\infty(V\tstar L)$ in the theorem.   \end{proof}

\bibliographystyle{amsplain}
\bibliography{DR18}
\end{document}